\newtheorem{thm}{Theorem}[section]
\newtheorem{cor}[thm]{Corollary}
\newtheorem{lem}[thm]{Lemma}
\newtheorem{prop}[thm]{Proposition}
\newtheorem{defn}[thm]{Definition}
\newtheorem{rem}[thm]{Remark}
\newtheorem{exa}[thm]{Example}
\newcommand{\Trace}{\mathrm{Trace}}
\newcommand{\Nm}{\mathrm{N}}
\newcommand{\Sym}{\mathrm{Sym}}
\newcommand{\Hom}{\mathrm{Hom}}
\newcommand{\Fq}{\mathbb F_q}
\newcommand{\CC}{\mathcal C}
\newcommand{\FFq}{{\bar{\mathbb F}_q}}
\newcommand{\QQ}{\bar{\mathbb Q}_\ell}
\newcommand{\R}{\mathrm R}
\newcommand{\Gal}{\mathrm {Gal}}
\newcommand{\AAA}{\mathbb A}
\newcommand{\PP}{\mathbb P}
\newcommand{\FF}{\mathcal F}
\newcommand{\GG}{\mathbb G}
\newcommand{\GGG}{\mathcal G}
\newcommand{\HH}{\mathrm H}
\newcommand{\HHH}{{\mathcal H}}
\newcommand{\LL}{{\mathcal L}}
\newcommand{\Tr}{\mathrm {Tr}}
\newcommand{\Spec}{\mathrm{Spec\:}}
\newcommand{\Dbc}{{\mathcal D}^b_c}
\newcommand{\Frob}{\mathrm{Frob}}
\newcommand{\Swan}{\mathrm{Swan}}
\newcommand{\Sha}{{\mathcal Sh}(X,\QQ)}
\newcommand{\Shg}{{\mathcal Sh}(\GG_{m,k},\QQ)}
\newcommand{\Sh}{\mathcal Sh}
\newcommand{\Perv}{{\mathcal P}erv}
\newcommand{\PPP}{\mathcal P}
\newcommand{\GL}{\mathrm{GL}}
\newcommand{\FT}{\mathrm{FT}}
\title{Rationality of Trace and Norm L-functions}
\author{Antonio Rojas-Le\'on}
\address{Departamanto de \'Algebra,
Universidad de Sevilla, Apdo 1160, 41080 Sevilla, Spain}
\address{E-mail: arojas@us.es}
\thanks{Partially supported by P08-FQM-03894 (Junta de Andaluc\'{\i}a), MTM2007-66929 and FEDER}
\begin{document}

\renewcommand{\thefootnote}{}
\footnote{Mathematics Subject Classification: 14F20, 11S40, 11L07}

\begin{abstract}
For a given $\ell$-adic sheaf $\FF$ on a commutative algebraic group $G$ over a finite field $k$ and an integer $r\geq 1$ we define the $r$-th local norm $L$-function of $\FF$ at a point $t\in G(k)$ and prove its rationality. This function gives information on the sum of the local Frobenius traces of $\FF$ over the points of $G(k_r)$ (where $k_r$ is the extension of degree $r$ of $k$) with norm $t$. For $G$ the one-dimensional affine line or the torus, these sums can in turn be used to estimate the number of rational points on curves or the absolute value of exponential sums which are invariant under a large group of translations or homotheties.
\end{abstract}

\maketitle

\section{Introduction}

Let $k=\Fq$ be a finite field of characteristic $p$ and $V\subseteq\AAA^{n}_k$ a geometrically irreducible affine variety of dimension $r$. To fix ideas, suppose that its $\ell$-adic cohomology groups $\HH^i_c(V\otimes\bar k,\QQ)$ vanish for $i\neq r,2r$ (e.g. $V$ a smooth hypersurface with smooth section at infinity). If $\dim\HH^r_c(V\otimes\bar k,\QQ)=d$, the famous Deligne-Weil bound \cite[Th\'eor\`eme 8.1]{deligne1974conjecture} gives an estimate
$$
\left|\#V(k)-q^r\right|\leq d\cdot q^\frac{r}{2}
$$
for the number of rational points of $V$. Suppose now that we have a large finite abelian group $G$ acting on $V$. Then $G$ has an induced Frobenius invariant action on $\HH^r_c(V\otimes\bar k,\QQ)$, so this vector space splits as a direct sum $\bigoplus_\psi\HH^r_c(V\otimes\bar k,\QQ)^\psi$, where the sum is indexed by the set of characters of $G$ and $\HH^r_c(V\otimes\bar k,\QQ)^\psi$ is the subspace of $\HH^r_c(V\otimes\bar k,\QQ)$ on which $G$ acts via $\psi$. In particular, we get a decomposition
$$
\#V(k)-q^r=(-1)^r\Tr(\Frob_k|\HH^r_c(V\otimes\bar k,\QQ))=(-1)^r\sum_\psi\Tr(\Frob_k|\HH^r_c(V\otimes\bar k,\QQ)^\psi)
$$

If the situation is generic enough, one would expect that there should be some cancellation among the terms of the sum, thus giving a significative improvement of the Deligne-Weil bound if $\#G$ (and thus the number of terms in the sum) is large.
For instance, if $k'={\mathbb F}_{q'}$ is a subfield of $k$, the Artin-Schreier curve $y^{q'}-y=f(x)$ for $f\in k[x]$ has a natural action of the additive group $k'$ (where $t$ acts by $(x,y)\mapsto (x,y+t)$. In \cite{rlwan2010} this fact was used to give an improvement of the Weil bound for the curve of order of magnitude $\sqrt{q'}$.

In the same vein, suppose that $\FF$ is an $\ell$-adic sheaf on $\AAA^1_k$ which is invariant under translation by elements of $k'$. Equivalently, $\FF$ is the pull-back by the \'etale map $x\mapsto x^{q'}-x$ of a sheaf $\GGG$ on $\AAA^1_k$. Then 
$$
\sum_{t\in k}\Tr(\Frob_{k,t}|\FF_{\bar t})=\sum_{t\in k}\Tr(\Frob_{k,t^{q'}-t}|\GGG_{\bar t^{q'}-\bar t})=q'\cdot\sum_{\Tr_{k/k'}(u)=0}\Tr(\Frob_{k,u}|\GGG_{\bar u})
$$
Similarly, if $\FF$ on $\GG_{m,k}$ is invariant under the group of homotheties with ratios in $k'^\star$ (so $\FF$ is the pull-back of a sheaf $\GGG$ on $\GG_{m,k}$ under the $(q'-1)$-th power map), we can write
$$
\sum_{t\in k^\star}\Tr(\Frob_{k,t}|\FF_{\bar t})=(q'-1)\cdot\sum_{\Nm_{k/k'}(u)=1}\Tr(\Frob_{k,u}|\GGG_{\bar u})
$$
If $\FF$ is pure of weight $0$ and has no geometrically constant components, the Weil bound for the sum has order $O(d\sqrt q)$, where $d=\dim\HH^1_c(\AAA^1_{\bar k},\FF)$ (resp. $\dim\HH^1_c(\GG_{m,\bar k},\FF)$). On the other hand, for the right hand side we expect an estimate of the form $O(e\sqrt{q'}^{[k:k']+1})$ where $e=\dim\HH^1_c(\AAA^1_{\bar k},\GGG)$. Now since $\FF$ is the pull-back of $\GGG$ under a map of degree $\sim q'$, for general $\FF$ we should have $d\sim q'e$. This gives an estimate $O(d\sqrt{q'}^{[k:k']-1})$ for the second sum, which improves the Weil estimate by a factor of $\sqrt{q'}$. 

For instance, if $\psi:k\to\QQ^\star$ (respectively $\chi:k^\star\to\QQ^\star$) is an additive (resp. multiplicative) character of $k$, and $f\in k[x]$ is a polynomial of the form $g(x^{q'}-x)$ or of the form $g(x^{q'-1})$ with $g\in k[x]$ of degree $d$, the classical Weil bound for the exponential sum $\sum_{x\in k}\psi(f(x))$ (resp. $\sum_{x\in k}\chi(f(x))$) is $\cong (dq'-1)\sqrt{q}\cong dq'^{\frac{[k:k']}{2}+1}$. Writing it as a sum over a ``trace set'' or a ``norm set'' we should obtain (for generic $f$) an estimate of the form $C_dq'^{\frac{[k:k']+1}{2}}$, where $C_d$ depends only on $d$ and $[k:k']$. See the examples in sections 6 and 7 for explicit conditions on $f$ that imply this estimate. 

So we reduce the Frobenius trace sum of $\FF$ on $\AAA^1$ (or on $\GG_m$) to a sum of Frobenius traces of a simpler object $\GGG$ but on a more complicated space, defined by non-algebraic equations of the form $\Tr_{k/k'}(u)=\lambda$ or $\Nm_{k/k'}(u)=\mu$. Sums of this type have been previously studied in the literature (cf. \cite{katz1993estimates}, \cite{katz1995note}, \cite{chai2004character}, \cite{li2006character}) mainly using the method of Weil descent. This method consists of identifying the set of elements of $k$ with a given trace or norm over $k'$ with the set of rational points on a $([k:k']-1)$-dimensional variety over $k'$, and thus reducing the sum to a more classical sum over the rational points of a variety. 

One disadvantage of this method is that one may lose some information when identifying those two sets. As a rather crude example of this phenomenon, consider the sum of the constant $1$ over the set of elements of $k$ with norm $1$ over $k'$. This sum is obviously equal to $\frac{q-1}{q'-1}=1+q'+\cdots+q'^{n-1}$, where $n=[k:k']$. When applying Weil descent, the given set is identified with an $(n-1)$-dimensional torus over $k'$, where $n=[k:k']$. So its cohomology has dimension ${{n-1}\choose{j}}$ and weight $2j$ in degree $j+n-1$ for every $j=0,\ldots,n-1$, and we obtain an estimate $\sum_{j=0}^{n-1}{{n-1}\choose{j}}q'^j=(1+q')^{n-1}$ which is worse than the actual value $\sum_{j=0}^{n-1} q'^j $. See remark \ref{compare} for a more elaborated example of this issue. 

In this article we introduce another method to systematically study these kinds of sums. For a given $\ell$-adic sheaf (or, more generally, a derived category object) $\FF$ on a geometrically connected commutative algebraic group $G$ over $k$, an integer $m\geq 1$ and a point $t\in G(k_m)$ (where $k_m$ is the extension of $k$ of degree $m$ in a fixed algebraic closure $\bar k$) we define the $r$-th local \emph{norm $L$-function} of $\FF$ at $t$ as
$$
L^{\Nm,r}(\FF,k_m,t;T):=\exp\sum_{s\geq 1}f^{\Nm,r}_\FF(k_{ms},t)\frac{T^s}{s}
$$
where
$$
f^{\Nm,r}_\FF(k_m,t):=\sum_{\Nm_{k_{mr}/k_m}(u)=t}\Tr(\Frob_{k_{mr},u}|\FF_{\bar u})
$$
and $\Nm_{k_{mr}/k_m}:G(k_{mr})\to G(k_m)$ is the norm map.

These functions can be used to estimate sums defined on sets given by trace and norm conditions in the same way that classical $L$-functions are used to obtain information about usual sums over the set of rational points of a variety. The main result of this article is the fact that these functions are rational:

\begin{thm}
 For every object $\FF\in\Dbc(G,\QQ)$, every $m\geq 1$ and every $t\in G(k_m)$, the $r$-th norm $L$-function $L^{\Nm,r}(\FF,k_m,t;T)$ is rational. If $\FF$ is mixed of integral weights, all its reciprocal roots and poles are pure of integral $q^m$-weight.
\end{thm}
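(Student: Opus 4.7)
The plan is to realize $f^{\Nm,r}_\FF(k_{ms}, t)$ cohomologically via a generalized Grothendieck-Lefschetz trace formula for a twisted Frobenius on the $r$-fold product of $G$. Working over $k_m$, let $a : G^r \to G$ be the sum morphism and $c : G^r \to G^r$ the cyclic shift of order $r$; both are $k_m$-morphisms, and $a \circ c = a$ since $G$ is commutative. Set $X := a^{-1}(t) \subseteq G^r$, a $k_m$-subscheme preserved by $c$, and consider $\FF^{\boxtimes r}|_X$, which inherits a canonical $c$-equivariant structure. Writing $F$ for the $q^m$-Frobenius, I would first observe that the $\bar k$-points of $X$ fixed by the twisted endomorphism $c \circ F^s$ are parametrized by $\{u \in G(k_{msr}) : \Nm_{k_{msr}/k_{ms}}(u) = t\}$ via $u \mapsto (u, F^s(u), \ldots, F^{(r-1)s}(u))$: such a tuple is $cF^s$-fixed precisely when $F^{rs}(u) = u$, that is, when $u$ lies in $G(k_{msr})$.

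At each such fixed point $\vec x$, the twisted Frobenius $cF^s$ acts on the stalk $\bigotimes_{i=0}^{r-1} \FF_{F^{is}(u)}$; applying the elementary identity $\Tr(c \cdot A^{\otimes r} \mid V^{\otimes r}) = \Tr(A^r \mid V)$ with $A = F^s$ and $V = \FF_{\bar u}$ shows that the local trace equals $\Tr(\Frob_{k_{msr}, u} \mid \FF_{\bar u})$. The generalized Grothendieck-Lefschetz trace formula for Frobenius twisted by a finite-order automorphism then yields
\[
f^{\Nm,r}_\FF(k_{ms}, t) = \sum_i (-1)^i \Tr\bigl(cF^s \,\big|\, \HH^i_c(X \otimes \bar k, \FF^{\boxtimes r})\bigr).
\]
Diagonalizing the commuting actions of $c$ and $F$ on each $\HH^i_c$ writes the right-hand side as a finite $\QQ$-linear combination of $\lambda^s$, with $\lambda$ running over $F$-eigenvalues, whence $L^{\Nm,r}(\FF, k_m, t; T) = \prod_\lambda (1 - \lambda T)^{-e_\lambda}$ for exponents $e_\lambda$ expressed as alternating sums of $c$-traces on $F$-eigenspaces. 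Rationality over $\QQ(T)$ then reduces to checking that each $e_\lambda \in \Z$, which I expect to follow from the integrality of Lefschetz numbers of the finite-order automorphism $c$ together with the Galois-equivariance of the joint $(c, F)$-spectrum over $\Q$.

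For the weight assertion, if $\FF$ is mixed of integer weights then so is $\FF^{\boxtimes r}|_X$, and Deligne's weight theorem forces the $F$-eigenvalues on $\HH^i_c(X \otimes \bar k, \FF^{\boxtimes r})$ to be pure of integer $q^m$-weight; since $c$ has finite order, its eigenvalues on cohomology are roots of unity (hence of $q^m$-weight zero), so the eigenvalues of $c \circ F$—and therefore the reciprocal roots and poles of $L^{\Nm,r}$—remain pure of integer $q^m$-weight. The main technical obstacle will be the integrality argument for the exponents $e_\lambda$: the formula produces $L$ a priori as a formal exponential whose logarithm has algebraic-number coefficients coming from $c$-traces, and one must invoke the standard integrality of Lefschetz numbers together with Galois symmetry of the joint spectrum to collapse these contributions to genuine integers.
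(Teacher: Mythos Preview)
Your route is genuinely different from the paper's. The paper never writes down a twisted Lefschetz formula; instead it constructs, for each $K\in\Dbc(G,\QQ)$, an explicit element $K^{[\ast r]}\in K_0(G,\QQ)$ (the \emph{convolution Adams power}, built from the $\mathfrak S_r$-isotypic pieces of $K^{\ast r}$) and proves that its Frobenius trace function is exactly $f_K^{\Nm,r}$. Rationality is then immediate because representable functions have rational local $L$-functions. The matching of trace functions is done by Fourier analysis on the finite group $G(k)$: it suffices to check equality after pairing against every character $\chi$, and both sides then reduce to the classical identity $\Tr(\phi^r\mid V)=\Tr(\phi\mid V^{[r]})$ applied to $V=\R\Gamma_c(G_{\bar k},K\otimes\LL_\chi)$.

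Your twisted-Frobenius identity
\[
f^{\Nm,r}_\FF(k_{ms},t)=\sum_i(-1)^i\Tr\bigl(cF^s\,\big|\,\HH^i_c(X_{\bar k},\FF^{\boxtimes r})\bigr)
\]
is correct and elegant, and the fixed-point and local-trace computations are fine. The real issue is exactly the one you flag at the end: from $\Tr(cF^s\mid W)=\sum_\lambda e_\lambda\,\lambda^s$ with $e_\lambda=\Tr(c\mid W^{F=\lambda})$ you need $e_\lambda\in\Z$, and the justification you sketch does not supply this. The eigenvalues of $c$ are $r$-th roots of unity, so a priori $e_\lambda\in\Z[\zeta_r]$; ``integrality of Lefschetz numbers'' only controls the total $\Tr(c\mid W)$, not its restriction to an individual $F$-eigenspace, and there is no $\Q$-structure on $\HH^i_c$ that would make ``Galois-equivariance of the joint $(c,F)$-spectrum over $\Q$'' meaningful (the cohomology is a $\QQ$-vector space, $\QQ$ is algebraically closed, and the $F$-eigenvalues need not lie in any fixed number field). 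One can write down $2$-dimensional $(c,F)$-modules with $e_\lambda=\zeta_r\notin\Z$, so some geometric input is required to rule this out.

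The clean fix is to use more symmetry than you do: the full symmetric group $\mathfrak S_r$, not just the cyclic subgroup $\langle c\rangle$, acts on $\FF^{\boxtimes r}|_X$ and hence on each $\HH^i_c$, commuting with $F$. Thus every generalized $F$-eigenspace $W^{F=\lambda}$ is an $\mathfrak S_r$-module, and $e_\lambda=\Tr(c\mid W^{F=\lambda})$ is the value of an $\mathfrak S_r$-character at an $r$-cycle, hence a rational integer. Once you say this, your formula becomes
\[
L^{\Nm,r}(\FF,k_m,t;T)=\prod_{\rho\in\widehat{\mathfrak S_r}}\det\bigl(1-T\cdot\Frob_{k_m}\,\big|\,(\R(\ast\rho)\FF)_{\bar t}\bigr)^{-\chi_\rho(c)},
\]
which is visibly rational, and the virtual representation $\sum_\rho\chi_\rho(c)\,\rho$ is exactly the $\tau_r$ the paper uses to define $K^{[\ast r]}$. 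So the two approaches converge at this point; the paper's argument can be read as an independent verification, via character theory on $G(k)$, that this object has the right trace function, while yours reaches the same object through the Deligne--Lusztig style fixed-point count. Your weight argument is fine as stated.
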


We give some explicit estimates in the cases where $G=\AAA^1_k$ or $G=\GG_{m,k}$. In order to obtain good estimates for the sums, we need information on the degree and the weights of the roots and poles of these functions. We will see that, in both cases, there are special objects (extensions of Artin-Schreier sheaves in the additive case, extensions of Kummer sheaves in the multiplicative case) for which the weights reach their maximal value. For these objects there are explicit formulas for the trace and norm $L$-functions, so they can be easily controlled. For the remaining objects, there are good estimates for the weights of the reciprocal roots and poles of the $L$-functions at $t$ for all $t$ in a certain dense open subset of $G$ that depends on $r$, which can be explicitely computed in some cases. In many examples we will also be able to obtain explicit bounds for the total degree of the $L$-functions.

The additive and multiplicative cases can be studied in parallel. However, in the additive case there is a great advantage thanks to the existence of the $\ell$-adic Fourier transform. This allows to reduce the study of the trace $L$-functions to the study of the Fourier transform of the object $\FF$, and more precisely of its geometric monodromy (section 6). In the multiplicative case we lack this shortcut, and instead we rely on recent work by Katz \cite{katz2010mellin} on the tensor category of perverse sheaves on $\GG_{m,k}$ under convolution in order to obtain explicit results for some important examples (section 7).

The author would like to thank the referee for his careful reading of an earlier version of the article and his many useful suggestions for improvement.

\section{$\QQ$-representable functions}

Let $k=\Fq$ be a finite field of characteristic $p>0$ and $\bar k=\FFq$ a fixed algebraic closure. For each positive integer $m$, we denote by $k_m={\mathbb F}_{q^m}$ the unique extension of $k$ of degree $m$ inside $\bar k$. Fix a prime $\ell\neq p$ and a field isomorphism $\iota:\QQ\to{\mathbb C}$. We will use this isomorphism to identify $\QQ$ and ${\mathbb C}$ without making any further mention to it. Let $X$ be a separated scheme of finite type over $k$. We define $\CC_X$ to be the set
$$
\CC_X:=\{f:\coprod_{m\geq 1} X(k_m)\to \QQ\}=\prod_{m\geq 1}\{f: X(k_m)\to\QQ\}
$$
of $\QQ$-valued functions defined on the disjoint union $\coprod_{m\geq 1}X(k_m)$. It is a commutative ring with the obvious point-wise operations.

Let ${\mathcal Sh}(X,\QQ)$ be the abelian category of constructible $\QQ$-sheaves on $X$, $\Dbc(X,\QQ)$ the corresponding derived category and $K_0(X,\QQ)$ its Grothendieck group. That is, the free abelian group generated by the isomorphism classes of elements of ${\mathcal Sh}(X,\QQ)$ with relations $[{\mathcal F}]=[{\mathcal G}]+[{\mathcal H}]$ for every short exact sequence $0\to{\mathcal G}\to{\mathcal F}\to{\mathcal H}\to 0$. It is also the Grothendieck group of $\Dbc(X,\QQ)$, and for every $K\in \Dbc(X,\QQ)$ we have $[K]=\sum_i(-1)^i[\HHH^i(K)]$.

From now on we will only consider sheaves and derived category objects which are mixed of integral $q$-weights (either with respect to the isomorphism $\iota$, or with respect to \emph{every} isomorphism $\QQ\to{\mathbb C}$). For every $\FF\in\Sha$ (or, more generally, in $\Dbc(X,\QQ)$) we define an element $f_\FF$ of $\CC_X$ in the following way (cf. \cite[1.1]{laumon1987transformation}): for every $m\geq 1$ and every $t\in X(k_m)$, $f_\FF(k_m,t):=\Tr(\Frob_{k_m,t}|\FF_{\bar t})$ is the trace of the action of a geometric Frobenius element at $t$ on the stalk of $\FF$ at a geometric point $\bar t$ over $t$. Given an exact sequence $0\to{\mathcal G}\to{\mathcal F}\to{\mathcal H}\to 0$ in $\Sha$ it is clear that $f_\FF=f_\GGG+f_\HHH$ (since taking stalks at a given geometric point is an exact functor), therefore the application $\FF\mapsto f_\FF$ extends to a homomorphism of abelian groups $\Phi:K_0(X,\QQ)\to\CC_X$, which is actually a homomorphism of rings if we endow $K_0(X,\QQ)$ with the multiplication defined by $[\FF]\times[\GGG]=[\FF\otimes\GGG]$ for sheaves $\FF$, $\GGG$ and extended by linearity. The homomorphism $\Phi$ is injective \cite[Th\'eor\`eme 1.1.2]{laumon1987transformation}.

\begin{defn}
 A function $f\in\CC_X$ is called \emph{$\QQ$-representable} if it is in the image of $\Phi$, that is, if there exists some (necessarily unique) $F\in K_0(X,\QQ)$ such that $f=f_F:=\Phi(F)$. In that case $f$ is said to be \emph{represented by} $F$. The set of all such functions is denoted by $\CC_{X,rep}$.
\end{defn}

Thus $\CC_{X,rep}$ is a subring of $\CC_X$ isomorphic to $K_0(X,\QQ)$. The following results are easy consequences of the definitions and Grothendieck's trace formula:

\begin{prop}\cite[1.1.1.4]{laumon1987transformation}
 For every $k$-morphism $\phi:X\to Y$ of separated schemes of finite type over $k$ and every $f\in\CC_{Y,rep}$ the function $\phi^\star f$ given by $\phi^\star f(k_m,t)=f(k_m,\phi(t))$ is in $\CC_{X,rep}$.
\end{prop}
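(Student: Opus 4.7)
The plan is to use the standard fact that the pullback functor $\phi^\star$ on $\ell$-adic sheaves is exact, so it descends to a group (indeed, ring) homomorphism $\phi^\star:K_0(Y,\QQ)\to K_0(X,\QQ)$, and then to show that this pullback on Grothendieck groups is compatible via $\Phi$ with the naive pullback of functions on $\coprod X(k_m)$.

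First I would reduce to the case where $f=f_\FF$ for a single $\FF\in\Sha$ (or $\Dbc(Y,\QQ)$). By definition any representable $f$ comes from some $F\in K_0(Y,\QQ)$, and by writing $F$ as a $\Z$-linear combination of classes of sheaves (or, from the derived viewpoint, using $[K]=\sum_i(-1)^i[\HHH^i(K)]$), linearity of both $\Phi$ and of the pullback operation $f\mapsto\phi^\star f$ reduces the claim to showing $\phi^\star f_\FF=f_{\phi^\star\FF}$ for an individual sheaf $\FF$.

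The core of the argument is then the pointwise identity. For every $m\geq 1$ and every $t\in X(k_m)$ with image $\phi(t)\in Y(k_m)$, pick a geometric point $\bar t$ over $t$; then $\overline{\phi(t)}:=\phi(\bar t)$ is a geometric point of $Y$ over $\phi(t)$. The standard identification of stalks of étale pullbacks gives a canonical isomorphism
$$
(\phi^\star\FF)_{\bar t}\isomto \FF_{\overline{\phi(t)}}.
$$
Because $\phi$ is a $k$-morphism, it is equivariant for the Frobenius action, so under this isomorphism the geometric Frobenius $\Frob_{k_m,t}$ acting on $(\phi^\star\FF)_{\bar t}$ is intertwined with $\Frob_{k_m,\phi(t)}$ acting on $\FF_{\overline{\phi(t)}}$. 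Taking traces yields
$$
f_{\phi^\star\FF}(k_m,t)=\Tr(\Frob_{k_m,t}|(\phi^\star\FF)_{\bar t})=\Tr(\Frob_{k_m,\phi(t)}|\FF_{\overline{\phi(t)}})=f_\FF(k_m,\phi(t))=(\phi^\star f_\FF)(k_m,t),
$$
which is exactly the required equality.

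There is essentially no serious obstacle: the content is the trivial functoriality of stalks and Frobenius under a $k$-morphism, together with the exactness of $\phi^\star$ on $\Sha$ (which ensures that pulling back descends to $K_0$, so the reduction to a single sheaf is legitimate). Preservation of integral mixedness of weights under $\phi^\star$ is automatic since pullback can only decrease weights, and under our conventions we need only note that $\phi^\star\FF$ lies again in the subcategory of mixed complexes with integral $q$-weights on $X$.
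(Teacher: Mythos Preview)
Your argument is correct and is exactly the ``easy consequence of the definitions'' that the paper alludes to (the paper gives no explicit proof, merely citing \cite[1.1.1.4]{laumon1987transformation}); the reduction to a single sheaf by linearity followed by the stalk identification $(\phi^\star\FF)_{\bar t}\cong\FF_{\phi(\bar t)}$ compatible with Frobenius is the standard and only reasonable route. One small quibble: your remark that ``pullback can only decrease weights'' is not the right justification here---for $\phi^\star$ the Frobenius eigenvalues on stalks are \emph{literally the same} via the isomorphism you wrote down, so integral mixedness is preserved on the nose, not merely bounded above.
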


\begin{prop}\cite[1.1.1.3]{laumon1987transformation}
For every $k$-morphism $\phi:X\to Y$ of separated schemes of finite type over $k$ and every $f\in\CC_{X,rep}$ the function $\phi_! f$ given by $\phi_! f(k_m,t)=\sum_{u\in X(k_m),\phi(u)=t}f(k_m,u)$ is in $\CC_{Y,rep}$.
\end{prop}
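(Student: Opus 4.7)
The plan is to construct, for any $f\in\CC_{X,rep}$ represented by a class $F\in K_0(X,\QQ)$, an explicit element of $K_0(Y,\QQ)$ that represents $\phi_!f$. The natural candidate is the class $\R\phi_!F$, where $\R\phi_!:\Dbc(X,\QQ)\to\Dbc(Y,\QQ)$ is the derived direct image with compact supports. Since $\R\phi_!$ is triangulated, it descends to a well-defined group homomorphism $K_0(X,\QQ)\to K_0(Y,\QQ)$ by sending $[\FF]$ to $\sum_i(-1)^i[\R^i\phi_!\FF]$. As $\Phi$ is additive on exact sequences and $\phi_!$ is visibly additive on $\CC_X$, it suffices to establish the identity
$$
f_{\R\phi_!\FF}(k_m,t)=\phi_!f_\FF(k_m,t)
$$
for a single object $\FF\in\Dbc(X,\QQ)$, every $m\geq 1$, and every $t\in Y(k_m)$.

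To prove this identity I would fix such a $t$, choose a geometric point $\bar t$ above it, and denote by $X_{\bar t}$ the corresponding geometric fiber. The first ingredient is proper base change, or more precisely the compatibility of $\R\phi_!$ with pullback along $\bar t\to Y$, which supplies a canonical $\Frob_{k_m,t}$-equivariant isomorphism
$$
(\R\phi_!\FF)_{\bar t}\isomto\R\Gamma_c(X_{\bar t},\FF|_{X_{\bar t}})
$$
in the derived category of $\QQ$-vector spaces. Taking alternating sums of Frobenius traces on cohomology, the left-hand side is by construction $f_{\R\phi_!\FF}(k_m,t)$. The second ingredient is the Grothendieck-Lefschetz trace formula applied to the fiber $X_t$ over $k_m$ and the complex $\FF|_{X_t}$:
$$
\sum_i(-1)^i\Tr\left(\Frob_{k_m}\bigm|\HH^i_c(X_{\bar t},\FF|_{X_{\bar t}})\right)=\sum_{u\in X_t(k_m)}\Tr(\Frob_{k_m,u}|\FF_{\bar u}),
$$
and the right-hand side of this equation is exactly $\phi_!f_\FF(k_m,t)$ by the description of $X_t(k_m)$ as $\set{u\in X(k_m):\phi(u)=t}$.

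The main obstacle is not so much a new technical difficulty as a careful chaining of proper base change for $\R\phi_!$ with the Grothendieck-Lefschetz trace formula. The point that needs some attention is the bookkeeping of alternating sums, namely that the function $f_K$ attached to a complex $K\in\Dbc(Y,\QQ)$ in the preceding discussion coincides with the alternating sum of Frobenius traces on the total cohomology of the stalk, and that the resulting construction depends only on the class of $\FF$ in $K_0(X,\QQ)$ and not on its specific representative. Both points follow directly from the identity $[K]=\sum_i(-1)^i[\HHH^i(K)]$ already recorded in the paper and from the fact that $\R\phi_!$ preserves distinguished triangles and hence respects the relations defining $K_0$.
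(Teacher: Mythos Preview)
Your argument is correct and is precisely what the paper has in mind: the statement is recorded there without proof, only with the remark that it is an ``easy consequence of the definitions and Grothendieck's trace formula'' together with a reference to \cite[1.1.1.3]{laumon1987transformation}. Your use of $\R\phi_!$, proper base change to identify stalks with compactly supported cohomology of fibers, and the trace formula on each fiber is exactly the intended justification.
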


\begin{defn}
 For every $\ell$-adic unit $\alpha\in\QQ$ of integral $q$-weight, the \emph{constant function} $\kappa_\alpha\in\CC_X$ defined by $\alpha$ is given by $(k_m,t)\mapsto \alpha^m$. 
\end{defn}

The constant function $\kappa_\alpha$ is represented by the geometrically constant sheaf $\alpha^{\mathrm{deg}}$, that is, the pull-back of the character $\mathrm{Gal}(\bar k/k)\to\QQ^\star$ mapping the geometric Frobenius element to $\alpha$. The subset of constant functions is a multiplicative subgroup of $\CC_{X,rep}$, but it is not closed under addition.

\begin{defn}
 Let $f\in\CC_X$, $m\geq 1$ and $t\in X(k_m)$. The \emph{local $L$-function of $f$ at $t$} is defined as
$$
L(f,k_m,t;T):=\exp\sum_{s\geq 1} f(k_{ms},t)\frac{T^s}{s}\in 1+T\QQ[[T]]
$$ 
\end{defn}

It is clear that for every $f$ and $g$ in $\CC_X$ we have
$$
L(f+g,k_m,t;T)=L(f,k_m,t;T)\cdot L(g,k_m,t;T),
$$
for every $\ell$-adic unit $\alpha$ of integral $q$-weight
$$
L(\kappa_\alpha f,k_m,t;T)=L(f,k_m,t;\alpha^m T)
$$
and, for every $k$-morphism $\phi:X\to Y$
$$
L(\phi^\star f,k_m,t;T)=L(f,k_m,\phi(t);T).
$$

\begin{prop}
 For every $f\in\CC_{X,rep}$, every $m\geq 1$ and every $t\in X(k_m)$ the $L$-function $L(f,k_m,t;T)$ is rational and all its reciprocal roots and poles have integral $q^m$-weight.
\end{prop}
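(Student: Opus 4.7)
The plan is to reduce to the case where $f$ is represented by a single constructible sheaf, and then identify the local $L$-function with the characteristic polynomial of a Frobenius action, which is manifestly rational.

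Since $f\in\CC_{X,rep}$, there exists $F\in K_0(X,\QQ)$ with $f=f_F$. Writing $F=\sum_i n_i[\FF_i]$ as a finite $\Z$-linear combination of classes of constructible $\QQ$-sheaves $\FF_i$, we get $f=\sum_i n_i f_{\FF_i}$. The additive and scalar multiplicative properties of the local $L$-function recorded just before the statement give
\[
L(f,k_m,t;T)=\prod_i L(f_{\FF_i},k_m,t;T)^{n_i},
\]
so a product of rational functions with reciprocal roots/poles of integral $q^m$-weight will again have those properties. Thus I reduce to the case $f=f_\FF$ for a single constructible sheaf $\FF$ on $X$.

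For such an $\FF$, fix $t\in X(k_m)$. The Frobenius at $t$ over $k_{ms}$ is the $s$-th power of the Frobenius at $t$ over $k_m$, so if $\alpha_1,\ldots,\alpha_n$ are the eigenvalues (with multiplicity) of $\Frob_{k_m,t}$ on the stalk $\FF_{\bar t}$, then
\[
f_\FF(k_{ms},t)=\Tr(\Frob_{k_m,t}^s\mid\FF_{\bar t})=\sum_{j=1}^n \alpha_j^s.
\]
Plugging this into the definition and using the identity $\exp\sum_{s\geq 1}\alpha^s T^s/s=(1-\alpha T)^{-1}$ yields
\[
L(f_\FF,k_m,t;T)=\prod_{j=1}^n(1-\alpha_j T)^{-1}=\det\bigl(1-T\,\Frob_{k_m,t}\mid\FF_{\bar t}\bigr)^{-1},
\]
which is rational in $T$.

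For the weight assertion, recall that by standing assumption all sheaves are mixed of integral $q$-weights. The stalk $\FF_{\bar t}$ at a $k_m$-rational point is then mixed as a module over $\Gal(\bar k/k_m)$, with the $q^m$-weights of its Frobenius eigenvalues equal to $m$ times the corresponding $q$-weights, hence still integral. Consequently each $\alpha_j$ is pure of integral $q^m$-weight, and after taking the product over $i$ with multiplicities $n_i$ the reciprocal roots and poles of $L(f,k_m,t;T)$ remain in the set $\{\alpha_j\}$ (with possible cancellations), so they are all pure of integral $q^m$-weight.

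There is no real obstacle here: the argument is a direct unwinding once one reduces to a single sheaf. The only point to take care with is that, because $F$ is an element of the Grothendieck group rather than an honest sheaf, one must allow negative exponents in the product decomposition of $L(f,k_m,t;T)$, which is harmless since the class of rational functions is closed under inversion and integrality of weight is preserved under taking reciprocals.
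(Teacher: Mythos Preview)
Your argument is correct and follows the same route as the paper: reduce by additivity to a single sheaf, then identify the local $L$-function with $\det(1-T\,\Frob_{k_m,t}\mid\FF_{\bar t})^{-1}$. One small slip in your weight explanation: if $\FF$ is pure of weight $w$ over $k$, then by definition the eigenvalues of $\Frob_{k_m,t}$ have absolute value $(q^m)^{w/2}$, so their $q^m$-weight is $w$ itself, not $m$ times $w$; the conclusion that it is integral is of course unaffected.
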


\begin{proof}
 By additivity, it suffices to prove if when $f$ is represented by a sheaf $\FF\in\Sha$. But in that case it is well known that
$$
L(f,k_m,t;T)=\exp\sum_{s\geq 1}\Tr(\Frob_{k_m,t}^s|\FF_{\bar t})\frac{T^s}{s}=\det(1-T\cdot\Frob_{k_m,t}|\FF_{\bar t})^{-1}.
$$
\end{proof}

\section{The convolution Adams operation}

Let $S=\Spec k$ be the spectrum of a field (or, more generally, a base scheme such that the derived category of $\ell$-adic sheaves is well defined on ${\mathcal Sch}/S$, e.g. a regular scheme of dimension $\leq 1$ \cite[1.1.2]{deligne1980conjecture}). Let $X$ be a separated scheme of finite type over $S$ and $H$ a finite group (regarded as acting trivially on $X$), and consider the category $\Sh(X,\QQ)_H$ of $\QQ$-sheaves on $X$ endowed with an action of $H$ and its derived category $\Dbc(X,\QQ)_H$.

Given a representation $\rho:H\to\mathrm{GL}(V)$ (where $V$ is a finite dimensional vector space over $\QQ$) we get a functor $\Sh(X,\QQ)_H\to\Sh(X,\QQ)$ given by $\FF\to\FF^\rho:={\mathcal Hom}_H(V,\FF)$, where $V$ is regarded as a constant sheaf on $X$ with an $H$-action given by $\rho$. If ${\mathbf 1}$ is the trivial representation then $\FF^{\mathbf 1}=\FF^H$ is the $H$-invariant part, and in general $\FF^\rho={\mathcal Hom}(V,\FF)^H$. Since $\QQ$ has characteristic zero, the functor $\FF\to\FF^\rho$ is exact, and it commutes with passage to fibres: for every geometric point $\bar x\in X(\bar k)$, we have $(\FF^\rho)_{\bar x}=\Hom_H(V,\FF_{\bar x})$. In particular, it extends to the derived category and we get a functor $\Dbc(X,\QQ)_H\to\Dbc(X,\QQ)$, $K\mapsto K^\rho$ such that $\HHH^i(K^\rho)=\HHH^i(K)^\rho$ for every $i\in{\mathbb Z}$. If $\rho':H\to\mathrm{GL}(V')$ is another finite dimensional $\QQ$-representation, it is clear that $K^{\rho\oplus\rho'}=K^\rho\oplus K^{\rho'}$.

\begin{lem}\label{directimage}
 Let $f:X\to Y$ be an $S$-morphism of separated schemes of finite type over $S$ and $K\in\Dbc(X,\QQ)$ an object with an $H$-action. Then for every finite dimensional representation $\rho$ of $H$ $(\R f_\star K)^\rho=\R f_\star (K^\rho)$ and $(\R f_! K)^\rho=\R f_! (K^\rho)$.
\end{lem}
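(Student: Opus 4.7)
The plan is to reduce to the case of the trivial representation and then exploit that we are in characteristic zero. By definition, $K^\rho = \mathcal{Hom}_H(V,K)$, where $V$ is the constant sheaf on $X$ with value the underlying vector space of $\rho$. Since $V$ is constant of finite rank, there is a natural isomorphism $\mathcal{Hom}(V,K) \cong V^\vee \otimes K$, compatible with the induced $H$-actions (with $H$ acting on $V^\vee$ through the contragredient representation and on $V^\vee \otimes K$ diagonally). Hence $K^\rho = (V^\vee \otimes K)^H$, and it suffices to prove (a) that $\R f_\star$ and $\R f_!$ commute (in an $H$-equivariant way) with tensoring by the constant sheaf $V^\vee$, and (b) that they commute with the operation of taking $H$-invariants.

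For (a), the constant sheaf $V^\vee$ is a finite direct sum of copies of the constant sheaf $\QQ$, so tensoring with $V^\vee$ amounts to taking a finite direct sum. Both $\R f_\star$ and $\R f_!$ commute with finite direct sums, and the resulting natural isomorphisms $\R f_\star(V^\vee \otimes K) \cong V^\vee \otimes \R f_\star K$ and $\R f_!(V^\vee \otimes K) \cong V^\vee \otimes \R f_! K$ are manifestly compatible with the $H$-actions, since $V^\vee$ is pulled back from the base and the action on either side of the isomorphism is the tensor product of the given actions.

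For (b), we use the standard fact that in characteristic zero the functor $(-)^H$ is realized by the idempotent $e = \frac{1}{|H|}\sum_{h \in H} h \in \QQ[H]$: for any $H$-equivariant object $\GGG$, the endomorphism of $\GGG$ induced by $e$ is a projector whose image $\GGG^H$ is a direct summand of $\GGG$. Applying the additive functor $\R f_\star$ (respectively $\R f_!$) to this idempotent acting on $\GGG = V^\vee \otimes K$, we obtain an idempotent endomorphism of $\R f_\star \GGG$ whose image is simultaneously $\R f_\star(\GGG^H) = \R f_\star(K^\rho)$ and $(\R f_\star \GGG)^H = (\R f_\star K)^\rho$; the argument for $\R f_!$ is identical. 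The only substantive ingredient, and hence the only possible obstacle, is the observation that in characteristic zero the $H$-invariants functor is a direct-summand projection, so no serious difficulty arises.
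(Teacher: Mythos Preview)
Your proof is correct. The paper's argument follows a slightly different path: it first treats the underived case for a sheaf $\FF$, using that $f_\star$ is left exact (hence commutes with the limit defining $(-)^H$) together with the adjunction $\mathcal{Hom}(V,f_\star\FF)\cong f_\star\mathcal{Hom}(f^\star V,\FF)$, and only then passes to the derived category by invoking the exactness of $(-)^\rho$ in characteristic zero. Your argument instead works directly at the derived level, realizing $(-)^H$ as the image of the averaging idempotent $e=\frac{1}{|H|}\sum_{h\in H}h$ and using that any additive functor (in particular $\R f_\star$ and $\R f_!$) sends this idempotent splitting to an idempotent splitting. Both routes hinge on the same characteristic-zero fact that $(-)^\rho$ is a direct-summand projection; your version is somewhat more streamlined in that it avoids the two-step ``prove for sheaves, then derive'' structure, while the paper's version makes the compatibility with $f_\star$ at the sheaf level more explicit.
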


\begin{proof}
 It suffices to prove it for $\R f_\star$, since clearly $j_! (K^\rho)=(j_! K)^\rho$ for an open immersion $j$. Let $\FF$ be a $\QQ$-sheaf on $X$ with an $H$-action. Since $f_\star$ is left exact, we have $(f_\star\FF)^H=f_\star(\FF^H)$. Therefore for every representation $\rho:H\to\GL(V)$ of $H$
$$
(f_\star\FF)^\rho={\mathcal Hom}_H(V,f_\star\FF)={\mathcal Hom}(V,f_\star\FF)^H=(f_\star{\mathcal Hom}(f^\star V,\FF))^H=
$$
$$
=f_\star({\mathcal Hom}(V,\FF)^H)=f_\star({\mathcal Hom}_H(V,\FF))=f_\star(\FF^\rho).
$$

By the exactness of $(-)^\rho$ we deduce that $(\R f_\star\FF)^\rho=\R f_\star(\FF^\rho)$ for every sheaf $\FF$, and then also for every object $K\in\Dbc(X,\QQ)$.
\end{proof}

We will be mainly interested in the following situation: $K\in\Dbc(X,\QQ)$ is any object, $r\geq 1$ is an integer, and the symmetric group in $r$ letters ${\mathfrak S}_r$ acts on $K^{\otimes r}$ via permutation of the factors. Then for every representation $\rho:{\mathfrak S}_r\to\GL(V)$ we get an object $\R(\rho)K:=(K^{\otimes r})^\rho\in\Dbc(X,\QQ)$. We write $\Sym^r K$ (respectively $\wedge^r K$) for $\R(\rho)K$ if $\rho$ is the trivial representation (resp. the sign character).

If $G$ is a geometrically connected commutative group scheme of finite type over $S$ we will also use the following variant. Recall that, for any two objects $K,L\in\Dbc(G,\QQ)$, their \emph{$!$-convolution} (which we will simply call convolution) is the object 
$K\ast L :=\R\mu_!(K\boxtimes L)\in\Dbc(G,\QQ)$, where $\mu:G\times G\to G$ is the multiplication map. It is an associative and commutative triangulated bifunctor \cite[2.5]{katz1996rls}. 

For any $r\geq 1$, the multiplication map $G^r\to G$ factors through $\Sym^r G$, so $K^{\ast r}:=K\ast\cdots\ast K$ ($r$ factors) is endowed with a natural action of ${\mathfrak S}_r$, induced by its action on $\pi_!K^{\boxtimes r}$ above $\Sym^rG$ by permutation of the factors (where $\pi:G^r\to\Sym^r G$ is the natural projection). We denote $\R(\ast\rho)K:=(K^{\ast r})^\rho\in\Dbc(G,\QQ)$, and we write $\Sym^{\ast r} K$ (respectively $\wedge^{\ast r} K$) for $\R(\ast\rho)K$ if $\rho$ is the trivial representation (resp. the sign character).

The following result generalizes \cite[Lemme 1.3]{deligne569fonction} (for $A$ a field of characteristic $0$):

\begin{prop}\label{mainprop}
 Let $a:G\to S$ be the structural map. Then for every $K\in\Dbc(G,\QQ)$ and every finite dimensional $\QQ$-representation $\rho$ of ${\mathfrak S}_r$ we have a quasi-isomorphism
$$
\R a_!(\R(\ast\rho)K)\cong\R(\rho)(\R a_!K).
$$
In particular, if $S=\Spec k$ is the spectrum of a separably closed field we have $$\R\Gamma_c(G,\R(\ast\rho)K)\cong\R(\rho)(\R\Gamma_c(G,K)).$$
\end{prop}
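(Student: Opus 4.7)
The plan is to reduce everything to the $r$-fold external product on $G^r$ and then apply Künneth equivariantly.

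First I would unwind the definition of $r$-fold convolution. Let $m \colon G^r \to G$ be the $r$-fold multiplication map and $a_r \colon G^r \to S$ the structural map, so $a_r = a \circ m$. By associativity of convolution and proper base change (compositions of $\R f_!$),
$$
K^{\ast r} \;=\; \R m_!\bigl(K^{\boxtimes r}\bigr).
$$
The symmetric group $\mathfrak S_r$ acts on $G^r$ by permutation of factors, on $K^{\boxtimes r}$ by simultaneous permutation of the spatial factors and the tensor factors, and the map $m$ is $\mathfrak S_r$-invariant because $G$ is commutative. Hence the action of $\mathfrak S_r$ on $K^{\ast r}$ defined via the convolution product coincides with the one obtained by pushing forward the permutation action on $K^{\boxtimes r}$ along $m$.

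Next I would apply $\R a_!$ and use the compatibility with Lemma \ref{directimage}. Since $a \circ m = a_r$, proper base change gives
$$
\R a_!\bigl(K^{\ast r}\bigr) \;=\; \R a_!\,\R m_!\bigl(K^{\boxtimes r}\bigr) \;=\; \R a_{r,!}\bigl(K^{\boxtimes r}\bigr),
$$
and the Künneth formula for cohomology with compact supports yields a canonical isomorphism
$$
\R a_{r,!}\bigl(K^{\boxtimes r}\bigr) \;\cong\; \bigl(\R a_! K\bigr)^{\otimes r}
$$
in $\Dbc(S,\QQ)$. This isomorphism is $\mathfrak S_r$-equivariant, where the group acts on the right-hand side by permutation of tensor factors; this is really the content of the argument, and it follows from the fact that Künneth is built from the projection formula and is natural in each argument, so permuting the $r$ factors of $G^r$ on the left matches permuting the $r$ tensor factors on the right.

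Finally, Lemma \ref{directimage} applied to $a_!$ and the $\mathfrak S_r$-representation $\rho$ gives
$$
\R a_!\bigl(\R(\ast\rho)K\bigr) \;=\; \R a_!\bigl((K^{\ast r})^\rho\bigr) \;=\; \bigl(\R a_!(K^{\ast r})\bigr)^\rho \;\cong\; \bigl((\R a_! K)^{\otimes r}\bigr)^\rho \;=\; \R(\rho)\bigl(\R a_! K\bigr),
$$
which is the desired quasi-isomorphism. The case $S = \Spec k$ with $k$ separably closed is then the specialization obtained by identifying $\R a_!$ with $\R\Gamma_c(G,-)$. The main obstacle is the $\mathfrak S_r$-equivariance of the Künneth isomorphism; once this bookkeeping is in place, the rest is formal manipulation of the six functors together with Lemma \ref{directimage}.
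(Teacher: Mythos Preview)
Your proof is correct and follows essentially the same route as the paper's: reduce $\R(\ast\rho)K$ to the isotypic component of $K^{\ast r}=\R m_!(K^{\boxtimes r})$, apply Lemma~\ref{directimage} to pull the $(-)^\rho$ past $\R a_!$, and then invoke the $\mathfrak S_r$-equivariant K\"unneth isomorphism $\R(a\circ m)_!K^{\boxtimes r}\cong(\R a_!K)^{\otimes r}$. The only cosmetic difference is the order in which you present the steps (you unwind K\"unneth before invoking Lemma~\ref{directimage}, the paper does the reverse), but the argument is the same.
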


\begin{proof}
 By lemma \ref{directimage} we have $\R a_!(\R(\ast\rho)K)=\R a_!((K^{\ast r})^\rho)\cong(\R a_!(K^{\ast r}))^\rho$. If $\mu:G^r\to G$ denotes the multiplication map, $K^{\ast r}=\R\mu_! K^{\boxtimes r}$, so 
$$
(\R a_!(K^{\ast r}))^\rho\cong(\R a_!(\R\mu_!K^{\boxtimes r}))^\rho\cong(\R(a\mu)_! K^{\boxtimes r})^\rho.
$$

Now by K\"unneth, there is a ${\mathfrak S}_r$-equivariant quasi-isomorphism $\R(a\mu)_! K^{\boxtimes r}\cong(\R a_! K)^{\otimes r}$ (where ${\mathfrak S}_r$ acts on the right by permutation of the factors), so
$$
(\R(a\mu)_! K^{\boxtimes r})^\rho\cong ((\R a_! K)^{\otimes r})^\rho=\R(\rho)(\R a_! K).
$$
\end{proof}

We also have the following shift formulas:

\begin{prop}\label{shift}
 Let $\sigma$ be the sign character and $\rho$ any finite dimensional representation of ${\mathfrak S}_r$. For every $K\in\Dbc(X,\QQ)$ we have
$$
\R(\rho)(K[1])\cong(\R(\rho\otimes\sigma)K)[r]
$$
and for every $K\in\Dbc(G,\QQ)$
$$
\R(\ast\rho)(K[1])\cong(\R(\ast(\rho\otimes\sigma))K)[r].
$$
\end{prop}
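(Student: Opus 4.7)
The plan is to deduce both shift formulas from a single Koszul-sign calculation: the permutation action of $\mathfrak{S}_r$ on $(K[1])^{\otimes r}$ agrees, under the natural identification with $K^{\otimes r}[r]$, with the ordinary permutation action on $K^{\otimes r}$ twisted by the sign character $\sigma$. To make this precise I would write
$$
(K[1])^{\otimes r}\cong K^{\otimes r}\otimes(\QQ[1])^{\otimes r}\cong K^{\otimes r}[r]
$$
and observe that the $\mathfrak{S}_r$-action by permutation of the $r$ shifted factors restricts, on the second tensor factor, to the permutation action on $(\QQ[1])^{\otimes r}$. Since each copy of $\QQ[1]$ lives in odd cohomological degree, the Koszul rule forces the transposition of two adjacent factors to act by $-1$, so the induced action of $\mathfrak{S}_r$ on $(\QQ[1])^{\otimes r}$ is exactly $\sigma$. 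Hence the identification above intertwines the natural permutation action on $(K[1])^{\otimes r}$ with the standard permutation action on $K^{\otimes r}[r]$, twisted by $\sigma$.

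From this the first formula is formal. If $W$ is an $\mathfrak{S}_r$-module and $W'$ denotes the same underlying object with action twisted by $\sigma$, a direct unwinding of the definition of $\Hom_{\mathfrak{S}_r}$ gives $(W')^\rho = W^{\rho\otimes\sigma^{-1}} = W^{\rho\otimes\sigma}$, using $\sigma^2=\mathbf{1}$. Applied to $W = K^{\otimes r}[r]$, and using that shifts commute with the exact functor $(-)^\tau$, this yields
$$
\R(\rho)(K[1]) = \bigl((K[1])^{\otimes r}\bigr)^\rho \cong \bigl(K^{\otimes r}[r]\bigr)^{\rho\otimes\sigma} = \bigl((K^{\otimes r})^{\rho\otimes\sigma}\bigr)[r] = \bigl(\R(\rho\otimes\sigma)K\bigr)[r].
$$

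For the convolution statement I would repeat the argument one level up. The same Koszul computation, applied to the external tensor product on $G^r$, gives an $\mathfrak{S}_r$-equivariant quasi-isomorphism $(K[1])^{\boxtimes r}\cong K^{\boxtimes r}[r]$ in which the permutation action on the right-hand side is again twisted by $\sigma$. Applying $\R\mu_!$ for the multiplication map $\mu:G^r\to G$, and using that $\R\mu_!$ commutes with shifts, produces an $\mathfrak{S}_r$-equivariant quasi-isomorphism $(K[1])^{\ast r}\cong(K^{\ast r})[r]$ carrying the same $\sigma$-twist on the action. Taking the $\rho$-isotypic part exactly as above now gives the second formula.

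The main, and essentially only, obstacle is the sign bookkeeping. One has to verify cleanly that the Koszul convention for moving shifts past each other produces the sign character $\sigma$ and nothing more. This reduces to computing the transposition of two adjacent factors, where the sign is $(-1)^{1\cdot 1}=-1$, together with verifying that Koszul signs are compatible with the braid relations so that the computation extends to arbitrary permutations. Once this is settled, both shift formulas follow purely formally from the exactness of $(-)^\tau$ and the commutation of shifts with $\R\mu_!$ and with isotypic projections.
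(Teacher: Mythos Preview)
Your proposal is correct and follows essentially the same approach as the paper: both reduce the statement to showing that the permutation action of $\mathfrak{S}_r$ on $(K[1])^{\otimes r}$ agrees, under the canonical identification with $K^{\otimes r}[r]$, with the permutation action twisted by $\sigma$, and both verify this by checking the Koszul sign on a transposition. The only presentational difference is that the paper carries out the $r=2$ computation explicitly on elements (writing $\phi(a_i\otimes b_j)=(-1)^i a_i\otimes b_j$ and checking $\phi\circ\tau=-\tau\circ\phi$) and then inducts on $r$ via adjacent transpositions, whereas you isolate the sign by factoring $(K[1])^{\otimes r}\cong K^{\otimes r}\otimes(\QQ[1])^{\otimes r}$ and reading it off from the odd-degree object $(\QQ[1])^{\otimes r}$; this is the same computation packaged slightly differently.
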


\begin{proof}
 We will prove the first formula, the second one is similar. With the obvious notations, we have
$$
\R(\rho)(K[1])\cong{\mathcal Hom}_{{\mathfrak S}_r}(\rho,K[1]^{\otimes r})
$$
and
$$
(\R(\rho\otimes\sigma)K)[r]\cong{\mathcal Hom}_{{\mathfrak S}_r}(\rho\otimes\sigma,K^{\otimes r}[r])
$$
so it suffices to show that the action of ${\mathfrak S}_r$ on $K^{\otimes r}[r]$ is the same as its action on $K[1]^{\otimes r}(\cong K^{\otimes r}[r])$ twisted by $\sigma$.

Suppose that $r=2$, and let $\tau\in{\mathfrak S}_2$ be the transposition. There is a natural isomorphism $\phi:K[1]\otimes K[1]\cong (K\otimes K)[2]$ given by $a_i\otimes b_j\mapsto (-1)^{i}a_i\otimes b_j$ for $a_i\in K^i=K[1]^{i-1}$ and $b_j\in K^j=K[1]^{j-1}$. On $K[1]\otimes K[1]$, the transposition $\tau$ acts by $a_i\otimes b_j\mapsto (-1)^{(i-1)(j-1)}b_j\otimes a_i$ (since $a_i,b_j$ live on degrees $i-1$ and $j-1$ respectively). On $(K\otimes K)[2]$ it acts by $a_i\otimes b_j\mapsto (-1)^{ij}b_j\otimes a_i$. So for $a_i\otimes b_j\in K[1]^{i-1}\otimes K[1]^{j-1}$ we have $\phi\circ\tau(a_i\otimes b_j)=\phi((-1)^{(i-1)(j-1)}b_j\otimes a_i)=(-1)^{ij-i+1}b_j\otimes a_i$ and $\tau\circ\phi(a_i\otimes b_j)=\tau((-1)^ia_i\otimes b_j)=(-1)^{i+ij}b_j\otimes a_i$, so $\phi\circ\tau(a_i\otimes b_j)=-\tau\circ\phi(a_i\otimes b_j)$. In other words, the action of $\tau$ on $K[1]\otimes K[1]$ is the negative of its action on $(K\otimes K)[2]$ via the isomorphism $\phi$.

Now let $r>2$, and let $\tau\in{\mathfrak S}_r$ be any transposition. Without loss of generality, we may assume that $\tau=(1\; 2)$. Then we have an isomorphism $\phi_r:K[1]^{\otimes r}=K[1]^{\otimes 2}\otimes K[1]^{\otimes(r-2)}\cong K^{\otimes 2}\otimes K^{\otimes (r-2)}[r]=K^{\otimes r}[r]$, where $\phi_r=\phi\otimes\phi_{r-2}$ inductively. Since $\tau$ acts trivially on $K[1]^{\otimes (r-2)}$ and $K^{\otimes (r-2)}$, by the $r=2$ case the actions of $\tau$ on $K[1]^{\otimes r}$ and $K^{\otimes r}[r]$ differ by sign. We conclude that the actions of ${\mathfrak S}_r$ on $K[1]^{\otimes r}$ and $K^{\otimes r}[r]$ are twists of each other by the sign character.
\end{proof}

If $\rho':{\mathfrak S}_r\to V'$ is another finite dimensional $\QQ$-representation of ${\mathfrak S}_r$, it is clear that $\R(\rho\oplus\rho')K=\R(\rho)K\oplus\R(\rho')K$ and $\R(\ast(\rho\oplus\rho'))K=\R(\ast\rho)K\oplus\R(\ast\rho')K$. Therefore it makes sense to define $\R(\tau)K$ (respectively $\R(\ast\tau)K$) as an element of the Grothendieck group $K_0(X,\QQ)$ (resp. $K_0(G,\QQ)$) for any virtual $\QQ$-representation $\tau$ of ${\mathfrak S}_r$: if $\tau=\rho-\rho'$, we set $\R(\tau)K:=[\R(\rho)K]-[\R(\rho')K]$ (resp. $\R(\ast\tau)K:=[\R(\ast\rho)K]-[\R(\ast\rho')K]$). Proposition \ref{mainprop} implies that 
\begin{equation}\label{adamscommute}
\R a_!(\R(\ast\tau)K)=\R(\tau)(\R a_!K)
\end{equation}
in $K_0(S,\QQ)$ for $a:G\to S$ the structural map.

Let $\rho:{\mathfrak S}_r\to \mathrm{GL}(L)$ be the standard representation, where $L\subset\QQ^r$ is the hyperplane defined by $\sum x_i=0$, and let $\tau_r$ be the virtual representation $\sum_{i=0}^{r-1} (-1)^i\wedge^i\rho$. 

\begin{prop}\label{young}
 Let $\FF\in\Sha$. Then
$$
\R(\tau_r)\FF=\FF^{[r]}:=\sum_{i=1}^r(-1)^{i-1} i [\Sym^{r-i}\FF\otimes\wedge^i\FF]\in K_0(X,\QQ)
$$
is the $r$-th Adams power of $\FF$.
\end{prop}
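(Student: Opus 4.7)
The plan is to express $\tau_r$ in terms of exterior powers of the permutation representation of $\mathfrak S_r$ on $\QQ^r$, compute $\R(-)\FF$ on those directly by Frobenius reciprocity, and then match the result with $\FF^{[r]}$ via a standard identity in $K_0(X,\QQ)$.

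First, since the permutation representation $\pi$ of $\mathfrak S_r$ decomposes as $\pi\cong\mathbf{1}\oplus\rho$, we have $\wedge^i\pi\cong\wedge^i\rho\oplus\wedge^{i-1}\rho$, which inverts to $\wedge^i\rho=\sum_{j=0}^i(-1)^{i-j}\wedge^j\pi$. Substituting into $\tau_r=\sum_{i=0}^{r-1}(-1)^i\wedge^i\rho$ and interchanging the order of summation yields
$$
\tau_r=\sum_{j=0}^{r-1}(r-j)(-1)^j\wedge^j\pi
$$
in the Grothendieck group of $\QQ$-representations of $\mathfrak S_r$.

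Second, I would recognize $\wedge^j\pi$ as the induced representation $\mathrm{Ind}_{\mathfrak S_j\times\mathfrak S_{r-j}}^{\mathfrak S_r}(\sigma_j\boxtimes\mathbf{1})$, where $\sigma_j$ is the sign character of $\mathfrak S_j$: its standard basis is freely permuted by the $j$-subsets of $\{1,\ldots,r\}$, and the Young subgroup stabilizing $\{1,\ldots,j\}$ acts on $e_1\wedge\cdots\wedge e_j$ via $\sigma_j\boxtimes\mathbf{1}$. Frobenius reciprocity for the functor $\R(-)\FF$ — legitimate because in characteristic zero both induction and restriction of $\QQ$-representations are exact and $\R(-)\FF$ sends direct sums to direct sums — then gives
$$
\R(\wedge^j\pi)\FF \cong \wedge^j\FF\otimes\Sym^{r-j}\FF.
$$
Combining the two steps,
$$
\R(\tau_r)\FF = \sum_{j=0}^{r-1}(r-j)(-1)^j\,[\Sym^{r-j}\FF\otimes\wedge^j\FF] \in K_0(X,\QQ).
$$

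Finally, I would reconcile this with $\FF^{[r]}=\sum_{i=1}^r(-1)^{i-1}i\,[\Sym^{r-i}\FF\otimes\wedge^i\FF]$. A term-by-term comparison shows that their difference equals $r\sum_{i=0}^r(-1)^i[\Sym^{r-i}\FF\otimes\wedge^i\FF]$, which vanishes in $K_0(X,\QQ)$ by the classical Koszul identity $\sum_{i=0}^n(-1)^ie_ih_{n-i}=0$ for $n\geq 1$ (the symmetric-function identity corresponding to the exactness of the Koszul complex, transferred to $K_0$ via the splitting principle). The most delicate step is the Frobenius-reciprocity identification in Step~2; all other manipulations are purely combinatorial and rely only on the exactness of $(-)^\rho$ recorded at the start of Section~3.
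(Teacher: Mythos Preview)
Your proof is correct and shares its core with the paper's: both identify $\Sym^{r-i}\FF\otimes\wedge^i\FF$ with $\R(\eta_i)\FF$ for $\eta_i=\mathrm{Ind}^{\mathfrak S_r}_{\mathfrak S_{r-i}\times\mathfrak S_i}(\mathbf 1\times\sigma)$ via Frobenius reciprocity. The paper then invokes Pieri's formula to write $\eta_i=\wedge^{i-1}\rho\oplus\wedge^i\rho$ (hook-shaped irreducibles) and telescopes the alternating sum $\sum_i(-1)^{i-1}i\,\eta_i$ directly into $\tau_r$, finishing with no further input. You instead observe that $\eta_j=\wedge^j\pi$ (which is the same decomposition, derived from $\pi=\mathbf 1\oplus\rho$ rather than cited as Pieri), rewrite $\tau_r$ as $\sum_j(-1)^j(r-j)\wedge^j\pi$, and then must appeal to the Koszul relation $\sum_j(-1)^j[\wedge^j\FF\otimes\Sym^{r-j}\FF]=0$ to reconcile the two coefficient patterns. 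So your route trades the Pieri citation for one extra standard identity; the paper's direct telescoping is slightly more economical. One small remark: for constructible $\ell$-adic sheaves the ``splitting principle'' is not the most natural justification of that Koszul relation---it is cleaner to invoke the natural exact complex $0\to\wedge^r\FF\to\wedge^{r-1}\FF\otimes\FF\to\cdots\to\Sym^r\FF\to 0$ (functorial in $\FF$, hence valid for sheaves), or alternatively the injectivity of $\Phi$ combined with the vector-space identity on stalks.
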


\begin{proof}
 For every $i=1,\ldots,r$ we have 
$$
\Sym^{r-i}\FF\otimes\wedge^ i\FF ={\mathcal Hom}_{{\mathfrak S}_{r-i}\times{\mathfrak S}_i}({\mathbf 1}\times\sigma,\FF^{\otimes r})
$$
where $\sigma$ is the sign character of ${\mathfrak S}_i$. By Frobenius reciprocity, this is the same as $\R(\eta)\FF$, where $\eta=\mathrm{Ind}^{{\mathfrak S}_r}_{{\mathfrak S}_{r-i}\times{\mathfrak S}_i}({\mathbf 1}\times\sigma)$.

By Pieri's formula (cf. \cite[4.41, A.7]{fulton1996representation}) for $i<r$ $\eta$ is the sum of two irreducible representations $\eta_i$ and $\eta_{i+1}$ with inverted-L shaped Young diagrams
$$
\yng(5,1,1)
$$
of vertical lengths $i$ and $i+1$ respectively, that is, the $(i-1)$-th and $i$-th exterior powers of the standard representation of ${\mathfrak S}_r$ (cf. \cite[4.6]{fulton1996representation}). Therefore
$$
\sum_{i=1}^r(-1)^{i-1} i[\Sym^{r-i}\FF\otimes\wedge^ i\FF]=\R\left(\left(\sum_{i=1}^{r-1}(-1)^{i-1} i (\wedge^{i-1}\rho+\wedge^{i}\rho)\right)+(-1)^{r-1} r\sigma\right)\FF=
$$
$$
=\R\left(\sum_{i=0}^{r-1} (-1)^{i} \wedge^i\rho\right)\FF=\R(\tau_r)\FF.
$$
\end{proof}

Since the Adams power is additive, we deduce

\begin{cor}
 The map $\Sh(X,\QQ)\to K_0(X,\QQ)$ given by $\FF\mapsto \R(\tau_r)\FF$ extends to a homomorphism of abelian groups $\R(\tau_r):K_0(X,\QQ)\to K_0(X,\QQ)$.
\end{cor}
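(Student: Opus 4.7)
The plan is to reduce the claim to additivity on short exact sequences: the assignment $\FF\mapsto[\FF^{[r]}]\in K_0(X,\QQ)$ extends to a group homomorphism on $K_0(X,\QQ)$ if and only if $[\FF^{[r]}]=[\GGG^{[r]}]+[\HHH^{[r]}]$ in $K_0(X,\QQ)$ for every short exact sequence $0\to\GGG\to\FF\to\HHH\to 0$ in $\Sha$. I would verify this additivity by packaging symmetric and exterior power operations into generating series in $K_0(X,\QQ)[[t]]$ and exploiting a logarithmic derivative identity.

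Introduce the formal power series
$$
\Sigma_\FF(t):=\sum_{j\geq 0}[\Sym^j\FF]\,t^j,\quad \Lambda_\FF(t):=\sum_{i\geq 0}[\wedge^i\FF]\,t^i,\quad \Psi_\FF(t):=\sum_{r\geq 1}[\FF^{[r]}]\,t^{r-1}.
$$
Swapping the double sum defining $\Psi_\FF(t)$ via the formula of Proposition \ref{young} and recognizing the coefficients of $\Lambda'_\FF(-t)$ yields the identity $\Psi_\FF(t)=\Lambda'_\FF(-t)\cdot\Sigma_\FF(t)$. On the other hand, the Koszul complex
$$
0\to\wedge^k\FF\to \wedge^{k-1}\FF\otimes\Sym^1\FF\to\cdots\to\Sym^k\FF\to 0
$$
is exact for every $k\geq 1$ (checked stalkwise, where $\FF$ is a finite dimensional $\QQ$-vector space), so its vanishing Euler characteristic in $K_0$ translates into the identity $\Sigma_\FF(t)\cdot\Lambda_\FF(-t)=1$. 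Differentiating the latter and combining with the former gives
$$
\Psi_\FF(t)=\Sigma'_\FF(t)/\Sigma_\FF(t)=\tfrac{d}{dt}\log\Sigma_\FF(t).
$$

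The short exact sequence $0\to\GGG\to\FF\to\HHH\to 0$ induces natural filtrations on $\Sym^j\FF$ and $\wedge^i\FF$ with graded pieces $\Sym^a\GGG\otimes\Sym^b\HHH$ ($a+b=j$) and $\wedge^c\GGG\otimes\wedge^d\HHH$ ($c+d=i$); in particular $\Sigma_\FF(t)=\Sigma_\GGG(t)\cdot\Sigma_\HHH(t)$ in $K_0(X,\QQ)[[t]]$. Taking logarithmic derivatives yields $\Psi_\FF(t)=\Psi_\GGG(t)+\Psi_\HHH(t)$, and extracting the coefficient of $t^{r-1}$ produces the required equality $[\FF^{[r]}]=[\GGG^{[r]}]+[\HHH^{[r]}]$.

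The main technical point is to produce the filtrations on $\Sym^j\FF$ and $\wedge^i\FF$ (and the Koszul exactness) for a general short exact sequence of constructible $\QQ$-sheaves, rather than merely for locally free sheaves. Both constructions are classical in the locally free case; they extend to our setting because the coefficient field has characteristic zero and the relevant complexes and filtrations are built functorially from $\FF^{\otimes r}$ equipped with its $\mathfrak{S}_r$-action, so one may verify their exactness and graded structure on geometric stalks, where $\FF$ is just a finite-dimensional $\QQ$-vector space.
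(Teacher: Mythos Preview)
Your argument is correct. The paper's proof is a single line: having identified $\R(\tau_r)\FF$ with the $r$-th Adams power $\FF^{[r]}$ in the preceding proposition, it simply invokes the additivity of Adams operations as a known fact and stops. You instead supply the standard generating-function proof of that additivity (Koszul identity $\Sigma_\FF\cdot\Lambda_\FF(-t)=1$, logarithmic derivative, multiplicativity of $\Sigma$ on short exact sequences). So your route is not genuinely different in spirit---it fills in precisely the step the paper black-boxes---but it is more self-contained, and your remark that the relevant filtrations and Koszul exactness can be checked on geometric stalks (where everything reduces to finite-dimensional $\QQ$-vector spaces) is the right way to justify these constructions for constructible sheaves. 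One small stylistic point: since $K_0(X,\QQ)$ is not a $\mathbb Q$-algebra, it is cleaner to phrase the key identity directly as $\Psi_\FF(t)=\Sigma'_\FF(t)\cdot\Sigma_\FF(t)^{-1}$ (which makes sense because $\Sigma_\FF$ has constant term $1$) rather than as $\tfrac{d}{dt}\log\Sigma_\FF(t)$; the additivity of $\Psi$ under products of $\Sigma$ then follows from the Leibniz rule without ever mentioning a logarithm.
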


\begin{rem}\label{rank}\emph{
 The sum $\sum_{i=0}^{r-1}(-1)^i\R(\wedge^i\rho)\FF$ gives an ``optimal'' expression for the Adams power, in the sense that there is no further cancellation among different sign terms. If $\FF$ has rank $n$ then for every $i=0,\ldots,r-1$ the rank of  $\R(\wedge^i\rho)\FF$ is the dimension of the Weyl module corresponding to the partition $(r-i,1,\cdots,1)$ (with $i$ $1$'s), that is,
$$
{{n+r-i-1}\choose{r}}{{r-1}\choose{i}}
$$
by \cite[Theorem 6.3, 6.4]{fulton1996representation}.}
\end{rem}

\begin{defn}
 Let $K\in \Dbc(G,\QQ)$. The $r$-th \emph{convolution Adams power} of $K$ is the object $K^{[\ast r]}:=\R(\ast\tau_r)K\in K_0(G,\QQ)$.
\end{defn}

\begin{prop}
 If $K\to M\to L\to K[1]$ is a distinguished triangle in $\Dbc(G,\QQ)$ then $M^{[\ast r]}=K^{[\ast r]}+L^{[\ast r]}$ for every $r\geq 1$. In particular, the $r$-th convolution Adams power extends to a homomorphism of abelian groups $K_0(G,\QQ)\to K_0(G,\QQ)$. 
\end{prop}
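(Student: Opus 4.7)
My plan is to adapt the classical additivity-of-Adams-operations argument from $\lambda$-ring theory to the convolution setting on $G$. The key input is Proposition \ref{young}: the virtual representation $\tau_r$ of $\mathfrak{S}_r$ corresponds, via the Frobenius characteristic isomorphism $R(\mathfrak{S}_r)\otimes\QQ\cong\Lambda_r\otimes\QQ$, to the Newton power sum $p_r$ (Proposition \ref{young} is precisely Newton's identity $p_r=\sum_{i=1}^r(-1)^{i-1}i\,h_{r-i}e_i$ translated to representations). The crucial Hopf-algebraic property of $p_r$ is that it is primitive: $\Delta(p_r)=p_r\otimes 1+1\otimes p_r$, where $\Delta$ is the comultiplication on $\Lambda$ coming from disjoint unions of alphabets. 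Under the standard dictionary exchanging products with inductions and coproducts with restrictions, this translates into the vanishing
$$\mathrm{Res}^{\mathfrak{S}_r}_{\mathfrak{S}_k\times\mathfrak{S}_{r-k}}\tau_r=0$$
as a virtual representation whenever $0<k<r$, the two boundary restrictions ($k=0$ and $k=r$) recovering $\tau_r$ itself.

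Given a distinguished triangle $K\to M\to L\to K[1]$, I would construct an $\mathfrak{S}_r$-equivariant filtration of $M^{\ast r}$ by iteratively applying the triangle in each of the $r$ convolution slots, using associativity and bi-functoriality of $\ast$. The associated graded, in the Grothendieck group $K_0(\Dbc(G,\QQ)_{\mathfrak{S}_r})$ of the equivariant derived category, should yield the decomposition
$$[M^{\ast r}]=\sum_{k=0}^r \mathrm{Ind}_{\mathfrak{S}_k\times\mathfrak{S}_{r-k}}^{\mathfrak{S}_r}[K^{\ast k}\ast L^{\ast(r-k)}],$$
with the convention $K^{\ast 0}=L^{\ast 0}=\delta_e$, the skyscraper at the identity of $G$. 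Applying $\R(\ast\tau_r)$ term by term and invoking Frobenius reciprocity (an equivariant enhancement of Lemma \ref{directimage}), each piece becomes
$$\R(\ast\tau_r)\mathrm{Ind}_{\mathfrak{S}_k\times\mathfrak{S}_{r-k}}^{\mathfrak{S}_r}(K^{\ast k}\ast L^{\ast(r-k)})=\R\big(\ast\,\mathrm{Res}^{\mathfrak{S}_r}_{\mathfrak{S}_k\times\mathfrak{S}_{r-k}}\tau_r\big)(K^{\ast k}\ast L^{\ast(r-k)}).$$
By primitivity only the $k=0$ and $k=r$ terms survive, contributing exactly $L^{[\ast r]}$ and $K^{[\ast r]}$ respectively, whence $[M^{[\ast r]}]=[K^{[\ast r]}]+[L^{[\ast r]}]$ in $K_0(G,\QQ)$. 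The extension to a homomorphism $K_0(G,\QQ)\to K_0(G,\QQ)$ is then formal, since this group is presented by distinguished triangles.

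The principal technical obstacle is the rigorous construction of the $\mathfrak{S}_r$-equivariant filtration of $M^{\ast r}$: iterating the triangle in the $r$ slots yields a cubical diagram with $2^r$ vertices indexed by subsets $S\subseteq\{1,\ldots,r\}$ (with $K$ at positions in $S$ and $L$ elsewhere), and one must verify that regrouping by $|S|$ produces the claimed induced representations compatibly with the $\mathfrak{S}_r$-action permuting positions. An alternative route, which avoids equivariant cubical bookkeeping, is to first prove a convolution analog of Proposition \ref{young} expressing $K^{[\ast r]}$ as an explicit alternating sum $\sum_{i=1}^r(-1)^{i-1}i[\Sym^{\ast(r-i)}K\ast\wedge^{\ast i}K]$ and then verify additivity directly by combining the binomial-type decompositions of $\Sym^{\ast n}$ and $\wedge^{\ast n}$ extended from split sums to triangles; the required cancellations among these terms are again precisely those encoded by the primitivity of $p_r$.
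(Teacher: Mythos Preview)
Your approach is essentially the same as the paper's: both build the $\mathfrak{S}_r$-equivariant filtration of $M^{\ast r}$ indexed by $k=|J|$, identify the $k$-th graded piece with $\mathrm{Ind}_{\mathfrak{S}_k\times\mathfrak{S}_{r-k}}^{\mathfrak{S}_r}(K^{\ast k}\ast L^{\ast(r-k)})$, and then apply Frobenius reciprocity. The only difference is in how the middle terms are killed: the paper computes $\mathrm{Res}^{\mathfrak{S}_r}_{\mathfrak{S}_k\times\mathfrak{S}_{r-k}}(\wedge^i\rho)$ explicitly via the Littlewood--Richardson rule and watches the alternating sum telescope, whereas you invoke the primitivity of the power sum $p_r$ (equivalently $\mathrm{Res}\,\tau_r=0$ for $0<k<r$) to dispatch them in one stroke. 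Your packaging is cleaner and explains \emph{why} the paper's telescoping cancellation works, but the underlying argument is identical; the ``technical obstacle'' you flag (the equivariant cubical filtration) is exactly the step the paper takes for granted in its first sentence.
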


\begin{proof}
 For every $r\geq 1$ $M^{\ast r}=\R\mu_!(M^{\boxtimes r})$ has a filtration with quotients $\R\mu_! P_k$ for $k=0,1,\ldots,r$, where
$$
P_k=\bigoplus_{J\subseteq\{1,\ldots,r\},|J|=k}N_{J,1}\boxtimes\cdots\boxtimes N_{J,r}
$$
and $N_{J,j}=K$ (respectively $N_{J,j}=L$) if $j\in J$ (resp. $j\notin J$). The action of ${\mathfrak S}_r$ preserves this filtration, and acts transitively on the set $\{\R\mu_!(N_{J,1}\boxtimes\cdots\boxtimes N_{J,r})|J\subseteq\{1,\ldots,r\},|J|=k\}$ for each $k$, with stabilizer ${\mathfrak S}_k\times{\mathfrak S}_{r-k}$ for $\R\mu_!(K^{\boxtimes k}\boxtimes L^{\boxtimes(r-k)})$. In other words, the action of ${\mathfrak S}_r$ on $\R\mu_!P_k$ is the action induced by that of ${\mathfrak S}_k\times{\mathfrak S}_{r-k}$ on $\R\mu_!(K^{\boxtimes k}\boxtimes L^{\boxtimes(r-k)})=K^{\ast k}\ast L^{\ast(r-k)}$ by permutation of the first $k$ and the last $r-k$ factors. Therefore
$$
[\R(\ast\rho)M]=[{\mathcal Hom}_{{\mathfrak S}_r}(\rho,M^{\ast r})]=\sum_{k=0}^r[{\mathcal Hom}_{{\mathfrak S}_r}(\rho,\mathrm{Ind}^{{\mathfrak S}_r}_{{\mathfrak S}_k\times{\mathfrak S}_{r-k}}K^{\ast k}\ast L^{\ast(r-k)})]=
$$
$$
=\sum_{k=0}^r[{\mathcal Hom}_{{\mathfrak S}_k\times{\mathfrak S}_{r-k}}(\rho_{|{\mathfrak S}_k\times{\mathfrak S}_{r-k}},K^{\ast k}\ast L^{\ast(r-k)})]
$$
for any finite dimensional $\QQ$ representation $\rho$ of ${\mathfrak S}_r$. In particular, if $\rho$ is the standard representation and $1\leq k\leq r-1$, the Littlewood-Richardson formula gives (cf. \cite[4.43, A.8]{fulton1996representation}):
$$
(\wedge^i\rho)_{|{\mathfrak S}_k\times{\mathfrak S}_{r-k}}=\bigoplus_{j+l=i}(\wedge^j\rho_1)\times(\wedge^l\rho_2)\oplus\bigoplus_{j+l=i-1}(\wedge^j\rho_1)\times(\wedge^l\rho_2)
$$
where $\rho_1$ and $\rho_2$ are the standard representations of ${\mathfrak S}_k$ and ${\mathfrak S}_{r-k}$ respectively and $j\leq k-1$, $l\leq r-k-1$ in the sums, so we obtain
$$
M^{[\ast r]}=\sum_{i=0}^{r-1}(-1)^i[\R(\ast\wedge^i\rho)M]=
$$
$$
=\sum_{i=0}^{r-1}(-1)^i\sum_{k=0}^r[{\mathcal Hom}_{{\mathfrak S}_k\times{\mathfrak S}_{r-k}}((\wedge^i\rho)_{|{\mathfrak S}_k\times{\mathfrak S}_{r-k}},K^{\ast k}\ast L^{\ast(r-k)})]=
$$
$$
=\sum_{i=0}^{r-1}(-1)^i[\R(\ast\wedge^i\rho)K]+\sum_{i=0}^{r-1}(-1)^i[\R(\ast\wedge^i\rho)L]+
$$
$$
+\sum_{k=1}^{r-1}\sum_{i=0}^{r-1}(-1)^i\left(\sum_{j+l=i}[{\mathcal Hom}_{{\mathfrak S}_k\times{\mathfrak S}_{r-k}}((\wedge^j\rho_1)\times(\wedge^l\rho_2),K^{\ast k}\ast L^{\ast(r-k)})]+\right.
$$
$$
\left.+\sum_{j+l=i-1}[{\mathcal Hom}_{{\mathfrak S}_k\times{\mathfrak S}_{r-k}}((\wedge^j\rho_1)\times(\wedge^l\rho_2),K^{\ast k}\ast L^{\ast(r-k)})]\right).
$$

The last sum clearly vanishes, so we conclude that
$$
M^{[\ast r]}=\sum_{i=0}^{r-1}(-1)^i[\R(\ast\wedge^i\rho)K]+\sum_{i=0}^{r-1}(-1)^i[\R(\ast\wedge^i\rho)L]=K^{[\ast r]}+L^{[\ast r]}.
$$
\end{proof}

\section{Norm $L$-functions}

We go back to the case where $k=\Fq$ is a finite field. Let $G$ be a geometrically connected commutative group scheme of finite type over $k$. For every positive integers $m,r$ there is a norm map $\Nm_{k_{mr}/k_m}:G(k_{mr})\to G(k_m)$ given by
$$
\Nm_{k_{mr}/k_m}(u)=\prod_{\sigma\in\Gal(k_{mr}/k_m)}\sigma(u).
$$

\begin{defn}
 Let $f\in\CC_G$ and $r\geq 1$ an integer. The \emph{$r$-th norm power} of $f$ is the function $f^{\Nm,r}\in\CC_G$ given by $$f^{\Nm,r}(k_m,t)=\sum_{\Nm_{k_{mr}/k_m}(u)=t}f(k_{mr},u)$$
\end{defn}

The following properties are immediate consequences of the definitions:

\begin{prop}\label{easy}
 Let $f,g\in\CC_G$, $\alpha\in\QQ$ an $\ell$-adic unit of integral $q$-weight and $r\geq 1$ an integer. Then
\begin{enumerate}
 \item $(f+g)^{\Nm,r}=f^{\Nm,r}+g^{\Nm,r}$
\item $(\kappa_\alpha\cdot f)^{\Nm,r}=\kappa_{\alpha^r}\cdot f^{\Nm,r}$
\end{enumerate}
\end{prop}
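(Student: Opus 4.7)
The plan is to verify both identities by unwinding definitions at an arbitrary pair $(k_m,t)$ with $m\geq 1$ and $t\in G(k_m)$. Since both sides of each equation are elements of $\CC_G$, and $\CC_G$ is defined pointwise, it suffices to show equality of values at every such $(k_m,t)$. No derived-category or representation-theoretic input is needed; the proof lives entirely inside the commutative ring $\CC_G$.

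For part (1), I would start from
\[
(f+g)^{\Nm,r}(k_m,t)=\sum_{\Nm_{k_{mr}/k_m}(u)=t}(f+g)(k_{mr},u)
\]
and use the fact that addition in $\CC_G$ is pointwise to split the sum into two pieces, one over $f(k_{mr},u)$ and one over $g(k_{mr},u)$, recognizing each as $f^{\Nm,r}(k_m,t)$ and $g^{\Nm,r}(k_m,t)$ respectively. Both sums are finite (they range over the fiber of the norm map, which is a finite subset of $G(k_{mr})$), so there are no convergence issues.

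For part (2), the key observation is simply that the constant function $\kappa_\alpha$ takes the value $\alpha^{mr}$ at every point of $G(k_{mr})$, so it factors out of the sum defining $(\kappa_\alpha f)^{\Nm,r}(k_m,t)$. Explicitly,
\[
(\kappa_\alpha f)^{\Nm,r}(k_m,t)=\sum_{\Nm_{k_{mr}/k_m}(u)=t}\alpha^{mr}\cdot f(k_{mr},u)=\alpha^{mr}\sum_{\Nm_{k_{mr}/k_m}(u)=t}f(k_{mr},u).
\]
On the other hand, $\kappa_{\alpha^r}(k_m,t)=(\alpha^r)^m=\alpha^{mr}$, so the right-hand side $\kappa_{\alpha^r}(k_m,t)\cdot f^{\Nm,r}(k_m,t)$ produces the same scalar $\alpha^{mr}$ multiplying $f^{\Nm,r}(k_m,t)$.

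There is essentially no obstacle here; the only mildly subtle point is bookkeeping of the exponent in the constant function, namely that evaluating $\kappa_\alpha$ at a point of $k_{mr}$ gives $\alpha^{mr}$, not $\alpha^m$ or $\alpha^r$, and that this is exactly what is needed to match $\kappa_{\alpha^r}$ evaluated at a point of $k_m$. Both identities then follow by pointwise comparison, which is why the author labels the proposition as immediate from the definitions.
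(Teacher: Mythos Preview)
Your proof is correct and matches the paper's approach: the paper simply declares these properties to be ``immediate consequences of the definitions'' without writing out any argument, and your pointwise verification is exactly the unwinding of definitions that justifies this claim. The bookkeeping you highlight for part (2), namely that $\kappa_\alpha$ evaluated in $k_{mr}$ gives $\alpha^{mr}=(\alpha^r)^m=\kappa_{\alpha^r}(k_m,t)$, is the one small point worth making explicit, and you have done so correctly.
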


The goal of this section is to show that $\CC_{G,rep}$ is invariant under these operations. More precisely, the Frobenius trace function of the $r$-th convolution Adams power of $K$ is the $r$-th norm power of the Frobenius trace function of $K$:

\begin{thm}
 For every $K\in K_0(G,\QQ)$ and every $r\geq 1$ we have
$$
\Phi(K^{[\ast r]})=\Phi(K)^{\Nm,r}.
$$
\end{thm}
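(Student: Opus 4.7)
The plan is to reduce to the case where $K$ is represented by a single sheaf, then use character theory to identify $f_{K^{[\ast r]}}(k_m,t)$ with the trace of Frobenius composed with an $r$-cycle on $K^{\ast r}_{\bar t}$, and finally apply Grothendieck's trace formula to this twisted endomorphism. Both sides of the equation are additive in $K$ by Proposition \ref{easy} on the right-hand side and by the preceding proposition (additivity of $K\mapsto K^{[\ast r]}$) together with the fact that $\Phi$ is a homomorphism on the left-hand side. Hence I may assume $K=[\FF]$ for a single sheaf $\FF$. Unfolding the definition, $K^{[\ast r]}=\sum_{i=0}^{r-1}(-1)^i[\R(\ast\wedge^i\rho)K]$ with $\rho$ the standard representation of $\mathfrak{S}_r$, and after applying $\Phi$ I need to compute the Frobenius trace at $t\in G(k_m)$ of this virtual object.

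The first key step is to compute the character of $\tau_r$. Using the identity $\det(1-A)=\sum_{i\ge 0}(-1)^i\Tr(\wedge^iA)$, evaluation at $\sigma\in\mathfrak{S}_r$ gives $\chi_{\tau_r}(\sigma)=\det(1-\sigma\mid\rho)$. Since the characteristic polynomial of $\sigma$ on the permutation representation $\QQ^r=\rho\oplus\mathbf{1}$ factors as $\prod_j(T^{\lambda_j}-1)$ where $\lambda=(\lambda_1,\lambda_2,\ldots)$ is the cycle type of $\sigma$, removing the factor coming from $\mathbf{1}$ and evaluating at $T=1$ yields
\[
\chi_{\tau_r}(\sigma)=\lim_{T\to 1}\frac{\prod_j(T^{\lambda_j}-1)}{T-1}=\begin{cases} r & \text{if }\sigma\text{ is an }r\text{-cycle,}\\ 0 & \text{otherwise.}\end{cases}
\]

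The second key step is to use this character together with the standard orthogonality formula. For any object $M$ with commuting actions of $\mathfrak{S}_r$ and Frobenius (such as $M=K^{\ast r}_{\bar t}$), the decomposition $M=\bigoplus_\lambda V_\lambda\otimes M^{V_\lambda}$ and orthogonality of irreducible characters give
\[
\Tr(\Frob\mid M^{\tau_r})=\frac{1}{r!}\sum_{\sigma\in\mathfrak{S}_r}\chi_{\tau_r}(\sigma)\,\Tr(\Frob\cdot\sigma\mid M).
\]
Because all $r$-cycles are conjugate (so produce the same trace) and there are $(r-1)!$ of them, this simplifies to $\Tr(\Frob\cdot c\mid M)$ for any single $r$-cycle $c$. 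Applied to $M=K^{\ast r}_{\bar t}$, this identifies $f_{K^{[\ast r]}}(k_m,t)$ with $\Tr(\Frob_{k_m}\cdot c\mid K^{\ast r}_{\bar t})$.

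The final step is to apply the Grothendieck--Lefschetz trace formula to the composite endomorphism $\Frob_{k_m}\cdot c$ acting on $K^{\ast r}=\R\mu_!(K^{\boxtimes r})$ restricted to the fiber $\mu^{-1}(\bar t)\subset G^r$. Taking $c=(1\,2\,\cdots\,r)$, a tuple $(u_1,\ldots,u_r)$ is fixed by $\Frob_{k_m}\cdot c$ precisely when $u_{i+1}=\Frob_{k_m}(u_i)$ and $u_1=\Frob_{k_m}(u_r)$, i.e.\ when $u_1\in G(k_{mr})$ and $u_i=\Frob_{k_m}^{i-1}(u_1)$; the constraint $\mu(u_1,\ldots,u_r)=t$ then becomes $\Nm_{k_{mr}/k_m}(u_1)=t$. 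At such a fixed point, the local contribution is the trace of the cyclic endomorphism of the tensor product $K_{\bar u_1}\otimes\cdots\otimes K_{\bar u_r}$ which sends $v_1\otimes\cdots\otimes v_r$ to $\Frob(v_r)\otimes\Frob(v_1)\otimes\cdots\otimes\Frob(v_{r-1})$, and a direct matrix computation shows this trace equals the trace of the composite $\Frob^r$ on the single stalk $K_{\bar u_1}$, namely $\Tr(\Frob_{k_{mr},u_1}\mid K_{\bar u_1})=f_K(k_{mr},u_1)$. Summing over all fixed points recovers exactly $\Phi(K)^{\Nm,r}(k_m,t)$.

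The main obstacle is the book-keeping in the last step: one must justify applying the Lefschetz trace formula to the non-Frobenius endomorphism $\Frob_{k_m}\cdot c$ (which can be done either by regarding it as the Frobenius of a twisted $k_m$-structure on $G^r$, or as a correspondence), and then carry out the local trace computation on the tensor product of stalks, which requires keeping track of how the sheaf Frobenius intertwines the various stalks along the cyclic Frobenius orbit of $u_1$.
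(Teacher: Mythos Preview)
Your argument is correct, but it follows a genuinely different route from the paper's.

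The paper's proof avoids the twisted-Frobenius Lefschetz formula entirely. After reducing to $m=1$, it uses finite abelian Fourier inversion on $G(k)$: two functions on $G(k)$ agree if and only if, for every character $\chi:G(k)\to\QQ^\star$, their sums against $\chi$ agree. Using the compatibility $(K\otimes\LL_\chi)^{[\ast r]}=K^{[\ast r]}\otimes\LL_\chi$ (equation~(\ref{tensorchi})) and the ordinary Grothendieck trace formula, the $\chi$-sum of the left-hand side becomes $\Tr(\Frob_k\mid\R\Gamma_c(G_{\bar k},(K\otimes\LL_\chi)^{[\ast r]}))$, while the $\chi$-sum of the right-hand side becomes $\Tr(\Frob_k^r\mid\R\Gamma_c(G_{\bar k},K\otimes\LL_\chi))=\Tr(\Frob_k\mid\R\Gamma_c(G_{\bar k},K\otimes\LL_\chi)^{[r]})$. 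These agree by equation~(\ref{adamscommute}), which says $\R a_!$ takes convolution Adams powers to ordinary Adams powers.

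Your approach is the classical Deligne--Lusztig one: compute the character of $\tau_r$ directly (supported on $r$-cycles with value $r$), rewrite $\Tr(\Frob\mid(K^{\ast r})^{\tau_r}_{\bar t})$ as $\Tr(\Frob\cdot c\mid (K^{\ast r})_{\bar t})$, and apply Lefschetz for $\Frob\cdot c$ on the fibre $\mu^{-1}(t)$. This is more hands-on and makes the link to the norm map transparent geometrically (the fixed locus of $\Frob\cdot c$ on $G^r$ is exactly the Weil restriction $\mathrm{Res}_{k_{mr}/k_m}G$, and $\mu$ becomes the norm). The cost, which you correctly flag, is justifying Lefschetz for the non-pure Frobenius $\Frob\cdot c$; this is standard (twist the $k_m$-structure by the cocycle $\Frob\mapsto c$, or invoke SGA~4\textonehalf, Rapport, for correspondences that factor through Frobenius), and your local trace computation on the cyclic tensor product is the usual linear-algebra lemma. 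The paper's approach trades this for the already-proved Proposition~\ref{mainprop} plus the elementary identity $\Tr(\phi^r\mid V)=\Tr(\phi\mid V^{[r]})$, and needs no trace formula beyond the untwisted one.
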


Let $\chi:G(k)\to \QQ^\star$ be a character. By \cite[1.4-1.8]{deligne569application} there is a rank $1$ smooth $\QQ$-sheaf $\LL_\chi$ on $G$ such that, for every $m\geq 1$ and every $t\in G(k_m)$,
$$
\Tr(\Frob_{k_m,t}|\LL_{\chi,\bar t})=\chi(\Nm_{k_m/k}(t)).
$$
Tensoring with $\LL_\chi$ is an autoequivalence of the triangulated category $\Dbc(G,\QQ)$. By \cite[8.1.10]{katz1990esa} there is a quasi-isomorphism $K^{\ast r}\otimes\LL_\chi\cong(K\otimes\LL_\chi)^{\ast r}$ for every $r\geq 1$, which is compatible with the natural ${\mathfrak S}_r$ actions. In particular, for every (virtual) finite dimensional $\QQ$-representation $\rho$ of ${\mathfrak S}_r$ we have
$$
(\R(\ast\rho)K)\otimes\LL_\chi\cong\R(\ast\rho)(K\otimes\LL_\chi).
$$
Taking $\rho=\tau_r$, we get
\begin{equation}\label{tensorchi}
 K^{[\ast r]}\otimes\LL_\chi=(K\otimes\LL_\chi)^{[\ast r]}
\end{equation}
in $K_0(G,\QQ)$ for every $r\geq 1$.

\begin{proof}[Proof of theorem 4.3]
 We have to show that, for every $m\geq 1$ and every $t\in G(k_m)$,
$$
\Phi(K^{[\ast r]})(k_m,t)=\sum_{\Nm_{k_{mr}/k_m}(u)=t}\Phi(K)(k_{mr},u).
$$
Since the operation $K\mapsto K^{[\ast r]}$ commutes with extending scalars to a finite extension of $k$, we may assume without loss of generality that $m=1$. The (ordinary finite group) Fourier transform gives a bijection between the set of $\QQ$-valued maps defined on $G(k)$ and the set of $\QQ$-valued maps defined on the set of characters of $G(k)$, so the previous equality for every $t\in G(k)$ is equivalent to the equality
$$
\sum_{t\in G(k)}\chi(t)\Phi(K^{[\ast r]})(k,t)=\sum_{t\in G(k)}\chi(t)\sum_{\Nm_{k_{r}/k}(u)=t}\Phi(K)(k_r,u).
$$
for every character $\chi:G(k)\to\QQ^\star$. The left hand side is, by the Grothendieck-Lefschetz trace formula,
$$
\sum_{t\in G(k)}\Phi(\LL_\chi)(k,t)\Phi(K^{[\ast r]})(k,t)=\sum_{t\in G(k)}\Phi(K^{[\ast r]}\otimes\LL_\chi)(k,t)=
$$
$$
=\sum_{t\in G(k)}\Phi((K\otimes\LL_\chi)^{[\ast r]})(k,t)=\Tr(\Frob_k|\R\Gamma_c(G\otimes\bar k,(K\otimes\LL_\chi)^{[\ast r]})).
$$

The right hand side is, again by the trace formula,
$$
\sum_{u\in G(k_r)}\chi(\Nm_{k_r/k}(u))\Phi(K)(k_r,u)=\sum_{u\in G(k_r)}\Phi(K\otimes\LL_\chi)(k_r,u)=
$$
$$
=\Tr(\Frob_{k_r}|\R\Gamma_c(G\otimes\bar k,K\otimes\LL_\chi))=\Tr(\Frob_k^r|\R\Gamma_c(G\otimes\bar k,K\otimes\LL_\chi))=
$$
$$
=\Tr(\Frob_k|\R\Gamma_c(G\otimes\bar k,K\otimes\LL_\chi)^{[r]})
$$
since $\Tr(\phi|V^{[r]})=\Tr(\phi^r|V)$ for any endomorphism $\phi$ of a vector space $V$ (cf. \cite[Theorem 1.1]{fu2004moment}). The result follows then from equation (\ref{adamscommute}) applied to $\tau=\tau_r$, which tells us that the virtual $\Frob_k$-modules $\R\Gamma_c(G\otimes\bar k,(K\otimes\LL_\chi)^{[\ast r]})$ and $\R\Gamma_c(G\otimes\bar k,K\otimes\LL_\chi)^{[r]}$ are isomorphic.
\end{proof}

\begin{cor}\label{maincor}
 For every $K\in K_0(G,\QQ)$, every $r\geq 1$ and every $t\in G(k_r)$ the $r$-th norm $L$-function of $K$ at $t$ $L^{\Nm,r}(K,k_m,t;T)$ is rational and all its reciprocal roots and poles have integral $q^m$-weight.
\end{cor}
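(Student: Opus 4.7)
The plan is to deduce this corollary directly from Theorem 4.3 together with the general rationality result for $L$-functions of $\QQ$-representable functions (the last Proposition of section 2).

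First I would observe that, by definition,
$$
L^{\Nm,r}(K,k_m,t;T)=\exp\sum_{s\geq 1}f^{\Nm,r}_K(k_{ms},t)\frac{T^s}{s}=L(f^{\Nm,r}_K,k_m,t;T),
$$
so it suffices to exhibit $f^{\Nm,r}_K=\Phi(K)^{\Nm,r}$ as a $\QQ$-representable function on $G$ and then invoke the section 2 Proposition. Theorem 4.3 is exactly this: it identifies $\Phi(K)^{\Nm,r}$ with $\Phi(K^{[\ast r]})$, where the convolution Adams power $K^{[\ast r]}\in K_0(G,\QQ)$ is the element constructed in section 3. Thus $f^{\Nm,r}_K\in\CC_{G,rep}$, represented by $K^{[\ast r]}$.

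Applying the section 2 Proposition to the class $K^{[\ast r]}\in K_0(G,\QQ)$, additivity reduces the computation of $L(f^{\Nm,r}_K,k_m,t;T)$ to the case where the representing element is an honest sheaf $\FF\in\Sha$, for which the $L$-function equals $\det(1-T\cdot\Frob_{k_m,t}|\FF_{\bar t})^{\pm 1}$ and is thus rational. For the weight statement I would use that $K^{[\ast r]}$, being a $\Z$-linear combination in $K_0(G,\QQ)$ of the (mixed, integral $q$-weight) objects $\R(\ast\rho)K$ for various $\rho$, is itself mixed of integral $q$-weights; hence each reciprocal root or pole of the characteristic polynomial of $\Frob_{k_m,t}$ on its stalks is pure of integral $q^m$-weight, and these are the reciprocal roots and poles of $L^{\Nm,r}(K,k_m,t;T)$.

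There is really no main obstacle here: all the work has been carried out in Theorem 4.3 and in the preparatory results of section 2. The only thing to be careful about is matching the definitions—that $L^{\Nm,r}(K,k_m,t;T)$ is literally the $L$-function (in the sense of section 2) of the function $f^{\Nm,r}_K$ at the point $t\in G(k_m)$—and that the mixedness and integrality of weights pass through the convolution Adams power construction, which is clear from Propositions 3.2, 3.3 and the construction of $\tau_r$ out of exterior powers of the standard representation.
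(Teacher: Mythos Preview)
Your proposal is correct and follows exactly the approach implicit in the paper: the corollary is stated without proof immediately after Theorem 4.3 because it is a direct consequence of that theorem together with the last Proposition of section 2. Your unpacking of the argument---identifying $L^{\Nm,r}(K,k_m,t;T)$ with $L(\Phi(K^{[\ast r]}),k_m,t;T)$ via Theorem 4.3 and then invoking the rationality and weight statement for $\QQ$-representable functions---is precisely what the paper intends.
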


Using proposition \ref{easy} and the injectivity of $\Phi$ we deduce

\begin{cor}\label{twist}
 For every $\ell$-adic unit $\alpha$ of integral $q$-weight we have
$$
(\alpha^{deg}\otimes K)^{[\ast r]}=\alpha^{r\cdot deg}\otimes K^{[\ast r]}.
$$
\end{cor}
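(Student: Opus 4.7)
The plan is to reduce the identity to one of Frobenius trace functions and invoke the injectivity of the homomorphism $\Phi:K_0(G,\QQ)\to\CC_G$ mentioned in Section 2. Since both sides of the claimed equality are elements of $K_0(G,\QQ)$, it suffices to check that they are sent to the same function in $\CC_G$ by $\Phi$.

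First I would compute $\Phi$ of the left-hand side. By Theorem 4.3 applied to $\alpha^{\deg}\otimes K$, we have $\Phi((\alpha^{\deg}\otimes K)^{[\ast r]})=\Phi(\alpha^{\deg}\otimes K)^{\Nm,r}$. Since $\Phi$ is a ring homomorphism and the trace function of the geometrically constant sheaf $\alpha^{\deg}$ is by definition the constant function $\kappa_\alpha$, we have $\Phi(\alpha^{\deg}\otimes K)=\kappa_\alpha\cdot\Phi(K)$. Then part (2) of Proposition \ref{easy} gives
$$
(\kappa_\alpha\cdot\Phi(K))^{\Nm,r}=\kappa_{\alpha^r}\cdot\Phi(K)^{\Nm,r}.
$$

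Next I would compute $\Phi$ of the right-hand side. Again using that $\Phi$ is a ring homomorphism and Theorem 4.3,
$$
\Phi(\alpha^{r\cdot\deg}\otimes K^{[\ast r]})=\kappa_{\alpha^r}\cdot\Phi(K^{[\ast r]})=\kappa_{\alpha^r}\cdot\Phi(K)^{\Nm,r}.
$$
The two computations agree, so $\Phi$ of both sides coincide, and injectivity of $\Phi$ (\cite[Th\'eor\`eme 1.1.2]{laumon1987transformation}) yields the equality in $K_0(G,\QQ)$.

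There is no real obstacle here: the whole content of the corollary has already been absorbed into Theorem 4.3 and Proposition \ref{easy}; only bookkeeping on the multiplicative behaviour of constant functions under the norm power is needed.
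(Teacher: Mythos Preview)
Your proof is correct and follows exactly the approach indicated in the paper, which simply states that the corollary follows from Proposition \ref{easy} and the injectivity of $\Phi$ without spelling out the details. You have merely made explicit the bookkeeping the paper leaves implicit.
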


We conclude this section with a useful formula relating the usual Adams powers and the convolution Adams powers in the case where $G$ is the additive group $\AAA^n_k$. Fix a non-trivial character $\psi:k\to\QQ^\star$. Recall that the \emph{Fourier transform} with respect to $\psi$ is the functor $\Dbc(\AAA^n_k,\QQ)\to\Dbc(\AAA^n_k,\QQ)$ given by (cf. \cite{katz1985transformation}):
$$
\FT_\psi(K)=\R\pi_{2!}(\pi_1^\star K\otimes\mu^\star \LL_\psi)[n]
$$
where $\pi_1,\pi_2:\AAA^n_k\times\AAA^n_k\to\AAA^n_k$ are the projections, $\mu:\AAA^n_k\times\AAA^n_k\to\AAA^1_k$ is given by $\mu((x_1,\ldots,x_n),(y_1,\ldots,y_n))=x_1y_1+\cdots+x_ny_n$ and $\LL_{\psi}$ is the Artin-Schreier smooth sheaf on $\AAA^1_k$ associated to $\psi$. It is an autoequivalence of the triangulated category $\Dbc(\AAA^n_k,\QQ)$. In particular, any action of a finite group $H$ on an object $K\in\Dbc(\AAA^n_k,\QQ)$ induces an action on $\FT_\psi(K)$.

\begin{lem}\label{lemmafourier}
 Let $K\in\Dbc(\AAA^n_k,\QQ)$ be an object with an action of the finite group $H$ and $\rho:H\to\GL(V)$ a finite dimensional representation of $H$. Then $(\FT_\psi(K))^\rho=\FT_\psi(K^\rho)$.
\end{lem}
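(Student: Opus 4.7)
The plan is to unwind the definition of $\FT_\psi$ and show that each constituent operation commutes with the functor $(-)^\rho = \HHH om_H(V,-)$. Recall
$$
\FT_\psi(K) = \R\pi_{2!}(\pi_1^\star K \otimes \mu^\star \LL_\psi)[n].
$$
The $H$-action on $K$ transports through this construction: it pulls back to give an $H$-action on $\pi_1^\star K$; the sheaf $\mu^\star\LL_\psi$ carries the trivial $H$-action; the tensor product is endowed with the diagonal action (which, because the second factor is trivial, is simply the action on the first factor); and $\R\pi_{2!}$ of an $H$-equivariant object is canonically $H$-equivariant.

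First I would check that $(-)^\rho$ commutes with pullback: for the flat morphism $\pi_1$, the stalk of $\pi_1^\star K$ at a geometric point $\bar z = (\bar x,\bar y)$ is $K_{\bar x}$ as an $H$-module, so
$$
(\pi_1^\star K)^\rho_{\bar z} = \Hom_H(V, K_{\bar x}) = (K^\rho)_{\bar x} = (\pi_1^\star K^\rho)_{\bar z},
$$
giving a natural isomorphism $(\pi_1^\star K)^\rho \cong \pi_1^\star(K^\rho)$. Next I would observe that tensoring with an object carrying the trivial $H$-action commutes with $(-)^\rho$: if $H$ acts trivially on $L$ then $\Hom_H(V, K\otimes L) = \Hom_H(V,K)\otimes L$ since $L$ factors out of the $H$-equivariance condition. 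Applied to $L = \mu^\star\LL_\psi$, this gives
$$
(\pi_1^\star K \otimes \mu^\star\LL_\psi)^\rho \cong \pi_1^\star(K^\rho) \otimes \mu^\star\LL_\psi.
$$

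Then I would invoke Lemma \ref{directimage} applied to $f = \pi_2$, which yields
$$
\R\pi_{2!}\bigl((\pi_1^\star K \otimes \mu^\star\LL_\psi)^\rho\bigr) \cong \bigl(\R\pi_{2!}(\pi_1^\star K \otimes \mu^\star\LL_\psi)\bigr)^\rho.
$$
Since the shift $[n]$ is an exact functor that trivially commutes with $(-)^\rho$, combining these isomorphisms gives
$$
\FT_\psi(K^\rho) = \R\pi_{2!}(\pi_1^\star K^\rho \otimes \mu^\star\LL_\psi)[n] \cong \bigl(\R\pi_{2!}(\pi_1^\star K \otimes \mu^\star\LL_\psi)[n]\bigr)^\rho = (\FT_\psi(K))^\rho.
$$

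The only slightly delicate point is the bookkeeping for the $H$-equivariant structures at each step, ensuring that the natural isomorphisms exhibited are themselves $H$-equivariant (in particular for the pullback and for the projection formula used to factor $L$ out of $\Hom_H$). Once those are established, the statement reduces to Lemma \ref{directimage}, so there is no real obstacle beyond this formal verification.
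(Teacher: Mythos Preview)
Your proof is correct and follows essentially the same approach as the paper: decompose $\FT_\psi$ into pullback, tensor with $\mu^\star\LL_\psi$, $\R\pi_{2!}$, and shift, then verify each commutes with $(-)^\rho$, invoking Lemma \ref{directimage} for the pushforward step. The only cosmetic differences are that the paper justifies $(\pi_1^\star K)^\rho \cong \pi_1^\star(K^\rho)$ via the exactness of $\pi_1^\star$ and sheaf-$\HHH om$ manipulations rather than stalks, and it phrases the tensor step as ``$\mu^\star\LL_\psi$ is smooth of rank $1$'' rather than as a general fact about trivially-acting objects.
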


\begin{proof}
 By lemma \ref{directimage} we have
$$
(\FT_\psi(K))^\rho=\R\pi_{2!}(\pi_1^\star K\otimes\mu^\star \LL_\psi)^\rho[n]=\R\pi_{2!}((\pi_1^\star K\otimes\mu^\star \LL_\psi)^\rho)[n]=
$$
$$
=\R\pi_{2!}((\pi_1^\star K)^\rho\otimes\mu^\star \LL_\psi)[n]
$$
since $\mu^\star\LL_\psi$ is smooth of rank $1$ on $\AAA^n_k\times\AAA^n_k$. On the other hand,
$$
(\pi_1^\star K)^\rho={\mathcal Hom}_\rho(V,\pi_1^\star K)={\mathcal Hom}(\pi_1^\star V,\pi_1^\star K)^H=
$$
$$
=(\pi_1^\star{\mathcal Hom}(V,K))^H=\pi_1^\star({\mathcal Hom}(V,K)^H)=\pi_1^\star(K^\rho)
$$
since $\pi_1^\star$ is exact. We conclude that
$$
(\FT_\psi(K))^\rho=\R\pi_{2!}(\pi_1^\star (K^\rho)\otimes\mu^\star \LL_\psi)[n]=\FT_\psi(K^\rho).
$$
\end{proof}

\begin{prop}\label{fourier}
 For every $K\in\Dbc(\AAA^n_k,\QQ)$, every integer $r\geq 1$ and every finite dimensional $\QQ$-representation $\rho$ of ${\mathfrak S}_r$ there is a quasi-isomorphism
$$
\FT_\psi(\R(\ast\rho)K)\cong\R(\rho\otimes\sigma^{n})\FT_\psi(K)[-n(r-1)].
$$
where $\sigma$ is the sign character of ${\mathfrak S}_r$. That is, the Fourier transform interchanges the operations $\R(\ast\rho)$ and $\R(\rho\otimes\sigma^{n})$ (up to a shift).
\end{prop}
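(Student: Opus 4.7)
The plan is to combine the standard fact that the Fourier transform $\FT_\psi$ intertwines $!$-convolution with tensor product (up to a shift by $[-n]$ per convolution step) with Lemma \ref{lemmafourier}, which guarantees that $\FT_\psi$ commutes with extraction of $\rho$-isotypic components. The twist by $\sigma^n$ in the conclusion will emerge from a Koszul sign analysis of how the $n$-fold shift interacts with the permutation action on the $r$-fold tensor power, in exactly the style of Proposition \ref{shift}.

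First, I would invoke the standard Fourier transform of convolution isomorphism $\FT_\psi(K \ast L) \cong \FT_\psi(K) \otimes \FT_\psi(L)[-n]$ (see \cite{katz1985transformation}) and iterate it. Since each step introduces a shift $[-n]$, we obtain a natural isomorphism
\[
\FT_\psi(K^{\ast r}) \cong \FT_\psi(K)^{\otimes r}[-n(r-1)].
\]
To make the ${\mathfrak S}_r$-equivariance manifest I would rewrite this symmetrically as
\[
\FT_\psi(K^{\ast r}) \cong \bigl(\FT_\psi(K)[-n]\bigr)^{\otimes r}[n],
\]
so that the standard permutation action on $K^{\ast r}$ transports, via functoriality of $\FT_\psi$ and the iterated convolution formula, to the standard permutation action on $(\FT_\psi(K)[-n])^{\otimes r}$, together with the overall shift by $[n]$.

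Applying Lemma \ref{lemmafourier} then gives
\[
\FT_\psi(\R(\ast\rho)K) = \bigl(\FT_\psi(K^{\ast r})\bigr)^\rho \cong \bigl((\FT_\psi(K)[-n])^{\otimes r}\bigr)^\rho[n].
\]
To finish, I would identify $((L[-n])^{\otimes r})^\rho$ with $\R(\rho\otimes\sigma^n)(L)[-nr]$ for $L=\FT_\psi(K)$: by iterating the Koszul sign computation from the proof of Proposition \ref{shift} $n$ times, the canonical isomorphism $(L[-n])^{\otimes r} \cong L^{\otimes r}[-nr]$ transports the standard permutation action on the left to the permutation action on $L^{\otimes r}$ twisted by $\sigma^n$ on the right. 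Using the identity $\Hom_{{\mathfrak S}_r}(\rho, V_{\mathrm{perm}\otimes \sigma^n}) = \Hom_{{\mathfrak S}_r}(\rho\otimes\sigma^n, V_{\mathrm{perm}})$ and reattaching the outer shift $[n]$ yields the claimed formula.

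The main obstacle is keeping track of ${\mathfrak S}_r$-equivariance through the iterated Fourier--convolution identification: naively, the shifts $[-n]$ are distributed asymmetrically on only $r-1$ of the factors, and one must carefully symmetrize (putting one shift on each factor and adjusting the global shift by $[n]$) so that the permutation symmetry becomes manifest. After this symmetrization, the Koszul sign argument from the proof of Proposition \ref{shift}, applied iteratively $n$ times, cleanly produces the twist by $\sigma^n$.
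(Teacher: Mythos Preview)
Your proposal is correct and follows essentially the same route as the paper: iterate the Fourier--convolution identity to obtain $\FT_\psi(K^{\ast r})\cong \FT_\psi(K)^{\otimes r}[-n(r-1)]$, identify the discrepancy in the ${\mathfrak S}_r$-actions as a twist by $\sigma^n$ via the Koszul sign argument of Proposition~\ref{shift} applied $n$ times, and then invoke Lemma~\ref{lemmafourier}. The only cosmetic difference is that the paper verifies the $\sigma^n$-twist by computing stalks directly (writing both sides as $\R\Gamma_c(\AAA^n_{\bar k},K\otimes\LL_{\psi_t})^{\otimes r}[rn]$ versus $\R\Gamma_c(\AAA^n_{\bar k},K\otimes\LL_{\psi_t})[n]^{\otimes r}$), whereas you achieve the same effect by first symmetrizing the iterated isomorphism as $(\FT_\psi(K)[-n])^{\otimes r}[n]$; these are two ways of packaging the same computation.
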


\begin{proof}
 By \cite[Corollaire 9.6]{brylinski1986transformations}, for every $K,L\in\AAA^n_k$ we have the formula
$$
\FT_\psi(K\ast L)\cong\FT_\psi(K)\otimes\FT_\psi(L)[-n]
$$
and, in particular, there is a natural quasi-isomorphism
$$
\FT_\psi(K^{\ast r})\cong\FT_\psi(K)^{\otimes r}[-(r-1)n]
$$
for every $r\geq 1$. The natural ${\mathfrak S}_r$-actions on $\FT_\psi(K^{\ast r})[(r-1)n]$ and $\FT_\psi(K)^{\otimes r}$ differ by a twist by the $n$-th power of the sign character $\sigma$ (at a geometric point $\bar t$ over $t=(t_1,\ldots,t_n)\in k_m^n$ the stalks are $\R\Gamma_c(\AAA^n_{\bar k},K^{\ast r}\otimes\LL_{\psi_t})[rn]\cong\R\Gamma_c(\AAA^n_{\bar k},(K\otimes\LL_{\psi_t})^{\ast r})[rn]\cong\R\Gamma_c(\AAA^n_{\bar k},K\otimes\LL_{\psi_t})^{\otimes r}[rn]$ and $\R\Gamma_c(\AAA^n_{\bar k},K\otimes\LL_{\psi_t})[n]^{\otimes r}$ respectively, where $\LL_{\psi_t}$ is the rank $1$ smooth sheaf corresponding to the character of $k_m^n$ $x\mapsto \psi(\Tr_k(t_1x_1+\cdots+t_nx_n))$, and the proof of proposition \ref{shift} applied $n$ times shows that the actions of $\mathfrak S_r$ on these differ by a twist by $\sigma^n$). For any finite dimensional $\QQ$-representation $\rho$ of ${\mathfrak S}_r$ we have then
$$
\FT_\psi(\R(\ast\rho)K)=\FT_\psi((K^{\ast r})^\rho)=\FT_\psi(K^{\ast r})^\rho\cong(\FT_\psi(K)^{\otimes r}[-(r-1)n])^{\rho\otimes\sigma^n}=
$$
$$
=(\FT_\psi(K)^{\otimes r})^{\rho\otimes\sigma^{n}}[-(r-1)n]=\R(\rho\otimes\sigma^{n})\FT_\psi(K)[-n(r-1)]
$$
by lemma \ref{lemmafourier}.
\end{proof}

\begin{cor}\label{fourieradams}
 For every $K\in\Dbc(\AAA^n_k,\QQ)$ and every integer $r\geq 1$ we have
$$
\FT_\psi(K^{[\ast r]})=(\FT_\psi K)^{[r]}.
$$
\end{cor}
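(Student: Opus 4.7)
The plan is to expand $K^{[\ast r]} = \R(\ast\tau_r)K$ with $\tau_r = \sum_{i=0}^{r-1}(-1)^i\wedge^i\rho$ and apply Proposition \ref{fourier} termwise. By $\QQ$-linearity of the construction $\rho \mapsto \R(\ast\rho)K$ in the representation (which is built into the definition of $\R(\ast\tau)K$ for virtual $\tau$), I get, in $K_0(\AAA^n_k,\QQ)$,
$$
\FT_\psi(K^{[\ast r]}) = \sum_{i=0}^{r-1}(-1)^i\bigl[\R((\wedge^i\rho)\otimes\sigma^n)\FT_\psi(K)[-n(r-1)]\bigr] = (-1)^{n(r-1)}\R(\tau_r\otimes\sigma^n)\FT_\psi(K),
$$
using that a shift by $[-n(r-1)]$ multiplies classes in the Grothendieck group by $(-1)^{n(r-1)}$. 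The remaining task is to identify $(-1)^{n(r-1)}\R(\tau_r\otimes\sigma^n)\FT_\psi(K)$ with $\R(\tau_r)\FT_\psi(K) = (\FT_\psi K)^{[r]}$, which reduces to proving the representation-ring identity
$$
\tau_r\otimes\sigma^n = (-1)^{n(r-1)}\tau_r
$$
so that the two sign contributions cancel.

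For this identity I would establish $\wedge^i\rho\otimes\sigma \cong \wedge^{r-1-i}\rho$ for all $0\leq i\leq r-1$. Indeed, $\wedge^i\rho$ is the irreducible $\mathfrak S_r$-representation indexed by the hook partition $(r-i,1^i)$ (as noted in the proof of Proposition \ref{young}), and tensoring an irreducible representation of the symmetric group with the sign character transposes the associated Young diagram. The transpose of the hook $(r-i,1^i)$ is the hook $(i+1,1^{r-1-i})$, which indexes $\wedge^{r-1-i}\rho$. Reindexing $j = r-1-i$ in $\tau_r\otimes\sigma = \sum_i (-1)^i\wedge^{r-1-i}\rho$ then yields $(-1)^{r-1}\tau_r$, and iterating $n$ times gives $\tau_r\otimes\sigma^n = (-1)^{n(r-1)}\tau_r$.

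The only real subtlety is bookkeeping the two independent sources of signs—the cohomological shift by $-n(r-1)$ coming from Proposition \ref{fourier}, and the representation-theoretic twist by $\sigma^n$—and observing that they arise with exactly the same exponent so as to cancel for every parity of $n$ and $r$. For $n$ even both effects are trivial and the result is immediate; the content is in the case $n$ odd, where the hook-transposition identity carries the argument.
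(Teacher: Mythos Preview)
Your argument is correct and matches the paper's proof essentially line for line: both apply Proposition~\ref{fourier} to each $\wedge^i\rho$, pick up the sign $(-1)^{n(r-1)}$ from the shift, and then use $(\wedge^i\rho)\otimes\sigma\cong\wedge^{r-1-i}\rho$ to reindex. The paper handles the cases $n$ even and $n$ odd separately rather than packaging the computation as the single representation-ring identity $\tau_r\otimes\sigma=(-1)^{r-1}\tau_r$, but this is purely cosmetic.
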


\begin{proof}
 Let $\rho$ be the standard representation of ${\mathfrak S}_r$. By the proposition, we have
$$
\FT_\psi(K^{[\ast r]})=\sum_{i=0}^{r-1}(-1)^i[\FT_\psi(\R(\ast\wedge^i\rho)K)]=
$$
$$
=(-1)^{n(r-1)}\sum_{i=0}^{r-1}(-1)^i[\R((\wedge^i\rho)\otimes\sigma^{n})\FT_\psi K].
$$
If $n$ is even this proves the statement. If $n$ is odd, then $\sigma^{n}=\sigma$ and $(\wedge^i\rho)\otimes\sigma=\wedge^{r-1-i}\rho$ so
$$
\FT_\psi(K^{[\ast r]})=(-1)^{r-1}\sum_{i=0}^{r-1}(-1)^i[\R(\wedge^{r-1-i}\rho)\FT_\psi K]=
$$
$$
=\sum_{i=0}^{r-1}(-1)^{r-1-i}[\R(\wedge^{r-1-i}\rho)\FT_\psi K]=\sum_{i=0}^{r-1}(-1)^{i}[\R(\wedge^{i}\rho)\FT_\psi K]=(\FT_\psi K)^{[r]}.
$$
\end{proof}

\section{The dimension 1 case}

From now on we will assume that $G$ is affine of dimension $1$ (so $G\otimes\bar k$ is either the affine line $\AAA^1_{\bar k}$ or the torus $\GG_{m,\bar k}$). We will describe more precisely the operation $K\mapsto K^{[\ast r]}$ in this situation by splitting $K$ into its perverse cohomology sheaves. Since $G$ is a smooth curve, perverse sheaves have an easy description \cite[5.2.2]{beilinson1982faisceaux}: they are objects $\PPP\in\Dbc(G,\QQ)$ which have non-zero cohomology only in degrees $0$ and $-1$, $\HHH^0(\PPP)$ is punctual (that is, $j^\star\HHH^0(\PPP)=0$ for some dense open set $j:U\hookrightarrow G$) and $\HHH^{-1}(\PPP)$ has no punctual sections (that is, the adjunction map $\HHH^{-1}(\PPP)\to j_\star j^\star\HHH^{-1}(\PPP)$ is injective for any dense open set $j:U\hookrightarrow G$). The full subcategory $\Perv(G,\QQ)\subset\Dbc(G,\QQ)$ of perverse sheaves on $G$ is an abelian category in which exact sequences are just distinguished triangles. Irreducible objects in this category are of two types: punctual objects $i_{x\star}\FF[0]$ where $i_x:\{x\}\hookrightarrow G$ is the inclusion of a closed point, and middle extensions $j_{\star !}(\FF[1])\cong (j_\star\FF)[1]$ where $j:U\hookrightarrow G$ is the inclusion of a dense open subset and $\FF$ is an irreducible smooth $\QQ$-sheaf on $U$. If $G=\AAA^1_k$, the Fourier transform functor (with respect to any non-trivial character $\psi:k\to\QQ^\star$) preserves perverse objects and is an autoequivalence of the category of perverse sheaves \cite[Corollaire 2.1.5]{katz1985transformation}.

 The derived category of the category of perverse sheaves is again $\Dbc(G,\QQ)$. In particular, its Grothendieck group is $K_0(G,\QQ)$. Therefore, by additivity it suffices to study $\PPP^{[\ast r]}$ for $\PPP$ an irreducible perverse sheaf. Since we are assuming that everything is mixed of integral $q$-weights, such a perverse sheaf is pure of some integral $q$-weight \cite[Corollaire 5.3.4]{beilinson1982faisceaux} and, in particular, geometrically semisimple \cite[Th\'eor\`eme 5.3.8]{beilinson1982faisceaux}. By corollary \ref{twist} we can further assume that it is pure of weight $1$.

\begin{lem}\label{negligible}
 Let ${\mathcal P}\in\Perv(G,\QQ)$ be irreducible. The following conditions are equivalent:
\begin{enumerate}
 \item $\PPP\otimes\bar k$ contains a sub-object isomorphic to $\LL_\chi[1]$ for some $r\geq 1$ and some character $\chi:G(k_r)\to\QQ^\star$ (cf. \cite[1.4-1.8]{deligne569application}).
 \item ${\mathcal P}\otimes\bar k$ is a direct sum of objects of the form $\LL_{\chi_i}[1]$ for some $r\geq 1$ and some characters $\chi_i:G(k_r)\to\QQ^\star$.
\end{enumerate}
In that case, if $r$ is the smallest positive integer such that $\LL_\chi$ is defined over $k_r$, then $\PPP\cong\alpha^{deg}\otimes\pi_{r\star}\LL_\chi[1]$ for some $\ell$-adic unit $\alpha$, where $\pi_r:G\otimes k_r\to G$ is the projection.
\end{lem}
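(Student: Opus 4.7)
The plan is as follows. The implication $(2)\Rightarrow(1)$ is trivial, so I would concentrate on $(1)\Rightarrow(2)$ and the final identification. First I would observe that $\PPP$ cannot be punctual, since punctual irreducible perverse sheaves live in cohomological degree $0$ whereas $\LL_\chi[1]$ sits in degree $-1$; hence $\PPP\cong(j_\star\FF)[1]$ for an irreducible smooth $\QQ$-sheaf $\FF$ on some dense open $j\colon U\hookrightarrow G$. The standing mixedness hypothesis together with irreducibility makes $\PPP$ pure (\cite[Corollaire 5.3.4]{beilinson1982faisceaux}) and hence geometrically semisimple (\cite[Th\'eor\`eme 5.3.8]{beilinson1982faisceaux}), so I can write $\PPP\otimes\bar k\cong\bigoplus_i\mathcal Q_i^{\oplus n_i}$ with the $\mathcal Q_i$ pairwise non-isomorphic irreducible perverse sheaves; hypothesis $(1)$ together with semisimplicity identifies some $\mathcal Q_{i_0}$ with $\LL_\chi[1]$.

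The key step is a Galois-theoretic analysis of this decomposition. Frobenius permutes the isomorphism classes of the $\mathcal Q_i$ with multiplicities constant along orbits. Since $\QQ=\bar{\mathbb Q}_\ell$ is algebraically closed, every finite-dimensional division algebra over $\QQ$ is $\QQ$ itself, so irreducibility of $\PPP$ over $k$ forces $\mathrm{End}_k(\PPP)=\QQ$. Taking Galois invariants of $\mathrm{End}_{\bar k}(\PPP\otimes\bar k)\cong\prod_i M_{n_i}(\QQ)$ and demanding that the result equal $\QQ$ then forces a single Frobenius orbit with all $n_i=1$. Since Frobenius acts on characters of $G(k_s)$ by $\chi\mapsto\chi^q$ and $\Frob^\star\LL_\chi\cong\LL_{\chi^q}$, the orbit of $\LL_\chi[1]$ is exactly $\{\LL_{\chi^{q^j}}[1]:0\le j<r\}$, where $r$ is the smallest positive integer with $\chi^{q^r}=\chi$, equivalently the smallest integer for which $\LL_\chi$ is defined over $k_r$. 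This proves $(2)$.

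For the final identification, I would observe that $\pi_{r\star}\LL_\chi$ is a smooth rank-$r$ sheaf on $G$ whose geometric pullback is $\bigoplus_{j=0}^{r-1}\LL_{\chi^{q^j}}$; the transitive Frobenius action on these summands makes $\pi_{r\star}\LL_\chi[1]$ an irreducible perverse sheaf over $k$ with the same geometric pullback as $\PPP$. Thus $\PPP$ and $\pi_{r\star}\LL_\chi[1]$ are two $k$-forms of the same irreducible geometric object $\mathcal M$; since $\mathrm{Aut}(\mathcal M)=\QQ^\star$ by Schur's lemma, their two Frobenius structures differ by multiplication by some $\ell$-adic unit $\alpha$, which produces the desired isomorphism $\PPP\cong\alpha^{\deg}\otimes\pi_{r\star}\LL_\chi[1]$. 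The main obstacle is the multiplicity-free, single-orbit assertion, which rests entirely on the description of $\mathrm{End}_k(\PPP)$; once that is in hand the remainder of the proof is routine Galois descent and an application of Schur's lemma.
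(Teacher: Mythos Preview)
Your overall strategy is sound and your proof of $(1)\Rightarrow(2)$ is correct, but it is a genuinely different argument from the paper's, and there is a small slip in your final identification step.

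\medskip
\noindent\textbf{Comparison of approaches.} The paper never invokes geometric semisimplicity. Instead it argues directly: from the inclusion $\LL_\chi[1]\hookrightarrow\PPP\otimes\bar k$ one descends (after a twist $\alpha^{\deg}$) to an injection $\alpha^{\deg}\otimes\LL_\chi[1]\hookrightarrow\pi_r^\star\PPP$ over $k_r$, then uses the adjunction $(\pi_{r\star},\pi_r^\star)$ to obtain a nonzero map $\alpha^{\deg}\otimes\pi_{r\star}\LL_\chi[1]\to\PPP$, which is surjective by irreducibility of $\PPP$. Applying each $\sigma\in\Gal(k_r/k)$ to the original inclusion produces $r$ pairwise non-isomorphic rank-one subsheaves $\LL_{\chi\circ\sigma}$ inside $\HHH^{-1}(\PPP)\otimes k_r$; these are automatically linearly independent, so $\PPP$ has rank $\geq r$ and the surjection is an isomorphism. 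Condition $(2)$ then drops out as a corollary. Your route---decompose $\PPP\otimes\bar k$ via purity and semisimplicity, then use $\mathrm{End}_k(\PPP)=\QQ$ to force a single multiplicity-free Frobenius orbit---is more structural and explains \emph{why} the result holds for any geometrically semisimple irreducible $\PPP$, at the cost of importing the purity machinery that the paper's adjunction argument avoids.

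\medskip
\noindent\textbf{The gap.} In your last paragraph you write that $\PPP$ and $\pi_{r\star}\LL_\chi[1]$ are ``two $k$-forms of the same irreducible geometric object $\mathcal M$'' and invoke Schur's lemma to get $\mathrm{Aut}(\mathcal M)=\QQ^\star$. But $\mathcal M=\PPP\otimes\bar k\cong\bigoplus_{j=0}^{r-1}\LL_{\chi^{q^j}}[1]$ is \emph{not} irreducible when $r>1$; rather $\mathrm{Aut}_{\bar k}(\mathcal M)\cong(\QQ^\star)^r$, with Frobenius acting by cyclic permutation of the factors. The conclusion you want still holds, because $H^1(\widehat{\mathbb Z},(\QQ^\star)^r)$ with this permutation action is $\QQ^\star$ via the product map, and twisting by $\alpha^{\deg}$ realises every class. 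Equivalently and more concretely: $\Hom_{\bar k}(\pi_{r\star}\LL_\chi[1]\otimes\bar k,\PPP\otimes\bar k)\cong\QQ^r$, Frobenius acts on it with $r$ distinct eigenvalues (the $r$-th roots of a single unit), and any eigenvector with eigenvalue $\alpha$ yields a nonzero element of $\Hom_k(\alpha^{\deg}\otimes\pi_{r\star}\LL_\chi[1],\PPP)$, necessarily an isomorphism since both sides are irreducible. With this correction your argument goes through.
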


\begin{proof}
(2)$\Rightarrow$(1) is trivial. Suppose that (1) holds, and let $r$ be the smallest positive integer such that $\LL_\chi$ is defined over $k_r$. Then we have an injective map $\alpha^{deg}\otimes\LL_{\chi}[1]\to\pi_r^\star\PPP$ for some $\ell$-adic unit $\alpha$ and, by adjunction, a non-zero map $\alpha^{deg}\otimes\pi_{r\star}\LL_{\chi}[1]\to\PPP$. Since $\PPP$ is irreducible, this map is surjective. In particular $\HHH^0(\PPP)=0$ and $\PPP$ is of the form $\FF[1]$ for some sheaf $\FF$ without punctual sections.

Let $\sigma\in\Gal(k_r/k)$, then $\alpha^{deg}\otimes\LL_{\chi\circ\sigma}[1]=\sigma^\star(\alpha^{deg}\otimes\LL_\chi)[1]\hookrightarrow \sigma^\star\pi_r^\star\PPP\cong\pi_r^\star\PPP$. Moreover $\chi\circ\sigma\neq\chi$ if $\sigma\neq Id$ since otherwise $\chi$ would be the pull-back of a character of $G(k')$, where $k'$ is the subfield of $k_r$ fixed by $\sigma$, and $\LL_\chi$ would be defined over $k'$, contrary to the hypothesis. Therefore $\pi^\star_r\FF$ contains at least $r$ non-isomorphic smooth subsheaves of rank $1$, so its rank (which is the rank of $\FF$) must be at least $r$. We conclude that the map $\alpha^{deg}\otimes\pi_{r\star}\LL_{\chi}[1]\to\PPP$ is an isomorphism. In particular, $\PPP\otimes\bar k$ is the direct sum of $\LL_{\chi\circ\sigma}[1]$ for all $\sigma\in\Gal(k_r/k)$.
\end{proof}

Following \cite{katz2010mellin} we will say that an irreducible perverse sheaf $\PPP$ on $G$ is \emph{negligible} if it satisfies the equivalent conditions in the previous lemma. 

\begin{prop}
 Let $\PPP$ be an irreducible perverse sheaf on $G$ of weight $1$, and suppose that $\PPP$ is non-negligible. Then for every $r\geq 1$, $\PPP^{[\ast r]}\in K_0(G,\QQ)$ is an integral combination of classes of perverse sheaves of weights $\leq r$.   
\end{prop}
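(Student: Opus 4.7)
The plan is to bound the weight of each summand $\R(\ast\wedge^i\rho)\PPP$ appearing in $\PPP^{[\ast r]}$ and then pass to classes of perverse sheaves in $K_0(G,\QQ)$ via the perverse cohomology functor; here $\rho$ denotes the standard representation of $\mathfrak S_r$. Since $\PPP$ is pure of weight $1$, the external tensor product $\PPP^{\boxtimes r}$ is pure of weight $r$ on $G^r$, and Deligne's estimates in Weil~II for $\R\mu_!$ along the multiplication map $\mu\colon G^r\to G$ give that $\PPP^{\ast r}=\R\mu_!\PPP^{\boxtimes r}$ is mixed of weight $\leq r$ in $\Dbc(G,\QQ)$. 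Because $\QQ$ has characteristic zero, the $\mathfrak S_r$-action decomposes $\PPP^{\ast r}$ into genuine isotypic components, so each $\R(\ast\wedge^i\rho)\PPP=(\PPP^{\ast r})^{\wedge^i\rho}$ is a direct summand of $\PPP^{\ast r}$ and inherits the bound mixed of weight $\leq r$.

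The next, and critical, step is to use non-negligibility to show that $\PPP^{\ast r}$ is semiperverse, i.e.\ lies in ${}^pD^{\leq 0}(G,\QQ)$. For $G=\GG_m$ this is part of Katz's construction \cite{katz2010mellin} of the Tannakian category of perverse sheaves on $\GG_m$ modulo negligible objects. For $G=\AAA^1$, the analogous conclusion follows from corollary \ref{fourieradams}: non-negligibility of $\PPP$ amounts to $\FT_\psi\PPP\cong\FF[1]$ with $\FF$ a sheaf having no punctual sections, and then $\FT_\psi(\PPP^{\ast r})\cong\FF^{\otimes r}[1]$ is plainly in ${}^pD^{\leq 0}$, so $\PPP^{\ast r}$ is semiperverse since Fourier transform is a perverse autoequivalence. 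Direct summands of semiperverse objects being semiperverse, we get ${}^p\HHH^j(\R(\ast\wedge^i\rho)\PPP)=0$ for every $j>0$. Combining with the BBD stability theorem \cite[5.1.14]{beilinson1982faisceaux} --- which guarantees that, for a mixed complex $K$ of weight $\leq w$, the perverse cohomology sheaf ${}^p\HHH^j(K)$ is mixed of weight $\leq w+j\leq w$ in degrees $j\leq 0$ --- each ${}^p\HHH^j(\R(\ast\wedge^i\rho)\PPP)$ with $j\leq 0$ is a mixed perverse sheaf of weight $\leq r$.

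To conclude, in $K_0(G,\QQ)$ one has
$$
[\R(\ast\wedge^i\rho)\PPP]=\sum_{j\leq 0}(-1)^j[{}^p\HHH^j(\R(\ast\wedge^i\rho)\PPP)],
$$
which after applying the weight filtration to each term becomes a $\Z$-linear combination of classes of pure perverse sheaves of weights $\leq r$. Substituting into
$$
\PPP^{[\ast r]}=\sum_{i=0}^{r-1}(-1)^i[\R(\ast\wedge^i\rho)\PPP]
$$
then yields the stated expression. The main obstacle is the semiperversity step: non-negligibility is exactly what forces the iterated convolution into non-positive perverse degrees and thereby keeps the weights from exceeding $r$. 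Without this hypothesis the conclusion already fails for $\PPP=\QQ[1]$ on $\AAA^1$, where $\PPP^{\ast 2}=\QQ(-1)[0]$ has ${}^p\HHH^1=\QQ(-1)[1]$ of weight $3>2=r$.
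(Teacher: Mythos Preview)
Your proof is correct but takes a more circuitous route than the paper's. The paper cites Katz's \textit{Rigid Local Systems} \cite[2.6.4, 2.6.8, 2.6.13, 2.6.14]{katz1996rls} to conclude in one stroke that $\PPP^{\ast r}$ is an honest \emph{perverse sheaf} (not merely in ${}^pD^{\leq 0}$) of weight $\leq r$; since each isotypic component $(\PPP^{\ast r})^{\wedge^i\rho}$ is then automatically perverse of weight $\leq r$ (its ordinary cohomology sheaves being subsheaves of those of $\PPP^{\ast r}$), the result follows immediately without passing through perverse cohomology or the weight filtration. By contrast, you split the argument into cases (Fourier transform for $\AAA^1$, Katz's later Tannakian work for $\GG_m$) and prove only semiperversity, which then forces the extra machinery of decomposing into ${}^p\HHH^j$ and invoking the BBD inequality $w({}^p\HHH^j(K))\leq w+j$. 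Your Fourier-transform computation $\FT_\psi(\PPP^{\ast r})\cong\FF^{\otimes r}[1]$ is transparent and makes the $\AAA^1$ case pleasantly self-contained, and the closing example with $\QQ[1]$ nicely isolates why non-negligibility is essential; but the paper's single citation treats both groups uniformly and sidesteps the additional homological algebra. One minor point: the isomorphism you need is the Brylinski formula $\FT_\psi(K^{\ast r})\cong\FT_\psi(K)^{\otimes r}[-(r-1)]$ used in the proof of proposition~\ref{fourier}, rather than corollary~\ref{fourieradams}, which is a statement about Adams powers in $K_0$.
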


\begin{proof}
 By \cite[2.6.4, 2.6.8, 2.6.13, 2.6.14]{katz1996rls} the $r$-fold convolution $\PPP^{\ast r}$ is a perverse sheaf, of weights $\leq r$ (since $K$ is perverse if and only if $K\otimes \bar k$ is). For every representation $\rho$ of ${\mathfrak S}_r$, $\R(\ast\rho)\PPP=(\PPP^{\ast r})^\rho$ is then also perverse of weights $\leq r$ (since $\HHH^i((\PPP^{\ast r})^\rho)=\HHH^i(\PPP^{\ast r})^\rho$ is a subsheaf of $\HHH^i(\PPP^{\ast r})$ for every $i$, and is therefore zero for $i\neq 0,-1$, punctual for $i=0$ and without punctual sections for $i=-1$). In particular $\PPP^{[\ast r]}=\sum_{i=0}^{r-1}(-1)^i\R(\ast\wedge^i\rho)\PPP$ is an integral combination of classes of such perverse sheaves, where $\rho$ is the standard representation of ${\mathfrak S}_r$.
\end{proof}

\begin{cor}\label{UC}
 Let $\PPP$ be a non-negligible irreducible perverse sheaf on $G$ of weight $1$. For every integer $r\geq 1$ there exists a dense open set $U_{\PPP,r}\subseteq G$ and a constant $C_{\PPP,r}$ such that for every integer $m\geq 1$ and every $t\in U_{\PPP,r}(k_m)$ the $L$-function $L^{\Nm,r}({\mathcal P},k_m,t;T)$ has total degree bounded by $C_{\PPP,r}$ and all its reciprocal roots and poles have $q^m$-weight $\leq r-1$. In particular, for every $m\geq 1$ and every $t\in U_{\PPP,r}(k_m)$ we have the estimate
$$
|f^{\Nm,r}_\PPP(k_m,t)|\leq C_{\PPP,r}q^{\frac{m(r-1)}{2}}
$$ 
If $t\notin U_{\PPP,r}(k_m)$, then all reciprocal roots and poles of $L^{\Nm,r}({\mathcal P},k_m,t;T)$ have $q^m$-weight $\leq r$.
\end{cor}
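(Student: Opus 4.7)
The plan is to combine Theorem 4.3 with the preceding proposition on $\PPP^{[\ast r]}$. By Theorem 4.3 applied to $[\PPP]\in K_0(G,\QQ)$, we have $\Phi(\PPP^{[\ast r]})=\Phi(\PPP)^{\Nm,r}=f^{\Nm,r}_\PPP$, so $L^{\Nm,r}(\PPP,k_m,t;T)$ is just the local $L$-function of the virtual class $\PPP^{[\ast r]}$ at $t$. The preceding proposition lets me write $\PPP^{[\ast r]}=\sum_{i=1}^N a_i[\PPP_i]$ in $K_0(G,\QQ)$, where $a_i\in\Z$ and each $\PPP_i$ is a perverse sheaf on $G$ mixed of weight $\leq r$, with $N$, the $a_i$, and the $\PPP_i$ all depending only on $\PPP$ and $r$.

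I would then exploit the explicit structure of perverse sheaves on the one-dimensional group $G$: each $\PPP_i$ has non-zero cohomology sheaves only in degrees $-1$ and $0$, with $\HHH^0(\PPP_i)$ supported on a finite set $Z_i^0\subset G$ and $\HHH^{-1}(\PPP_i)$ lisse outside a finite set $Z_i^{-1}$ (since it has no punctual sections). Set $U_{\PPP,r}:=G\setminus\bigcup_{i=1}^N(Z_i^0\cup Z_i^{-1})$, a dense open subset depending only on $\PPP$ and $r$. For $t\in U_{\PPP,r}(k_m)$ the stalk $\HHH^0(\PPP_i)_{\bar t}$ vanishes, and combining the additivity of $L$ with the standard factorization $L(f_{\PPP_i},k_m,t;T)=\det(1-T\Frob_{k_m,t}|\HHH^{-1}(\PPP_i)_{\bar t})\cdot\det(1-T\Frob_{k_m,t}|\HHH^0(\PPP_i)_{\bar t})^{-1}$ gives
\[
L^{\Nm,r}(\PPP,k_m,t;T)=\prod_{i=1}^N\det(1-T\Frob_{k_m,t}|\HHH^{-1}(\PPP_i)_{\bar t})^{a_i}.
\]

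Since $\PPP_i$ is perverse of weight $\leq r$, the cohomology sheaf $\HHH^{-1}(\PPP_i)$ is mixed of weights $\leq r-1$, so on $U_{\PPP,r}$ every reciprocal root and pole of $L^{\Nm,r}(\PPP,k_m,t;T)$ has $q^m$-weight $\leq r-1$. Taking $C_{\PPP,r}:=\sum_{i=1}^N|a_i|n_i$, where $n_i$ is the generic rank of $\HHH^{-1}(\PPP_i)$, bounds the total degree of the $L$-function; the pointwise estimate for $f^{\Nm,r}_\PPP(k_m,t)$ then follows by writing this trace as the sum of reciprocal roots minus reciprocal poles and applying the triangle inequality together with the weight bound $\leq q^{m(r-1)/2}$. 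For the last assertion, when $t\in G(k_m)\setminus U_{\PPP,r}(k_m)$ the formula picks up additional factors $\det(1-T\Frob_{k_m,t}|\HHH^0(\PPP_i)_{\bar t})^{-a_i}$; since $\HHH^0(\PPP_i)$, as the $0$-th cohomology sheaf of a perverse complex of weight $\leq r$, is mixed of weight $\leq r$, these extra factors contribute reciprocal roots and poles of $q^m$-weight $\leq r$, while the $\HHH^{-1}$ contributions still have weight $\leq r-1$. The main obstacle is really just the bookkeeping of perverse-weight conventions and of which contributions give reciprocal roots and which give poles; the substantive ingredient—the decomposition of $\PPP^{[\ast r]}$ into perverse pieces of bounded weight—is already supplied by the preceding proposition.
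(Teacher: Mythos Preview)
Your argument is correct and follows essentially the same route as the paper: identify $L^{\Nm,r}(\PPP,k_m,t;T)$ with the local $L$-function of $\PPP^{[\ast r]}$ via Theorem~4.3, decompose $\PPP^{[\ast r]}$ as a signed sum of perverse sheaves of weight $\leq r$ (the paper uses the explicit pieces ${\mathcal Q}_i=\R(\ast\wedge^i\rho)\PPP$), and read off the weight and degree bounds from the structure of perverse sheaves on a curve. The only notable difference is that the paper defines $U_{\PPP,r}$ by removing just the supports of the $\HHH^0({\mathcal Q}_i)$, exploiting the fact that $\HHH^{-1}({\mathcal Q}_i)$ has no punctual sections so its rank at \emph{every} point is bounded by the generic rank; your additional removal of the ramification loci $Z_i^{-1}$ is harmless for the existence statement but yields a smaller open set than necessary.
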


\begin{proof} For every $i=0,\ldots,r-1$, let ${\mathcal Q}_i$ be the perverse sheaf $\R(\ast\wedge^i\rho)\PPP$, where $\rho$ is the standard representation of ${\mathfrak S}_r$. Let $U_i\subseteq G$ be the largest open set on which $\HHH^0({\mathcal Q}_i)=0$, and $C_i$ the generic rank of $\HHH^{-1}({\mathcal Q}_i)$. We define $U_{\PPP,r}=U_0\cap\cdots\cap U_{r-1}$ and $C_{\PPP,r}=C_0+\cdots+C_{r-1}$.

For every integer $m\geq 1$ and every $t\in U_{\PPP,r}(k_m)$, we have
$$
L^{\Nm,r}({\mathcal P},k_m,t;T)=L(\PPP^{[\ast r]},k_m,t;T)=\prod_{i=0}^{r-1}L({\mathcal Q}_i,k_m,t;T)^{(-1)^i}
$$
$$
=\prod_{i=0}^{r-1}L(\HHH^{-1}({\mathcal Q}_i),k_m,t;T)^{(-1)^{i+1}}
$$

The result follows from the fact that $\HHH^{-1}({\mathcal Q}_i)$ is mixed of weights $\leq r-1$ and does not have punctual sections for any $i$, so its rank at any point of $U_{\PPP,r}$ is less than or equal to its generic rank. If $t\notin U_{\PPP,r}(k_m)$ we would also get factors of the form $L(\HHH^{0}({\mathcal Q}_i),k_m,t;T)$ which are mixed of weight $\leq r$.
\end{proof}

The result extends to any perverse $\PPP$ pure of weight $1$ as long as no irreducible component of $\PPP$ is negligible. As an easy example, we compute $U_{\PPP,r}$ and $C_{\PPP,r}$ when $\PPP=\delta_a$ is a punctual object supported on $a\in G(k)$ (placed in degree $0$).

\begin{prop}
 For every $a\in G(k)$ and $r\geq 1$, $\delta_a^{[\ast r]}=\delta_{a^r}$. In particular, $U_{\PPP,r}(\bar k)=G(\bar k)-\{a\}$ and $C_{\PPP,r}=0$.
\end{prop}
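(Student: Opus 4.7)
The plan is to invoke Theorem 4.3, which identifies the Frobenius trace function of $\delta_a^{[\ast r]}$ with the $r$-th norm power $\Phi(\delta_a)^{\Nm,r}$, and then use the injectivity of $\Phi$ to deduce the equality in $K_0(G,\QQ)$ from the corresponding equality of functions in $\CC_G$.

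First I would compute the trace function of $\delta_a = i_{a\star}\QQ[0]$. Since Frobenius acts trivially on its one-dimensional stalk at $a$, one has $\Phi(\delta_a)(k_m,t) = 1$ if $t=a$ in $G(k_m)$ and $0$ otherwise. Next I would compute the norm power directly from the definition:
$$
\Phi(\delta_a)^{\Nm,r}(k_m,t) \;=\; \#\{u\in G(k_{mr})\,:\, u=a \text{ and }\Nm_{k_{mr}/k_m}(u)=t\}.
$$
Because $a\in G(k)$ is fixed by every element of $\Gal(k_{mr}/k_m)$, its norm is $\Nm_{k_{mr}/k_m}(a) = a^{[k_{mr}:k_m]} = a^r$. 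So the above count is $1$ when $t=a^r$ and $0$ otherwise, which is precisely $\Phi(\delta_{a^r})(k_m,t)$.

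Combining this with Theorem 4.3 gives $\Phi(\delta_a^{[\ast r]})=\Phi(\delta_a)^{\Nm,r}=\Phi(\delta_{a^r})$, and the injectivity of $\Phi$ forces $\delta_a^{[\ast r]} = \delta_{a^r}$ in $K_0(G,\QQ)$. The statements about $U_{\PPP,r}$ and $C_{\PPP,r}$ are then immediate from the definitions in the proof of Corollary \ref{UC}: each $\R(\ast\wedge^i\rho)\delta_a$ is either punctual or zero (which may also be verified intrinsically by noting that the $\mathfrak{S}_r$-action on the one-dimensional stalk of $\delta_a^{\ast r}\cong\delta_{a^r}$ at the fixed point $(a,\ldots,a)$ is trivial, killing every non-trivial irreducible summand), so every $\HHH^{-1}$-term contributes $0$ and the only point where an $\HHH^{0}$-term is nonzero is the support of $\delta_{a^r}$. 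There is no substantive obstacle here; the entire argument is essentially a consistency check that Theorem 4.3 specializes correctly to skyscraper sheaves.
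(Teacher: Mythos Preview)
Your proof is correct but takes a different route from the paper's. The paper computes directly: $\delta_a^{\ast r}=\delta_{a^r}$ with trivial $\mathfrak S_r$-action, so $\R(\ast\wedge^i\rho)\delta_a$ equals $\delta_{a^r}$ for $i=0$ and vanishes for $i>0$, whence $\delta_a^{[\ast r]}=[\delta_{a^r}]$. You instead invoke Theorem~4.3 together with the injectivity of $\Phi$, deducing the identity in $K_0$ from the trace-function calculation $\Phi(\delta_a)^{\Nm,r}=\Phi(\delta_{a^r})$. Your route avoids appealing to an external reference for the triviality of the $\mathfrak S_r$-action, but the paper's direct argument yields slightly more: it identifies each $\R(\ast\wedge^i\rho)\delta_a$ individually, which is precisely what the definitions of $U_{\PPP,r}$ and $C_{\PPP,r}$ require. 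You recover this in your parenthetical remark, which is essentially the paper's computation; alternatively, since $\delta_a^{\ast r}$ is punctual of rank one, each $\R(\ast\wedge^i\rho)\delta_a$ is automatically either $0$ or $\delta_{a^r}$, and the $K_0$-identity you have already established forces exactly one of them to be nonzero. (Your conclusion that the excluded point is $a^r$ rather than $a$ is the correct one, consistent with the corollary that follows; the ``$\{a\}$'' in the displayed statement is a slip.)
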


\begin{proof}
 For every $r\geq 1$, $\delta_a^{\ast r}=\delta_{a^r}$, with trivial ${\mathfrak S}_r$ action \cite[2.5.3]{katz1996rls}. Therefore $\R(\ast\wedge^i\rho)\delta_a=\delta_{a^r}$ for $i=0$ and $0$ for $i>0$. We conclude that $\delta_a^{[\ast r]}=[\delta_{a^r}]$.
\end{proof}

\begin{cor}
 Let $\PPP$ be a punctual perverse sheaf on $G$ supported on $Z\subseteq G$. Let $S=\{z^r|z\in Z(\bar k)\}$. Then in corollary \ref{UC} one can take $U_{\PPP,r}(\bar k)=G(\bar k)-S$ and $C_{\PPP,r}=0$.
\end{cor}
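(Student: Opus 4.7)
The plan is to show that the class $\PPP^{[\ast r]}\in K_0(G,\QQ)$ is represented by a punctual virtual object whose support is contained in $S$; once that is established the assertion is immediate. Indeed, the $r$-th power map $G\to G$ is a $k$-morphism and $Z$ is a finite Galois-stable closed subscheme of $G$, so $S$ is the set of $\bar k$-points of a finite closed subscheme of $G$. Hence $U_{\PPP,r}:=G\setminus S$ is dense open over $k$, and for every $m\geq 1$ and every $t\in U_{\PPP,r}(k_m)$ the trace function $f_{\PPP^{[\ast r]}}(k_m,t)$ vanishes for all $m$. By Theorem 4.3 this trace function agrees with $f_\PPP^{\Nm,r}(k_m,\cdot)$, so $L^{\Nm,r}(\PPP,k_m,t;T)\equiv 1$, giving total degree $0$ with no reciprocal roots or poles of any weight.

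To prove the support claim, I would first reduce to the geometric setting. The convolution, the $\mathfrak S_r$-action on $\PPP^{\ast r}$, and the functor $(-)^\rho$ all commute with flat base change, so $\PPP^{[\ast r]}\otimes\bar k=(\PPP\otimes\bar k)^{[\ast r]}$ in $K_0(G\otimes\bar k,\QQ)$. It therefore suffices to check that $(\PPP\otimes\bar k)^{[\ast r]}$ is supported on $S$. Any punctual perverse sheaf on $G\otimes\bar k$ decomposes as a finite direct sum $\bigoplus_{z\in Z(\bar k)}V_z\otimes\delta_z$ for finite-dimensional $\QQ$-vector spaces $V_z$. Now use the proposition preceding the definition of $K^{[\ast r]}$: the convolution Adams power descends to a group homomorphism $K_0(G\otimes\bar k,\QQ)\to K_0(G\otimes\bar k,\QQ)$. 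Iterating this additivity, together with the preceding proposition $\delta_z^{[\ast r]}=\delta_{z^r}$, gives
$$
\bigl[(\PPP\otimes\bar k)^{[\ast r]}\bigr]=\sum_{z\in Z(\bar k)}(\dim V_z)\cdot[\delta_z^{[\ast r]}]=\sum_{z\in Z(\bar k)}(\dim V_z)\cdot[\delta_{z^r}],
$$
which is punctual and supported on $S$, as required.

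There is no genuine obstacle here: the computation rests entirely on the additivity of $K\mapsto K^{[\ast r]}$ on distinguished triangles (already established) and the explicit evaluation $\delta_a^{[\ast r]}=\delta_{a^r}$ (the previous proposition). The only mildly delicate point worth double-checking is the compatibility of the convolution Adams power with base change $\Spec\bar k\to\Spec k$, but this is automatic because $K^{\ast r}$ and the permutation action of $\mathfrak S_r$ on it are constructed from proper pushforwards and external tensor products, both of which commute with pullback.
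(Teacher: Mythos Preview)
Your proof is correct and follows the same approach as the paper: reduce to $\bar k$ via compatibility of the convolution Adams power with base change, decompose the punctual object as a sum of $\delta_z$'s, and apply additivity together with the preceding proposition $\delta_z^{[\ast r]}=\delta_{z^r}$. The paper's proof is the one-line version of exactly this argument.
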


\begin{proof}
 It is an immediate consequence of the previous proposition and the additivity of the convolution Adams power, since the operation $\PPP\mapsto\PPP^{[\ast r]}$ commutes with extension of scalars to $\bar k$.
\end{proof}

\begin{rem}{\rm 
 This shows that, in general, the $U_{\PPP,r}$ and $C_{\PPP,r}$ defined in the proof of corollary \ref{UC} are not the best possible ones, since there may be some cancellation among the ${\mathcal Q}_i$'s when taking the alternating product. For instance, if $k={\mathbb F}_3$, $G=\AAA^1_k$ and $\PPP$ is punctual supported on $\Spec k[t]/(t^2+1)=\{\pm i\}$, the proof gives $U_{\PPP,2}=G-\{0,\pm 2i\}=G-\{0,\pm i\}$ (since $\PPP^{\ast 2}=\Sym^{\ast 2}\PPP\oplus\wedge^{\ast 2}\PPP$, so the union of the supports of $\Sym^{\ast 2}\PPP$ and $\wedge^{\ast 2}\PPP$ is the support of $\PPP^{\ast 2}$), while the previous corollary shows that we could take $U_{\PPP,2}=G-\{\pm i\}$. 

In the remainder of the article we will assume, unless otherwise stated, that $U_{\PPP;r}$ and $C_{\PPP,r}$ are the ones defined in the proof of \ref{UC}.}
\end{rem}

\section{Examples on $\AAA^1_k$}

In this section we further specialize to the case $G=\AAA^1_k$. Here the ``norm'' map $G(k_{mr})=k_{mr}\to G(k_m)=k_m$ is just the trace, so we will write $L^{\Tr,r}$ and $f^{\Tr,r}$ instead of $L^{\Nm,r}$ and $f^{\Nm,r}$. Fix a non-trivial character $\psi:k\to\QQ^\star$. Since the Fourier transform with respect to $\psi$ preserves perversity and interchanges punctual objects and (shifted) Artin-Schreier sheaves, an irreducible perverse object $\PPP$ on $\AAA^1_k$ is negligible if and only if its Fourier transform is punctual. For those objects we can explicitely determine $\PPP^{[\ast r]}$:

\begin{prop}\label{ASformula}
 Let ${\mathcal P}\in\Perv(\AAA^1_k,\QQ)$ be irreducible, negligible and pure of weight $1$. Then
$$
\PPP^{[\ast r]}=[(\alpha q)^{(r-1) deg}\otimes\PPP]
$$
for some $\ell$-adic unit $\alpha\in\QQ$ of weight $0$.
\end{prop}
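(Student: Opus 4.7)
The plan is to Fourier transform the statement using Corollary \ref{fourieradams}, which converts the convolution Adams power on $\AAA^1_k$ into the ordinary Adams power, and then exploit the fact that a negligible irreducible perverse sheaf on $\AAA^1_k$ has a punctual Fourier transform with one-dimensional geometric stalks.

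By Lemma \ref{negligible} applied to the irreducible negligible perverse sheaf $\PPP$, we may write $\PPP \cong \beta^{\deg}\otimes\pi_{s\star}\LL_\chi[1]$ for some $\ell$-adic unit $\beta$, an integer $s\geq 1$, and an additive character $\chi$ of $k_s$ not descending to any proper subfield. A direct computation shows that $\FF := \FT_\psi(\PPP)$ is a punctual perverse sheaf on $\AAA^1_k$ supported on a single closed point $Z$ of degree $s$, pure of weight $2$, with all geometric stalks one-dimensional (geometrically $\PPP$ is a sum of shifted Artin--Schreier sheaves, each of which Fourier-transforms to a skyscraper, and Galois permutes these skyscrapers in a single orbit). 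Corollary \ref{fourieradams} then gives $\FT_\psi(\PPP^{[\ast r]}) = \FF^{[r]}$. Since $\wedge^i\FF = 0$ for $i\geq 2$ by the rank-$1$ condition on stalks, the formula of Proposition \ref{young} collapses to $\FF^{[r]} = [\FF^{\otimes r}]$.

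Viewing $\FF$ as a one-dimensional representation of $\Gal(\bar k/k_s)$ on which $\Frob_{k_s}$ acts by a scalar $\gamma$ with $|\gamma|=q^s$, pick any $s$-th root $\alpha \in \QQ^\star$ of $\gamma/q^s$; it is automatically an $\ell$-adic unit of weight $0$. The geometrically constant sheaf $(\alpha q)^{(r-1)\deg}$, restricted to $Z$, has $\Frob_{k_s}=\Frob_k^s$ acting by $(\alpha q)^{(r-1)s}=\gamma^{r-1}$, so tensoring with $\FF$ produces the sheaf on which $\Frob_{k_s}$ acts by $\gamma^r$, i.e.\ exactly $\FF^{\otimes r}$. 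Any two $s$-th roots of $\gamma/q^s$ differ by a root of unity $\zeta$ with $\zeta^{(r-1)s}=1$, so the identification is unambiguous, and $\FF^{[r]}=(\alpha q)^{(r-1)\deg}\otimes\FF$.

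Since $\FT_\psi$ commutes with tensoring by geometrically constant sheaves of the form $c^{\deg}$ (both operations merely multiply Frobenius actions by $c^m$ on $k_m$-points), applying Fourier inversion to the equality $\FT_\psi(\PPP^{[\ast r]}) = (\alpha q)^{(r-1)\deg}\otimes\FT_\psi(\PPP)$ yields $\PPP^{[\ast r]} = [(\alpha q)^{(r-1)\deg}\otimes\PPP]$ in $K_0(\AAA^1_k,\QQ)$. The main technical obstacle is the third paragraph: matching Frobenius structures on $\FF^{\otimes r}$ and $(\alpha q)^{(r-1)\deg}\otimes\FF$ across the degree-$s$ closed point $Z$ requires an $s$-th root extraction, which is why $\alpha$ is not canonically determined by $\PPP$.
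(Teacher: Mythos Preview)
Your proof is correct and follows the same overall strategy as the paper: apply Corollary \ref{fourieradams} to reduce to showing $\mathcal{Q}^{[r]}=[(\alpha q)^{(r-1)\deg}\otimes\mathcal{Q}]$ for the punctual, irreducible, weight-$2$ Fourier transform $\mathcal{Q}=\FT_\psi(\PPP)$, then exploit that $\mathcal{Q}$ is supported on a single closed point with one-dimensional stalks.

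The only substantive difference is in how the Adams power of the punctual object is computed. You invoke the explicit formula of Proposition \ref{young}, observe that $\wedge^i$ vanishes for $i\geq 2$ on a rank-$1$ sheaf so that $\mathcal{Q}^{[r]}=[\mathcal{Q}^{\otimes r}]$, and then match Frobenius eigenvalues directly. The paper instead uses the defining trace identity $\Tr(\phi\,|\,V^{[r]})=\Tr(\phi^r\,|\,V)$ to compute $\Phi(\mathcal{Q}^{[r]})$ at every $(k_m,t)$, compares with $\Phi((\alpha q)^{(r-1)\deg}\otimes\mathcal{Q})$, and concludes via the injectivity of $\Phi$. Both arguments are short and natural; yours is slightly more structural, while the paper's avoids any appeal to Proposition \ref{young} or to the explicit description of $\PPP$ from Lemma \ref{negligible} (it needs only that $\mathcal{Q}$ is irreducible and punctual).
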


\begin{proof} After taking Fourier transform with respect to $\psi$ on both sides, the equality is equivalent by corollary \ref{fourieradams} to
$$
{\mathcal Q}^{[r]}=[(\alpha q)^{(r-1) deg}\otimes{\mathcal Q}]
$$
where $\mathcal Q$ is the Fourier transform of $\PPP$, which is punctual and pure of weight $2$ by hypothesis.

Since ${\mathcal Q}$ is punctual and irreducible, there exists a closed point $x\in\AAA^1_k$ and an irreducible sheaf $\FF$ on $\{x\}=\Spec k(x)$ such that ${\mathcal Q}=i_{x\star}\FF[0]$. Since $\FF$ is irreducible, it must be equal to $(\alpha q)^\mathrm{deg}$ for some $\ell$-adic unit $\alpha$ of weight $0$. We will show that ${\mathcal Q}^{[r]}=[(\alpha q)^{(r-1) deg}\otimes{\mathcal Q}]$ by comparing their trace functions and using the injectivity of $\Phi$.

For any $m\geq 1$ and every $t\in k_m$, $\Phi({\mathcal Q}^{[r]})(k_m,t)=\Tr(\Frob_{k_m,t}|i_{x\star}\FF_{\bar t}^{[r]})=\Tr(\Frob_{k_{mr},t}|i_{x\star}\FF_{\bar t})=(\alpha q)^{mr}$ if $t\in\{x\}(k_{mr})$ and $0$ otherwise. But $t\in\{x\}(k_{mr})$ if and only if $t\in\{x\}(k_m)$ (if and only if $t$ is a root of the irreducible polynomial that defines $x$), so in any case $\Phi({\mathcal Q}^{[r]})(k_m,t)=(\alpha q)^{m(r-1)}\Phi({\mathcal Q})(k_m,t)=\Phi((\alpha q)^{(r-1) deg}\otimes{\mathcal Q})(k_m,t)$.
\end{proof}

The $\ell$-adic unit $\alpha$ can be determined from ${\mathcal P}$ in the following way: Let $d\geq 1$ be an integer such that $\HHH^{-1}({\mathcal P})\otimes k_d$ splits as an extension of Artin-Schreier sheaves. Then $\alpha$ is any $d$-th root of
$$
\frac{\Tr(\Frob_{k_d,0}|(\HHH^{-1}({\mathcal P})\otimes k_d)_0)}{\mathrm{rank}\,\HHH^{-1}({\mathcal P})}.
$$ 

\begin{cor}\label{AScase}
 If ${\mathcal P}\in\Perv(\AAA^1_k,\QQ)$ is irreducible, negligible and pure of weight $1$, then there exists some $\ell$-adic unit $\alpha\in\QQ$ of weight $0$ such that for every integer $m\geq 1$ and every $t\in k_m$ 
$$
L^{\Tr,r}({\mathcal P},k_m,t;T)=L({\mathcal P},k_m,t;(\alpha q)^{m(r-1)}T).
$$
\end{cor}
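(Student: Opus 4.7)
The plan is to assemble this as a direct consequence of three previously established results: Theorem 4.3, Proposition \ref{ASformula}, and the elementary scaling formula $L(\kappa_\alpha f, k_m, t; T) = L(f, k_m, t; \alpha^m T)$ recorded just after the definition of the local $L$-function.

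First, I would unwind the definition of the norm $L$-function in this setting. Since $G = \AAA^1_k$ is additive, the norm map $\Nm_{k_{mr}/k_m}$ is the trace $\Tr_{k_{mr}/k_m}$, so $f^{\Tr,r}_\PPP$ is precisely $\Phi(\PPP)^{\Nm,r}$ in the notation of Section 4. By Theorem 4.3, $\Phi(\PPP)^{\Nm,r} = \Phi(\PPP^{[\ast r]})$, and therefore
\[
L^{\Tr,r}(\PPP,k_m,t;T) = L(\PPP^{[\ast r]},k_m,t;T).
\]

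Next, since $\PPP$ is irreducible, negligible and pure of weight $1$, Proposition \ref{ASformula} supplies an $\ell$-adic unit $\alpha$ of weight $0$ with
\[
\PPP^{[\ast r]} = [(\alpha q)^{(r-1)\deg} \otimes \PPP]
\]
in $K_0(\AAA^1_k,\QQ)$. Passing to Frobenius trace functions gives $f_{\PPP^{[\ast r]}} = \kappa_{(\alpha q)^{r-1}} \cdot f_\PPP$, since the geometrically constant sheaf $(\alpha q)^{(r-1)\deg}$ represents the constant function $\kappa_{(\alpha q)^{r-1}}$.

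Finally, applying the scaling formula $L(\kappa_\beta f,k_m,t;T) = L(f,k_m,t;\beta^m T)$ with $\beta = (\alpha q)^{r-1}$ yields
\[
L(\PPP^{[\ast r]},k_m,t;T) = L(\PPP,k_m,t;(\alpha q)^{m(r-1)} T),
\]
and chaining this with the first identity gives the claim. There is no real obstacle here: all the substantive work was done in Proposition \ref{ASformula}, and this corollary is just the translation of that $K_0$-level identity into a statement about $L$-functions, using that multiplication of a trace function by a constant function corresponds to a rescaling of the variable $T$.
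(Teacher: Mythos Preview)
Your proof is correct and is exactly the argument the paper intends: the corollary is stated without proof immediately after Proposition~\ref{ASformula}, and the deduction you give---combining Theorem~4.3, Proposition~\ref{ASformula}, and the scaling identity $L(\kappa_\beta f,k_m,t;T)=L(f,k_m,t;\beta^m T)$---is precisely the intended one-line derivation.
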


For non-negligible $\PPP$ we can give the following characterization of $U_{\PPP,r}$:

\begin{prop}\label{ULr}
 Let $\PPP\in\Perv(\AAA^1_k,\QQ)$ be irreducible and non-negligible. Let $\mathrm{FT}_\psi\PPP=\GGG[1]$, with $\GGG\in\Sh(\AAA^1_k,\QQ)$ an irreducible middle extension sheaf. Let $a\in \bar k$, and $\LL_{\psi_{-a}}$ the Artin-Schreier sheaf on $\AAA^1_{\bar k}$ associated to the character $t\mapsto\psi(-at)$. Then $a\in U_{\PPP,r}(\bar k)$ if and only if the following equivalent conditions hold:
\begin{enumerate}
 \item $\HH^2_c(\AAA^1_{\bar k},\LL_{\psi_{-a}}\otimes\R(\wedge^i\rho)\GGG)=0$ for every $i=0,\ldots,r-1$, where $\rho$ is the standard representation of ${\mathfrak S}_r$.
 \item $\HH^2_c(\AAA^1_{\bar k},\LL_{\psi_{-a}}\otimes\Sym^{r-i}\GGG\otimes\wedge^i\GGG)=0$ for every $i=0,\ldots,r$.
\end{enumerate}
\end{prop}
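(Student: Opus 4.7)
The approach is via Fourier inversion, detecting the punctual part of ${\mathcal Q}_i := \R(\ast\wedge^i\rho)\PPP$ on the Fourier side.

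Recall from the proof of Corollary \ref{UC} that $a\in U_{\PPP,r}(\bar k)$ iff $\HHH^0({\mathcal Q}_i)_a = 0$ for every $i=0,\ldots,r-1$; each ${\mathcal Q}_i$ is perverse. Applying Proposition \ref{fourier} with $n=1$ and the representation $\wedge^i\rho$ gives
$$\FT_\psi({\mathcal Q}_i) \cong \R((\wedge^i\rho)\otimes\sigma)(\FT_\psi\PPP)[-(r-1)],$$
and substituting $\FT_\psi\PPP = \GGG[1]$ together with Proposition \ref{shift} (using $\sigma\otimes\sigma = \mathbf 1$) collapses this to $\FT_\psi({\mathcal Q}_i) \cong \R(\wedge^i\rho)(\GGG)[1] =: \HHH_i[1]$. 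Since $\FT_\psi$ preserves perversity, $\HHH_i$ is an ordinary sheaf without punctual sections.

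For the main step, apply $\FT_\psi$ once more and compare stalks. Fourier inversion on $\AAA^1_k$ reads $\FT_\psi\FT_\psi K \cong [-1]^\star K(-1)$ where $[-1]\colon x\mapsto -x$, hence $\FT_\psi(\HHH_i) \cong [-1]^\star{\mathcal Q}_i(-1)[-1]$. The defining formula $\FT_\psi(K)_y = \R\Gamma_c(\AAA^1_{\bar k}, K\otimes\LL_{\psi_y})[1]$ at $y=-a$ then yields
$$
\HH^2_c(\AAA^1_{\bar k},\HHH_i\otimes\LL_{\psi_{-a}}) \;=\; \HHH^1(\FT_\psi(\HHH_i))_{-a} \;\cong\; \HHH^0([-1]^\star{\mathcal Q}_i)_{-a}(-1) \;\cong\; \HHH^0({\mathcal Q}_i)_a(-1),
$$
since $([-1]^\star\FF)_{-a} = \FF_a$. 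Thus this $\HH^2_c$ vanishes exactly when $\HHH^0({\mathcal Q}_i)_a = 0$, proving (1) $\Leftrightarrow$ $a\in U_{\PPP,r}(\bar k)$.

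For the equivalence (1) $\Leftrightarrow$ (2), invoke the Pieri decomposition extracted from the proof of Proposition \ref{young}: for $1\leq i\leq r-1$ one has $\Sym^{r-i}\GGG\otimes\wedge^i\GGG \cong \R(\wedge^{i-1}\rho)\GGG \oplus \R(\wedge^i\rho)\GGG$, with boundary cases $\Sym^r\GGG \cong \R(\wedge^0\rho)\GGG$ and $\wedge^r\GGG \cong \R(\wedge^{r-1}\rho)\GGG$. Additivity of $\HH^2_c(\AAA^1_{\bar k}, \LL_{\psi_{-a}}\otimes-)$ on direct sums then matches the $r+1$ vanishings in (2) with the $r$ vanishings in (1). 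The main obstacle is the bookkeeping in the Fourier inversion step --- simultaneously tracking the shifts, the Tate twist, and the sign coming from $[-1]^\star$ --- so that $\LL_{\psi_{-a}}$ on the sheaf side correctly pairs with the stalk at $a$ (and not $-a$) on the perverse side.
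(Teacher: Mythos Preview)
Your proof is correct and follows essentially the same approach as the paper: both use Proposition~\ref{fourier} and Proposition~\ref{shift} to identify $\FT_\psi({\mathcal Q}_i)$ with $\R(\wedge^i\rho)\GGG[1]$, then observe that the stalk $\HHH^0({\mathcal Q}_i)_a$ is (a Tate twist of) $\HH^2_c(\AAA^1_{\bar k},\LL_{\psi_{-a}}\otimes\R(\wedge^i\rho)\GGG)$, and both derive (1)$\Leftrightarrow$(2) from the Pieri decomposition. The only difference is that you spell out the stalk identification via a second application of $\FT_\psi$ and Fourier inversion, whereas the paper asserts it in one line; your extra bookkeeping with the sign $[-1]^\star$ and the twist is correct and makes the argument more self-contained.
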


\begin{proof}
 For (1) it is a consequence of the definition of $U_{\PPP,r}$ in proposition \ref{UC}: since $\GGG[1]$ is the Fourier transform of $\PPP$, by \ref{fourier} and \ref{shift} the Fourier transform of $\R(\ast\wedge^i\rho)\PPP$ is $\R((\wedge^i\rho)\otimes\sigma)(\GGG[1])[1-r]\cong(\R(\wedge^i\rho)\GGG[r])[1-r]=\R(\wedge^i\rho)\GGG[1]$, so  $\HH^2_c(\AAA^1_{\bar k},\LL_{\psi_{-a}}\otimes\R(\wedge^i\rho)\GGG)$ is (a Tate twist of) the stalk at $a$ of $\HHH^0({\mathcal Q}_i)$.

The equivalence between (1) and (2) can be deduced from the formulas $\Sym^{r-i}\GGG\otimes\wedge^i\GGG=\R(\wedge^{i-1}\rho\oplus\wedge^i\rho)\GGG=\R(\wedge^{i-1}\rho)\GGG\oplus\R(\wedge^i\rho)\GGG$ for $i=1,\ldots,r-1$, $\Sym^r\GGG=\R({\mathbf 1})\GGG$ and $\wedge^r\GGG=\R(\wedge^{r-1}\rho)\GGG$.
\end{proof}

We will now apply these results to some particular examples of sheaves.

\begin{prop}\label{trex1}
 Suppose that $\PPP=\FF[1]$ where $\FF\in{\mathcal Sh}(\AAA^1_{\bar k},\QQ)$ is a geometrically semisimple middle extension sheaf of generic rank $d$, Euler characteristic $-e$ and Swan conductor at infinity $c$ such that $\HH^2_c(\AAA^1_{\bar k},\FF)=0$ and all its slopes at infinity are $<1$ (e.g. $\FF$ tamely ramified at infinity). Let $S:=\{a_1,\ldots,a_s\}\subset \bar k$ be the set of ramification points of $\FF\otimes\bar k$, and $S_r:=S+\cdots +S$ ($r$ summands). Then for every $r\geq 1$ $U_{\PPP,r}(\bar k)$ contains $\bar k-S_r$, and $C_{\PPP,r}$ is bounded by
$$(1+c)\sum_{i=0}^{r-1} \left[{{d+e-c+r-i-1}\choose{r}}-{{e+r-i-1}\choose{r}}\right]{r-1\choose i}.
$$
\end{prop}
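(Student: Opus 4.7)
The plan is as follows. Set $\GGG[1]:=\FT_\psi(\PPP)$ for a fixed non-trivial $\psi:k\to\QQ^\star$; since $\FF$ is a geometrically semisimple middle extension whose slopes at infinity are all strictly less than $1$, the Fourier transform $\GGG$ is again (geometrically) a middle extension sheaf, and in fact lisse on $\AAA^1_{\bar k}-\{0\}$ by the stationary phase formula. The generic rank of $\GGG$ equals $\dim\HH^1_c(\AAA^1_{\bar k},\FF\otimes\LL_{\psi_{-a}})$ for generic $a$; by the Euler--Poincar\'e formula on $\AAA^1$, using that $\Swan_\infty(\FF\otimes\LL_{\psi_{-a}})=d$ for $a\neq 0$ (because $\LL_{\psi_{-a}}$ has slope $1$ at $\infty$ while $\FF$ has all slopes strictly less than $1$), and that $\HH^0_c=\HH^2_c=0$ generically, this evaluates to $d+e-c$. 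Similarly, the rank of $\GGG$ at $0$ equals $\dim\HH^1_c(\AAA^1_{\bar k},\FF)=e$.

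For the open set assertion, I invoke Proposition \ref{ULr}: $a\in U_{\PPP,r}(\bar k)$ is equivalent to $\HH^2_c(\AAA^1_{\bar k},\LL_{\psi_{-a}}\otimes\R(\wedge^i\rho)\GGG)=0$ for every $i=0,\ldots,r-1$. Rather than verifying this through $\GGG$, I show the stronger geometric statement that $\HHH^0(\PPP^{\ast r})$ is supported on $S_r$; since each $\R(\ast\wedge^i\rho)\PPP$ is a perverse direct summand of $\PPP^{\ast r}$, the same support bound transfers to each summand, which forces $a\in U_{\PPP,r}(\bar k)$ for $a\notin S_r$. I prove the support statement by induction on $r$, writing $\PPP^{\ast r}=\PPP\ast\PPP^{\ast(r-1)}$ and applying the stalk formula $(\PPP_1\ast\PPP_2)_{\bar t}\cong\R\Gamma_c(\AAA^1_{\bar k},\FF_1\otimes\tau_t^\star\FF_2)[1]$ with $\tau_t(x)=t-x$ (having checked inductively that the $\HHH^{-1}$ part of $\PPP^{\ast(r-1)}$ is ramified only on $S_{r-1}$). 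For $t\notin S_r$ the finite ramification loci of $\FF_1$ and $\tau_t^\star\FF_2$ are disjoint, and an Euler--Poincar\'e count shows that $\dim\HH^1_c$ equals its generic value at $t$ (the auxiliary terms $\HH^0_c$ and $\HH^2_c$ remaining zero thanks to the hypothesis $\HH^2_c(\AAA^1_{\bar k},\FF)=0$), which yields $\HHH^0=0$ at $t$.

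For the rank bound, I exploit the Fourier identity $\FT_\psi(\R(\ast\wedge^i\rho)\PPP)\cong\R(\wedge^i\rho)\GGG[1]$ established in the proof of Proposition \ref{ULr}. By Remark \ref{rank} applied with $n=d+e-c$ (generic rank of $\GGG$) and $n=e$ (rank of $\GGG_0$), the generic rank of $\R(\wedge^i\rho)\GGG$ equals $\binom{d+e-c+r-i-1}{r}\binom{r-1}{i}$ while its rank at $0$ equals $\binom{e+r-i-1}{r}\binom{r-1}{i}$, so the drop at $0$ is precisely the bracketed difference in the stated bound. The generic rank of the perverse cohomology sheaf $\HHH^{-1}(\R(\ast\wedge^i\rho)\PPP)$ equals the generic rank of $\FT_{\bar\psi}(\R(\wedge^i\rho)\GGG[1])[-1]$; an Euler--Poincar\'e computation expresses the latter as the total rank drop of $\R(\wedge^i\rho)\GGG$ at $0$ (other finite ramification points contributing nothing since $\GGG$ is lisse on $\AAA^1_{\bar k}-\{0\}$) plus the Swan conductor of $\R(\wedge^i\rho)\GGG$ at infinity. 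Bounding the Swan contribution by $c$ times the drop at $0$, via the slope bookkeeping for alternating powers, produces the overall factor $1+c$; summing over $i$ yields the asserted bound for $C_{\PPP,r}$.

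The principal obstacle is the careful control of Swan conductors at infinity of the derived alternating powers $\R(\wedge^i\rho)\GGG$, which requires analyzing the slope decomposition of $\GGG$ at $\infty$ and tracing how slopes behave under alternating powers. The factor $1+c$ in the bound encodes exactly this Swan contribution, together with the contribution of the drop at $0$ coming from Fourier inversion. A subsidiary technical point is the inductive verification that the convolution powers $\PPP^{\ast k}$ have $\HHH^{-1}$ part with finite ramification locus contained in $S_k$, which is needed to feed the stalk formula back into the induction in the second step.
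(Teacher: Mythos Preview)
Your setup of $\GGG$ and the computation of its generic rank $d+e-c$ and rank $e$ at $0$ match the paper exactly.

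For the open set $U_{\PPP,r}$, you take a genuinely different route. The paper does \emph{not} argue by induction on the convolution powers; instead it uses Laumon's local Fourier transform to identify the $I_\infty$-representation of $\GGG$ as $\bigoplus_{s\in S}\FF_s\otimes\LL_{\psi_s}$ with each $\FF_s$ of slopes $<1$, so that $\R(\wedge^i\rho)\GGG$ at $\infty$ decomposes as $\bigoplus_{s\in S_r}\mathcal N_s\otimes\LL_{\psi_s}$ with $\mathcal N_s$ of slopes $<1$. Then for $a\notin S_r$ the twist $\LL_{\psi_{-a}}\otimes\R(\wedge^i\rho)\GGG$ is totally wild at $\infty$, forcing $\HH^2_c=0$ and hence $a\in U_{\PPP,r}$. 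Your inductive approach can be made to work, but the step ``$\HH^0_c$ and $\HH^2_c$ remain zero thanks to $\HH^2_c(\AAA^1_{\bar k},\FF)=0$'' is not justified as stated: that hypothesis on $\FF$ alone does not kill $\HH^2_c$ of the tensor product. What actually works is that the Euler characteristic of the stalk is constant for $t\notin S_r$, combined with the fact that $\HHH^{-1}(\PPP^{\ast r})$ has no punctual sections (perversity), so its stalk dimension is bounded above by the generic rank, forcing $\dim\HHH^0_t\leq 0$.

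More seriously, there is a gap in your rank bound. Your Euler--Poincar\'e expression ``drop of $\R(\wedge^i\rho)\GGG$ at $0$ plus Swan conductor at infinity'' is not the right formula, and the claimed bound ``Swan at infinity $\leq c\cdot(\text{drop at }0)$'' is false in general (the slopes of $\GGG$ at $\infty$ are essentially $1$, not $\leq c$). The correct computation, which the paper carries out, is
\[
\dim\HH^1_c(\AAA^1_{\bar k},\LL_{\psi_{-a}}\otimes\R(\wedge^i\rho)\GGG)=\Swan_0+\Swan_\infty(\text{twisted})-N_0,
\]
where $\Swan_\infty(\text{twisted})=N$ \emph{exactly} for $a\notin S_r$ (this uses the local Fourier description of $\GGG$ at $\infty$), giving $\Swan_0+(N-N_0)$. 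The factor $c$ then enters through the bound $\Swan_0(\R(\wedge^i\rho)\GGG)\leq c\,(N-N_0)$, which holds because all slopes of $\GGG$ at $0$ are $\leq\Swan_0(\GGG)=c$ and the $I_0$-invariant part of $\R(\wedge^i\rho)\GGG$ has rank $N_0$. Note that the paper's local-Fourier analysis of $\GGG$ at $\infty$ does double duty: it gives both the open set $U_{\PPP,r}$ and the exact value $\Swan_\infty=N$ needed here, so your alternative argument for the open set does not actually save you this analysis.
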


\begin{proof}
 First of all, $\PPP$ does not have negligible subquotients: since $\HH^2_c(\AAA^1_{\bar k},\FF)=0$ $\PPP\otimes\bar k$ can not have constant subsheaves, and it can not have non-trivial Artin-Schreier subsheaves either since $1$ is not a slope at infinity.

Therefore $\mathrm{FT}_\psi\PPP=\GGG[1]$, where $\GGG\in\Sh(\AAA^1_k,\QQ)$ is a middle extension sheaf. Laumon's local Fourier transform theory \cite[2.4]{laumon1987transformation} implies that $\GGG$ is smooth on $\GG_{m,\bar k}$, since $1$ is not a slope of $\FF$ at infinity. Its generic rank is $\dim\HH^1_c(\AAA^1_{\bar k},\FF\otimes\LL_{\psi_a})$ for any $a\neq 0$, that is, $\dim\HH^1_c(\AAA^1_{\bar k},\FF)-\Swan_\infty\FF+\Swan_\infty(\FF\otimes\LL_{\psi_a})=e-c+d$ (since all slopes at infinity of $\FF\otimes\LL_{\psi_a}$ are equal to $1$). Its rank at $0$ is $\dim\HH^1_c(\AAA^1_{\bar k},\FF)=e$. At infinity, it is the direct sum, for $s\in S$, of ${\FF}_s\otimes\LL_{\psi_s}$, where ${\FF}_s$ is the local Fourier transform operator $\FF^{(0,\infty)}$ \cite[2.4.2.3]{laumon1987transformation} applied to the local monodromy of $\FF$ at $s$. Its Swan conductor at $0$ is $c$, since for generic $a$ the dimension of $\HH^1_c(\AAA^1_{\bar k},\GGG\otimes\LL_{\psi_a})$ is $d$ (by the involutivity of Fourier transform) and, by the Euler-Poincar\'e formula \cite[Expos\'e X, Corollaire 7.12]{grothendieck1977cohomologie}, it is also equal to $\Swan_\infty(\GGG\otimes\LL_{\psi_a})+\Swan_0(\GGG\otimes\LL_{\psi_a})-\dim\GGG_0=(e-c+d)+\Swan_0\,\GGG-e$.

Then for every $i=0,\ldots,r-1$ the sheaf $\R(\wedge^i\rho)\GGG$ is smooth on $\GG_{m,\bar k}$, has generic rank ${{d+e-c+r-i-1}\choose{r}}{r-1\choose i}$ (cf. remark \ref{rank}), its rank at $0$ is ${{e+r-i-1}\choose{r}}{r-1\choose i}$ and its monodromy action at infinity splits as a direct sum ${\mathcal N}_s\otimes\LL_{\psi_s}$ for $s\in S_r$, where ${\mathcal N}_s$ is a representation of $I_\infty$ whose slopes are all $<1$. In particular, for every $a\in\bar k-S_r$, the sheaf $\LL_{\psi_{-a}}\otimes\R(\wedge^i\rho)\GGG$ is totally wild at infinity, so $a\in U_{\LL,r}(\bar k)$ by proposition \ref{ULr}.

Furthermore, the dimension of $\HH^1_c(\AAA^1_c,\LL_{\psi_{-a}}\otimes\R(\wedge^i\rho)\GGG)$ is, by the Euler-Poincar\'e formula, equal to $$\Swan_0(\LL_{\psi_{-a}}\otimes\R(\wedge^i\rho)\GGG)+$$
$$+\Swan_\infty(\LL_{\psi_{-a}}\otimes\R(\wedge^i\rho)\GGG)-{{e+r-i-1}\choose{r}}{r-1\choose i}\leq
$$
$$
\leq c\left[{{d+e-c+r-i-1}\choose{r}}{r-1\choose i}-{{e+r-i-1}\choose{r}}{r-1\choose i}\right]+
$$
$$+{{d+e-c+r-i-1}\choose{r}}{r-1\choose i}-{{e+r-i-1}\choose{r}}{r-1\choose i}\leq$$
$$
\leq (1+c)\left[{{d+e-c+r-i-1}\choose{r}}-{{e+r-i-1}\choose{r}}\right]{r-1\choose i}.
$$
since the Swan conductor of $\GGG$ at $0$ (and therefore all its slopes) are less than or equal to $c$. We conclude by applying the formula for $C_{\PPP,r}$ in the proof of proposition \ref{UC}.
\end{proof}

Our first example improves \cite[Corollary 4]{rlwan2010}:

\begin{exa}
 Let $g\in k[x]$ be a polynomial of degree $d$ prime to $p$ and $\PPP=\FF[1]$, where $\FF$ is the kernel of the trace map $g_\star\QQ\to\QQ$. Let $S$ be the set of critical values of $g$, and $S_r=S+\cdots+S$ ($r$ summands). Then $\bar k-S_r\subseteq U_{\PPP,r}(\bar k)$ for every $r\geq 1$. Therefore, for every $m\geq 1$ and every $t\in k_m$ which is not the sum of $r$ critical values of $g$ (i.e. such that the affine hypersurface $g(x_1)+\cdots+g(x_r)=t$ in $\AAA^r_{\bar k}$ is smooth) we have
$$
\left| \#\{x\in k_{mr}|\Tr_{k_{mr}/k_m}(g(x))=t\}-q^{m(r-1)}\right|
\leq\sum_{i=0}^{r-1} {{d+r-i-2}\choose{r}}{{r-1}\choose i} q^{\frac{m(r-1)}{2}}.
$$

In particular, if the affine hypersurface $g(x_1)+\cdots+g(x_r)=0$ in $\AAA^r_{\bar k}$ is smooth we have
$$
\left|\#\{(x,y)\in k_{mr}^2|y^{q^m}-y=g(x)\}-q^{mr}\right|
\leq \sum_{i=0}^{r-1} {{d+r-i-2}\choose{r}}{{r-1}\choose i} q^{\frac{m(r+1)}{2}}.
$$
\end{exa}

\begin{proof}
 The left hand side is $|f^{\Tr,r}_{g_\star\QQ}(k_m,t)-f^{\Tr,r}_{\QQ}(k_m,t)|=|f^{\Tr,r}_{\FF}(k_m,t)|$. By Proposition \ref{ASformula} and the comment after it, $f^{\Tr,r}_{\QQ}=\kappa_{q^{r-1}}\cdot f_{\QQ}$, so $f^{\Tr,r}_{\QQ}(k_m,t)=q^{m(r-1)}$.

On the other hand, $\FF$ has rank $d-1$ and satisfies the hypotheses of proposition \ref{trex1} with $e=c=0$, since it is tamely and totally ramified at infinity (the inertia group at infinity acts via the direct sum of all its non-trivial characters with trivial $d$-th power) and there is an exact sequence $0\to\FF\to g_\star\QQ\to\QQ\to0$ with $\dim\HH^1_c(\AAA^1_{\bar k},g_\star\QQ)=\dim\HH^1_c(\AAA^1_{\bar k},\QQ)=0$. The first inequality follows. The second one is an easy consequence of the identity
$$
\#\{(x,y)\in k_{mr}^2|y^{q^m}-y=g(x)\}=q^m\cdot\#\{x\in k_{mr}|\Tr_{k_{mr}/k_m}(g(x))=0\}.
$$
\end{proof}

\begin{exa}
 Let $\chi:k^\star\to\QQ^\star$ be a non-trivial multiplicative character of order $n$, $g\in k[x]$ a non-constant polynomial which has no roots in $\bar k$ with multiplicity divisible by $n$, $\FF:=\LL_{\chi(g)}=g^\star\LL_\chi$ and $\PPP=\FF[1]$. Let $S\subseteq \bar k$ be the set of roots of $g$, and $S_r=S+\cdots+S$ ($r$ summands). Then $\bar k-S_r\subseteq U_{\PPP,r}(\bar k)$ for every $r\geq 1$. In particular, for every $m\geq 1$ and every $t\in k_m$ which is not the sum of $r$ roots of $g$,
$$
\left| \sum_{\Tr_{k_{mr}/k_m}(x)=t}\chi(\Nm_{k_{mr}/k}(g(x)))\right|
\leq\sum_{i=0}^{r-1}{{a+r-i-2}\choose{r-1}}{{r-1}\choose i} q^{\frac{m(r-1)}{2}}.
$$
where $a$ is the number of distinct roots of $g$ in $\bar k$.
\end{exa}

\begin{proof}
 The sheaf $\FF$ is a middle extension of rank $1$ ramified at the roots of $g$ and therefore it is not isomorphic to an Artin-Schreier sheaf. We can then apply Proposition \ref{trex1} to it, where $d=1$, $e=a-1$ by the Euler-Poincar\'e formula and $c=0$ (since the inertia group at infinity acts on $\FF$ via a power of the tame character $\chi$).
\end{proof}

\begin{prop}\label{trex2}
 Under the hypotheses of proposition \ref{trex1}, suppose further that the action of the inertia group at every point of $\AAA^1_{\bar k}$ on the generic stalk of $\FF$ (modulo its invariant subspace under this action) is a successive extension of a fixed tame character $\chi$ of order $n$. Then $U_{\PPP,r}=\AAA^1_k$ for every $r\geq 1$ which is not divisible by $n$. 
\end{prop}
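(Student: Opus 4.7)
By Proposition \ref{ULr}, to establish $U_{\PPP,r}=\AAA^1_k$ it suffices to show that $\HH^2_c(\AAA^1_{\bar k},\LL_{\psi_{-a}}\otimes\R(\wedge^i\rho)\GGG)=0$ for every $a\in\bar k$ and every $i=0,\ldots,r-1$, where $\GGG[1]=\FT_\psi\PPP$ and $\rho$ is the standard representation of $\mathfrak{S}_r$. The plan is to reduce this to a purely local condition at $\infty$. Since $\PPP$ is geometrically semisimple and the operations of Fourier transform, tensor product, and projection onto $\mathfrak{S}_r$-isotypic components all preserve semisimplicity in characteristic zero, the generic monodromy of $\LL_{\psi_{-a}}\otimes\R(\wedge^i\rho)\GGG$ is a semisimple $\pi_1^{\mathrm{geom}}$-representation. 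For such a sheaf, $\HH^2_c(\AAA^1_{\bar k},-)$ computes coinvariants of $\pi_1^{\mathrm{geom}}$ on the generic stalk, and this vanishes as soon as the generic stalk has no trivial $I_\infty$-subrepresentation (for then it has no trivial $\pi_1^{\mathrm{geom}}$-subrepresentation either, hence, by semisimplicity, no trivial quotient).

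By Laumon's theory of local Fourier transforms \cite[2.4]{laumon1987transformation}, the assumption that $\FF$ is tame everywhere with $V_s/V_s^{I_s}$ a successive extension of $\chi$ at each $s\in S$ implies that $\GGG$ at $\infty$ decomposes as $\bigoplus_{s\in S}\FF_s\otimes\LL_{\psi_s}$, where $\FF_s=\FF^{(s,\infty)}(V_s/V_s^{I_s})$ is, by exactness of the LFT, a successive extension of the tame character $\tilde\chi:=\FF^{(s,\infty)}(\chi)$ of $I_\infty$. Translation by $s$ does not affect the local structure at $\infty$, so $\tilde\chi$ is independent of $s$; Laumon's explicit formula for the LFT of a tame character moreover shows that $\tilde\chi$ has the same order $n$ as $\chi$.

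Then $\GGG^{\otimes r}$ at $\infty$ is a direct sum, indexed by ordered tuples $(s_1,\ldots,s_r)\in S^r$, of pieces $(\FF_{s_1}\otimes\cdots\otimes\FF_{s_r})\otimes\LL_{\psi_{s_1+\cdots+s_r}}$, and the tame part of each such piece is a successive extension of $\tilde\chi^r$. The group $\mathfrak{S}_r$ acts by permuting ordered tuples, so $\R(\wedge^i\rho)\GGG$ at $\infty$ inherits a compatible decomposition in which every piece still has tame part a successive extension of $\tilde\chi^r$ and $\LL_\psi$-twist $\LL_{\psi_{s_1+\cdots+s_r}}$. Upon further tensoring with $\LL_{\psi_{-a}}$, this twist becomes $\LL_{\psi_{s_1+\cdots+s_r-a}}$: if $s_1+\cdots+s_r\neq a$ it is totally wild of slope $1$, so the corresponding piece has no $I_\infty$-invariants; if $s_1+\cdots+s_r=a$ the piece is tame and is a successive extension of $\tilde\chi^r$, which is non-trivial precisely when $n\nmid r$.

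Hence, under the assumption $n\nmid r$, every piece has vanishing $I_\infty$-invariants, and so does the full generic stalk of $\LL_{\psi_{-a}}\otimes\R(\wedge^i\rho)\GGG$ for every $a\in\bar k$. The required $\HH^2_c$ then all vanish, giving $U_{\PPP,r}=\AAA^1_k$. The main technical subtlety to verify carefully will be the claim that the LFT of a representation whose non-trivial part is a successive extension of $\chi$ yields a successive extension of a single tame character $\tilde\chi$ of order exactly $n$, independent of the point $s$; this rests on Laumon's explicit description of the LFT on tame characters together with the exactness of the LFT functor on representations with no trivial subobject.
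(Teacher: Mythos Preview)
Your argument is correct and follows essentially the same route as the paper: both rely on the description of the monodromy of $\GGG$ at $\infty$ from the proof of Proposition~\ref{trex1} and conclude that, after tensoring with $\LL_{\psi_{-a}}$, each local piece is a successive extension of a non-trivial tame character tensored with a (possibly trivial) Artin-Schreier character, hence contributes nothing to $\HH^2_c$. The paper is slightly more explicit in identifying your $\tilde\chi$ as $\bar\chi$, and it bypasses your global semisimplicity detour by observing directly that a successive extension of non-trivial characters of $I_\infty$ has vanishing coinvariants; this is worth noting because your claim that tensor products preserve geometric semisimplicity is not a general fact about representations and would require Deligne's purity theorem (applicable here since the paper works throughout in the mixed setting) to justify.
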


\begin{proof}
 In this case, since the local Fourier transform of a tame character is its conjugate, the representations ${\mathcal F}_s$ in the proof of proposition \ref{trex1} are successive extensions of the character $\bar\chi$, so the representations ${\mathcal N}_s$ appearing (tensored with Artin-Schreier characters) in the monodromy at infinity of $\R(\wedge^i\rho)\GGG$ are successive extensions of the character $\bar\chi^r$. In particular, the action of the inertia group $I_\infty$ on $\R(\wedge^i\rho)\GGG$ has no invariants if $\chi^r$ is non-trivial (since the tensor product of a non-trivial tame character and a (possibly trivial) Artin-Schreier character can not be trivial). 
\end{proof}

\begin{exa}
 Let $g\in k[x]$ be a polynomial of degree $d$ prime to $p$, and let $\FF$ be the kernel of the trace map $g_\star\QQ\to\QQ$. Suppose that $p>2$ and the derivative $g'$ has no multiple roots. Then for every odd $r\geq 1$, every $m\geq 1$ and every $t\in k_m$ we have
$$
\left| \#\{x\in k_{mr}|\Tr_{k_{mr}/k_m}(g(x))=t\}-q^{m(r-1)}\right|
\leq\sum_{i=0}^{r-1} {{d+r-i-2}\choose{r}}{{r-1}\choose i} q^{\frac{m(r-1)}{2}}
$$
and, in particular,
$$
\left|\#\{(x,y)\in k_{mr}^2|y^{q^m}-y=g(x)\}-q^{mr}\right|
\leq \sum_{i=0}^{r-1} {{d+r-i-2}\choose{r}}{{r-1}\choose i} q^{\frac{m(r+1)}{2}}.
$$
\end{exa}

\begin{proof}
 In this case the monodromy of $\FF$ at each ramified finite point is a successive extension of the quadratic character by the hypothesis on $g'$.
\end{proof}

\begin{exa}
 Let $\chi:k^\star\to\QQ^\star$ be a non-trivial multiplicative character of order $n$, $g\in k[x]$ a square-free polynomial of degree $d$ and $\FF:=\LL_{\chi(g)}=g^\star\LL_\chi$. Then for every $r\geq 1$ not divisible by $n$, every $m\geq 1$ and every $t\in k_m$ we have
$$
\left| \sum_{\Tr_{k_{mr}/k_m}(x)=t}\chi(\Nm_{k_{mr}/k}(g(x)))\right|
\leq\sum_{i=0}^{r-1}{{d+r-i-2}\choose{r-1}}{{r-1}\choose i} q^{\frac{m(r-1)}{2}}.
$$
\end{exa}

\begin{proof}
 In this case the inertia groups at all ramified finite points act on $\FF$ via $\chi$. Additionally, since $g$ is square-free, it has $d$ distinct roots on $\bar k$.
\end{proof}

In order to get more precise results, we need to consider the global monodromy. Suppose that $\PPP\in\Perv(\AAA^1_k,\QQ)$ is pure of weight $1$ and does not have any negligible subquotient. Then $\FT_\psi\PPP=\GGG[1]$, where $\GGG\in\Sh(\AAA^1_k,\QQ)$ is a middle extension sheaf, pure of weight $1$ (as a middle extension). Let $V$ be the generic stalk of $\GGG$, $G\subseteq\mathrm{GL}(V)$ its global geometric monodromy group and $G_0\subseteq G$ its unit connected component. Since $\GGG$ is pure, its restriction to any open set on which it is smooth is geometrically semisimple \cite[Th\'eor\`eme 3.4.1]{deligne1980conjecture} and therefore $G_0$ is a semisimple algebraic group \cite[Corollaire 1.3.9]{deligne1980conjecture}. 

\begin{prop}
 Under the previous hypotheses, $0\in U_{\PPP,r}$ if and only if for every $i=0,\ldots,r$ the representation $\Sym^{r-i} V\otimes\wedge^i V$ of $G$ has no non-zero invariants.
\end{prop}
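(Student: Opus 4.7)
The plan is to apply Proposition \ref{ULr} to the specific point $a = 0$. Since $\LL_{\psi_0}$ is geometrically trivial, its condition (2) reduces to the vanishing
$$
\HH^2_c(\AAA^1_{\bar k}, \Sym^{r-i}\GGG \otimes \wedge^i\GGG) = 0 \quad \text{for every } i = 0, \ldots, r,
$$
so the task reduces to matching each such vanishing with the vanishing of $(\Sym^{r-i}V \otimes \wedge^i V)^G$.

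The key step is to identify these top cohomology groups with coinvariants of the monodromy representation. I would first choose a dense open $j : U \hookrightarrow \AAA^1_k$ on which $\GGG$ is smooth; then $\HH := \Sym^{r-i}\GGG \otimes \wedge^i\GGG$ is also smooth on $U$, and the natural inclusion $j_!j^\star\HH \hookrightarrow \HH$ has punctual cokernel. Because $\HH^2_c$ vanishes on skyscrapers, the long exact sequence gives
$$
\HH^2_c(\AAA^1_{\bar k}, \HH) \cong \HH^2_c(U_{\bar k}, j^\star\HH) \cong (\Sym^{r-i}V \otimes \wedge^i V)_{\pi_1(U_{\bar k})}(-1),
$$
via the standard identification of the top compactly supported cohomology of a smooth sheaf on a smooth affine curve with the (Tate-twisted) coinvariants of the geometric fundamental group.

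To finish, I would invoke the semisimplicity of $G_0$ noted immediately before the statement. Since $G/G_0$ is finite, $G$ is reductive, so every finite-dimensional algebraic $G$-representation is completely reducible. This has two consequences: first, the image of $\pi_1(U_{\bar k})$ in $G$ is Zariski-dense, and the subspace spanned by the vectors $gv - v$ is algebraic in $g$, so the coinvariants under this image coincide with the coinvariants under all of $G$; second, on a completely reducible representation, coinvariants vanish if and only if invariants do. Applying both observations to $\Sym^{r-i}V \otimes \wedge^i V$ produces the desired equivalence.

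The main technical subtlety to watch is the first identification: $\Sym^{r-i}\GGG \otimes \wedge^i\GGG$ is not necessarily a middle extension, so one cannot directly apply a formula for $\HH^2_c$ of a middle extension. Passing to $U$ circumvents this issue because the failure of the middle-extension property is concentrated in a punctual subsheaf, which is invisible to $\HH^2_c$.
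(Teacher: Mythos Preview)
Your proof is correct and follows the same line as the paper's: apply Proposition \ref{ULr} at $a=0$, identify $\HH^2_c$ with $\pi_1(U)$-coinvariants, and use that invariants and coinvariants have the same dimension. The paper compresses this into two sentences, whereas you spell out the passage from $\AAA^1$ to $U$ via the punctual cokernel and justify the invariants/coinvariants equality through the reductivity of $G$; both are details the paper leaves implicit.
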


\begin{proof}
 This is a restatement of proposition \ref{ULr}, since $\HH^2_c(\AAA^1_{\bar k},\Sym^{r-i}\GGG\otimes\wedge^i\GGG)$ is the coinvariant space of $\Sym^{r-i} V\otimes\wedge^i V$ under the action of $\pi_1(U)$ (where $U$ is the largest open subset of $\AAA^1_k$ on which $\GGG$ is smooth), which has the same dimension as the invariant space.
\end{proof}

\begin{prop}\label{gm}
 Under the previous hypotheses, suppose that $G/G_0$ has order prime to $p$. Then $\GG_{m,k}\subseteq U_{\PPP,r}$ for every $r\geq 1$.
\end{prop}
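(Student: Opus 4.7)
The plan is to verify the criterion of Proposition \ref{ULr}(2) at every $a\in\bar k^\star$: the task is to show that $\HH^2_c(\AAA^1_{\bar k},\LL_{\psi_{-a}}\otimes\Sym^{r-i}\GGG\otimes\wedge^i\GGG)=0$ for each $i=0,\ldots,r$. Let $U\subseteq\AAA^1_k$ be the maximal open subset on which $\GGG$ is smooth, let $V$ be its generic stalk and $\rho\colon\pi_1(U,\bar\eta)\to\GL(V)$ the monodromy representation with Zariski closure $G$, and set $W:=\Sym^{r-i}V\otimes\wedge^i V$. Since $\LL_{\psi_{-a}}$ is lisse on all of $\AAA^1_{\bar k}$, the sheaf $\LL_{\psi_{-a}}\otimes\Sym^{r-i}\GGG\otimes\wedge^i\GGG$ is a middle extension from $U$ with generic stalk $W\otimes\LL_{\psi_{-a}}$; a short standard computation (the skyscraper quotient of $j_\star$ over $j_!$ contributes nothing in degree $2$) identifies this $\HH^2_c$ with the Tate-twisted coinvariants $(W\otimes\LL_{\psi_{-a}})_{\pi_1(U)}(-1)$.

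These coinvariants have the same dimension as the invariants $(W^\vee\otimes\LL_{\psi_a})^{\pi_1(U)}$ of the dual representation, so it suffices to rule out any nonzero invariant there. A nonzero invariant would produce a line $L\subseteq W^\vee$ on which $\pi_1(U)$ acts through the character $\chi_{\psi_{-a}}$ of $\LL_{\psi_{-a}}$, which has order $p$ because $a\neq 0$ and $\psi$ is nontrivial. The line $L$ is preserved by $\rho(\pi_1(U))$, hence by its Zariski closure $G$ acting on $W^\vee$ via the polynomial representation dual to $\Sym^{r-i}\otimes\wedge^i$; consequently $G$ acts on $L$ by an algebraic character $G\to\GG_m$ whose restriction to $\pi_1(U)$ recovers $\chi_{\psi_{-a}}$.

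The hypothesis now yields a contradiction. Every algebraic character of $G$ factors through $G^{ab}=G/[G,G]$, and since $G_0$ is semisimple (as recalled in the paragraph preceding the statement) one has $[G_0,G_0]=G_0\subseteq[G,G]$, so $G^{ab}$ is a quotient of the finite group $G/G_0$ and therefore has order prime to $p$ by hypothesis. No algebraic character of $G$ can have order $p$, contradicting the character exhibited above. This forces $\HH^2_c$ to vanish for every $a\in\bar k^\star$ and every $i$, so $\GG_{m,\bar k}\subseteq U_{\PPP,r}(\bar k)$, and since $U_{\PPP,r}$ is defined over $k$ the conclusion $\GG_{m,k}\subseteq U_{\PPP,r}$ follows.

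The only substantive step is this final one: the hypothesis on $G/G_0$ combined with the semisimplicity of $G_0$ forces every algebraic character of $G$ to have order prime to $p$, which is incompatible with the order-$p$ Artin--Schreier character $\chi_{\psi_{-a}}$. Once this is in hand, everything else---the middle-extension computation of $\HH^2_c$ and the Zariski-density argument identifying $\pi_1(U)$-stable lines with $G$-stable ones---is routine.
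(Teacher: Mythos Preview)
Your proof is correct and follows essentially the same route as the paper's: both invoke the criterion of Proposition~\ref{ULr}, translate nonvanishing of $\HH^2_c$ into the existence of an order-$p$ subcharacter of (a tensor construction on) $V$ via Zariski density, and then derive a contradiction from the semisimplicity of $G_0$ together with the prime-to-$p$ hypothesis on $G/G_0$. The paper is terser---it phrases the contradiction as ``the kernel $G'$ has index $p$, so $G/G_0$ contains the $p$-group $G/G'$''---whereas you route the argument through $G^{ab}$ and are slightly more careful in passing to $W^\vee$ to extract a genuine sub-line rather than a quotient, but these are cosmetic differences.
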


\begin{proof}
 Otherwise there would exist some $a\in\bar k^\star$ and some $i=0,\ldots,r$ such that the representation $\LL_{\psi_{-a}}\otimes\Sym^{r-i} V\otimes\wedge^i V$ of $G$ has non-zero invariants. In other words, the representation $\Sym^{r-i} V\otimes\wedge^i V$ contains a subcharacter $\LL_{\psi_a}$ of order $p$. Then its kernel $G'$ would be a closed normal subgroup of $G$ of index $p$, so $G/G_0$ would contain the $p$-group $G/G'$.
\end{proof}

\begin{exa}
 Let $g\in k[x]$ be a polynomial of degree $d$ prime to $p$ and $\PPP=\FF[1]$ where $\FF$ is the kernel of the trace map $g_\star\QQ\to\QQ$. Suppose that $p>2d-1$ and the $(d-1)(d-2)$ differences between pairs of critical values of $g$ are all distinct. Let $s$ be the sum of the $d-1$ critical values of $g$. Then 
\begin{enumerate}
 \item $\AAA^1_k-\{\frac{rs}{d-1}\}\subseteq U_{\PPP,r}$ for any $r$.
\item $\frac{rs}{d-1}\in U_{\PPP,r}$ for any $r$ if $d$ is even, and for any $r\neq d-1$ if $d$ is odd.
\end{enumerate}
\end{exa}

\begin{proof}
 By \cite[Theorem 7.9.6]{katz1990esa}, in this case $G_0=\mathrm{SL}(V)$, and by \cite[Proposition 4.1]{rlwan2010}, $G/G_0$ has order $1,2,p$ or $2p$ in the cases $s=0, d\text{ odd}; s=0, d\text{ even}; s\neq 0, d\text{ odd and }s\neq 0, d\text{ even}$. For $s=0$, (1) is a consequence of Proposition \ref{gm} and (2) is proven in \cite[Corollary 4.2]{rlwan2010}.

 Suppose that $s\neq 0$, and let $h(x)=g(x)-s/(d-1)$ be the translation of $g$ by $-s/(d-1)$. Then the critical values of $h$ add up to $0$. Let $\FF'=\ker\,(\mathrm{tr}:h_\star\QQ\to\QQ)=\tau_{-s/(d-1)}\FF$, then its Fourier transform $\GGG'$ is $\GGG\otimes\LL_{\psi_{-s/(d-1)}}$, so $\GGG=\GGG'\otimes\LL_{\psi_{s/(d-1)}}$. Therefore $\LL_{\psi_{-a}}\otimes\Sym^{r-i}\GGG\otimes\wedge^i\GGG=\LL_{\psi_{rs/(d-1)-a}}\otimes\Sym^{r-i}\GGG'\otimes\wedge^i\GGG'$. As seen above, the monodromy group of $\GGG'$ is either $\mathrm{SL}(V)$ or $\{\pm 1\}\times\mathrm{SL}(V)$ and does not have characters of order $p$, so $\LL_{\psi_{rs/(d-1)-a}}\otimes\Sym^{r-i}\GGG'\otimes\wedge^i\GGG'$ can only have non-zero invariants for $a=rs/(d-1)$, and in that case only for $r=d-1$ and $d$ odd \cite[Corollary 4.2]{rlwan2010}.
\end{proof}

\begin{exa}
 Let $g\in k[x]$ be a polynomial of degree $d\geq 3$ prime to $p$, $\psi:k\to\QQ^\star$ a non-trivial additive character and $\PPP=\LL_{\psi(g)}[1]$. Suppose that $p>2d+1$ and $g(x+a)+b$ is not odd for any $a,b\in\bar k$. Let $a_{d-1}$ be the coefficient of $x^{d-1}$ in $g$. Then 
\begin{enumerate}
\item $\AAA^1_k-\{\frac{ra_{d-1}}{d}\}\subseteq U_{\PPP,r}$ for any $r\geq 1$.
 \item $\frac{ra_{d-1}}{d}\in U_{\PPP,r}$ for any $r\neq d-1$.
\end{enumerate}
In all such cases, for $t\in U_{\PPP,r}(k_m)$,
$$
\left| \sum_{\Tr_{k_{mr}/k_m}(x)=t}\psi(\Tr_{k_{mr}/k}(g(x)))\right|\leq\frac{1}{d-1}\sum_{i=0}^{r-1} {{d+r-i-2}\choose{r}}{{r-1}\choose i} q^{\frac{m(r-1)}{2}}.
$$
\end{exa}

\begin{proof} The Swan conductor of $\LL_{\psi(g)}$ at infinity is $d>1$, so $\PPP$ is not geometrically an Artin-Schreier object. Let $h(x)=g(x-\frac{a_{d-1}}{d})$. Then the coefficient of $x^{d-1}$ in $h$ is $0$. Let $\PPP'=\LL_{\psi(h)}[1]=\tau_{-a_{d-1}/d}\PPP$, $\GGG[1]$ and $\GGG'[1]$ the Fourier transforms of $\PPP$ and $\PPP'$ respectively. Then $\GGG'=\GGG\otimes\LL_{\psi_{-a_{d-1}/d}}$ and $\LL_{\psi_{-a}}\otimes\Sym^{r-i}\GGG\otimes\wedge^i\GGG=\LL_{\psi_{ra_{d-1}/d-a}}\otimes\Sym^{r-i}\GGG'\otimes\wedge^i\GGG'$. By \cite[Theorem 19]{katz-monodromy}, the monodromy group of $\GGG'$ is $G=\mathrm{SL}(V)$. Since $\Sym^{r-i}V\otimes\wedge^i V=\Hom(\wedge^{d-1-i} V,\Sym^{r-i} V)$ only has $\mathrm{SL}(V)$-invariants for $r=d-1$ and $G$ does not have characters of order $p$, we conclude that $\HH^2_c(\AAA^1_{\bar k},\LL_{\psi_{-a}}\otimes\Sym^{r-i}\GGG\otimes\wedge^i\GGG)$ vanishes as long as $a\neq \frac{ra_{d-1}}{d}$ or $r\neq d-1$. 

In that case, since $\R(\wedge^i\rho)\GGG$ is smooth on $\AAA^1_k$ and all its slopes at infinity are $\leq \frac{d}{d-1}$ (because all slopes of $\GGG$ at infinity are equal to $\frac{d}{d-1}$ by \cite[Theorem 7.5.4]{katz1990esa}), by the Euler-Poincar\'e formula $\dim\,\HH^1_c(\AAA^1_{\bar k},\LL_{\psi_a}\otimes\R(\wedge^i\rho)\GGG)=\Swan_\infty(\LL_{\psi_a}\otimes\R(\wedge^i\rho)\GGG)-\mathrm{rank}(\LL_{\psi_a}\otimes\R(\wedge^i\rho)\GGG)\leq (\frac{d}{d-1}-1){{d+r-i-2}\choose{r}}{{r-1}\choose i}$ (cf. remark \ref{rank}), so
$$
C_{\PPP,r}\leq\frac{1}{d-1}\sum_{i=0}^{r-1} {{d+r-i-2}\choose{r}}{{r-1}\choose i}.
$$
\end{proof}

\begin{exa}
 Let $g\in k[x]$ be a polynomial of degree $d\geq 3$ prime to $p$, $\psi:k\to\QQ^\star$ a non-trivial additive character and $\PPP=\LL_{\psi(g)}[1]$. Suppose that $p>2d+1$ and $g(x+a)+b$ is odd for some $a,b\in\bar k$ (so $d$ is necessarily odd). Let $a_{d-1}$ be the coefficient of $x^{d-1}$ in $g$. Then 
\begin{enumerate}
\item $\AAA^1_k-\{\frac{ra_{d-1}}{d}\}\subseteq U_{\PPP,r}$ for any $r\geq 1$.
 \item $\frac{ra_{d-1}}{d}\in U_{\PPP,r}$ for any $r\neq 2t$ for $t=1,\ldots,\frac{d-1}{2}$.
\end{enumerate}
In all such cases, for $t\in U_{\PPP,r}(k_m)$,
$$
\left| \sum_{\Tr_{k_{mr}/k_m}(x)=t}\psi(\Tr_{k_{mr}/k}(g(x)))\right|\leq\frac{1}{d-1}\sum_{i=0}^{r-1} {{d+r-i-2}\choose{r}}{{r-1}\choose i} q^{\frac{m(r-1)}{2}}.
$$
\end{exa}

\begin{proof}
 The proof is similar to the previous one. In this case, $G=\mathrm{Sp}(V)$ by \cite[Theorem 19]{katz-monodromy} so, by \cite[Lemma on p.18]{katz2001frobenius}, $\Sym^{r-i}V\otimes\wedge^i V=\Hom(\wedge^{i} V,\Sym^{r-i} V)$ only has $\mathrm{Sp}(V)$-invariants for even $r\leq d-1$. 
\end{proof}

\section{Examples on $\GG_{m,k}$}

In this section we will assume $G=\GG_{m,k}$. As in the $\AAA^1_k$ case, we will first determine the convolution Adams powers of negligible objects. By lemma \ref{negligible}, such an object is a twist of an object of the form $\pi_{d\star}\LL_\chi[1]$ where $\pi_d:\GG_{m,k_d}\to\GG_{m,k}$ is the projection and $\chi:k_d^\star\to\QQ^\star$ is a character which is not the pullback by the norm map of a multiplicative character of a proper subfield. In other words, $d$ is the smallest positive integer such that $q^d-1$ is a multiple of the order of $\chi$.

\begin{prop}\label{negligiblemult}
 Let $\PPP=\pi_{d\star}\LL_\chi[1]$, where $\chi:k_d^\star\to\QQ^\star$ is a character of order $n\geq 1$, $d$ is the smallest integer such that $n|q^d-1$ and $\pi_d:\GG_{m,k_d}\to \GG_{m,k}$ is the projection. Then
$$
\PPP^{[\ast r]}=\sum_{i=0}^{r-1}[q^{i\cdot deg}\otimes\PPP]
$$
for every $r\geq 1$.
\end{prop}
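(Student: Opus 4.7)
The plan is to use the injectivity of $\Phi$ on $K_0(\GG_{m,k},\QQ)$ and verify that both sides have the same trace function. By Theorem 4.3, $\Phi(\PPP^{[\ast r]}) = \Phi(\PPP)^{\Nm,r}$, so the task reduces to computing this norm power and matching it against
$$
\Phi\Bigl(\sum_{i=0}^{r-1} q^{i\deg}\otimes\PPP\Bigr)(k_m,t) = \sum_{i=0}^{r-1} q^{im}\,\Phi(\PPP)(k_m,t).
$$

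First I would compute $\Phi(\PPP)(k_m,t) = -\Phi(\pi_{d\star}\LL_\chi)(k_m,t)$ using Proposition 2.5. The $k$-scheme $k_m$-points of $\GG_{m,k_d}$ lying over a given $t\in k_m^\star$ correspond to $k$-algebra embeddings $\iota:k_d\hookrightarrow k_m$; such embeddings exist iff $d\mid m$, and in that case they are parametrized by $\Gal(k_d/k)$. Plugging into the Kummer sheaf formula at each such $(\iota,t)$ and using one fixed embedding $\iota_0$ to identify $\iota_0(k_d)$ with $k_d$, I obtain
$$
\Phi(\PPP)(k_m,t) = \begin{cases} -\sum_{\sigma\in\Gal(k_d/k)}\chi^\sigma\bigl(\Nm_{k_m/k_d}(t)\bigr) & \text{if } d\mid m, \\ 0 & \text{otherwise.}\end{cases}
$$

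Two cases then arise when evaluating the norm power. If $d\mid m$, the tower formula $\Nm_{k_{mr}/k_d} = \Nm_{k_m/k_d}\circ\Nm_{k_{mr}/k_m}$ shows that $\Phi(\PPP)(k_{mr},u)$ is constant equal to $\Phi(\PPP)(k_m,t)$ on the fiber $\Nm_{k_{mr}/k_m}^{-1}(t)$, and multiplying by the fiber size $(q^{mr}-1)/(q^m-1) = \sum_{i=0}^{r-1}q^{im}$ yields exactly the right-hand side. If $d\nmid m$ then the right-hand side is zero, and I must show the left-hand side also vanishes. The case $d\nmid mr$ is automatic; otherwise, setting $\ell=\mathrm{lcm}(m,d)$ and factoring the norm through $k_\ell$ reduces the claim to showing that for each $\sigma$ the character sum $\sum_{w\in k_\ell^\star,\ \Nm_{k_\ell/k_m}(w)=t}\chi^\sigma(\Nm_{k_\ell/k_d}(w))$ vanishes.

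The main obstacle — and the only place where the minimality of $d$ is used — is this last vanishing. Since the fiber is a coset of $\ker(\Nm_{k_\ell/k_m})$, it suffices to show that $\chi^\sigma\circ\Nm_{k_\ell/k_d}$ is non-trivial on this kernel. By Hilbert 90, a character of $k_\ell^\star$ is trivial on $\ker(\Nm_{k_\ell/k_m})$ if and only if its order divides $q^m-1$; the order of $\chi^\sigma\circ\Nm_{k_\ell/k_d}$ equals $n$ by surjectivity of the norm. Now the identity $\gcd(q^m-1,q^d-1)=q^{\gcd(m,d)}-1$ combined with the minimality of $d$ forces $n\mid q^m-1$ to imply $d\mid\gcd(m,d)$, i.e.\ $d\mid m$, contradicting the standing assumption. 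Hence the character sum vanishes, and injectivity of $\Phi$ completes the proof.
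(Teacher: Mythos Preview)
Your proof is correct and follows essentially the same approach as the paper: both verify the identity of trace functions and invoke the injectivity of $\Phi$, splitting into the case where the trace function of $\PPP$ is already nonzero over $k_m$ (your $d\mid m$, the paper's $n\mid q^m-1$; these are equivalent since $d$ is the multiplicative order of $q$ modulo $n$) and the case where one must show the norm sum vanishes. The only cosmetic difference is that in the vanishing case you first factor through $k_\ell$ with $\ell=\mathrm{lcm}(m,d)$ before running the ``character nontrivial on the norm kernel'' argument, whereas the paper works directly in $k_{mr}$ and exhibits an explicit element $u_0$ of norm $1$ on which the character is nontrivial; the underlying group-theoretic content is identical.
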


\begin{proof}
 We will show that both sides have the same trace function, that is, that for every $m\geq 1$ and every $t\in k_m^\star$ we have
$$
f^{\Nm,r}_\PPP(k_m,t)=\sum_{i=0}^{r-1}q^{mi} f_\PPP(k_m,t).
$$

By definition of $\PPP$, we have
$$
f_\PPP(k_m,t)=\left\{\begin{array}{ll}
                     0 & \mbox{if }n\not |q^m-1 \\
                  -\sum_{i=0}^{d-1}\chi^{q^i}(\Nm_{k_m/k_d}(t)) & \mbox{if }n|q^m-1
                    \end{array}
\right.
$$
and
$$
f^{\Nm,r}_\PPP(k_m,t)=\left\{\begin{array}{ll}
                     0 & \mbox{if }n\not |q^{mr}-1 \\
                  -\sum_{\Nm_{k_{mr}/k_m}(u)=t}\sum_{i=0}^{d-1}\chi^{q^i}(\Nm_{k_{mr}/k_d}(u)) & \mbox{if }n|q^{mr}-1
                    \end{array}
\right.
$$

If $n\not |q^{mr}-1$, the equality is obvious. If $n|q^m-1$, the left hand side is
$$
\sum_{\Nm_{k_{mr}/k_m}(u)=t}-\sum_{i=0}^{d-1}\chi^{q^i}(\Nm_{k_{mr}/k_d}(u))=\sum_{\Nm_{k_{mr}/k_m}(u)=t}-\sum_{i=0}^{d-1}\chi^{q^i}(\Nm_{k_m/k_d}(t))=
$$
$$
=\frac{q^{mr}-1}{q^m-1}\cdot f_\PPP(k_m,t)=\left(\sum_{i=0}^{r-1}\kappa_{q^i}(k_m,t)\right)\cdot f_\PPP(k_m,t)
$$
so the equality holds. It remains to prove that $f^{r,\times}_\PPP(k_m,t)=0$ in the case where $n|q^{mr}-1$ but $n\not|q^m-1$.

In that case, we claim that there is an element $u_0\in k_{mr}$ such that $\Nm_{k_{mr}/k_m}(u_0)=1$ but $u_0^{\frac{q^{mr}-1}{n}}\neq 1$. Otherwise, the polynomial $x^{\frac{q^{mr}-1}{q^m-1}}-1$ would divide $x^{\frac{q^{mr}-1}{n}}-1$, so $\frac{q^{mr}-1}{q^m-1}$ would divide $\frac{q^{mr}-1}{n}$, which is impossible since $n$ does not divide $q^m-1$. Then $\Nm_{k_{mr}/k_m}(u)=\Nm_{k_{mr}/k_m}(uu_0)$, so for every $i=0,\ldots,d-1$
$$
\sum_{\Nm_{k_{mr}/k_m}(u)=t}\chi^{q^i}(\Nm_{k_{mr}/k_d}(u)) =\sum_{\Nm_{k_{mr}/k_m}(u)=t}\chi^{q^i}(\Nm_{k_{mr}/k_d}(uu_0))=
$$
$$
=\chi^{q^i}(\Nm_{k_{mr}/k_d}(u_0))\sum_{\Nm_{k_{mr}/k_m}(u)=t}\chi^{q^i}(\Nm_{k_{mr}/k_d}(u)).
$$
Now since the character $\chi^{q^i}\circ\Nm_{k_{mr}/k_d}$ of $k_{mr}^\star$ has order $n$ and $u_0^{\frac{q^{mr}-1}{n}}\neq 1$ it follows that $\chi^{q^i}(\Nm_{k_{mr}/k_d}(u_0))\neq 1$, and $\sum_{\Nm_{k_{mr}/k_m}(u)=t}\chi^{q^i}(\Nm_{k_{mr}/k_d}(u))$ must then be zero. So $f^{\Nm,r}_\PPP(k_m,t)=0$.
\end{proof}

In order to compute explicitely the $r$-th norm $L$-function of a given perverse sheaf, we first split the negligible components from the non-negligible ones. For the negligible components proposition \ref{negligible} gives us an exact formula, so let us focus on the non-negligible objects.

\begin{prop}\label{Sr}
 Let $\FF\in\Shg$ be a geometrically semisimple middle extension sheaf without negligible components which is tamely ramified at both $0$ and $\infty$, let $S\subseteq\bar k^\star$ be the set of finite ramification points of $\FF$ and $S^r:=S\cdots S$ ($r$ factors). Then for every $r\geq 1$, $\bar k-S^r\subseteq U_{\FF[1],r}(\bar k)$.
\end{prop}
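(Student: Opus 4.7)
The plan is to prove the stronger vanishing $\HHH^0((\FF[1])^{\ast r})_t = 0$ for every $t \in \GG_{m,\bar k} - S^r$. Since $\QQ[{\mathfrak S}_r]$ is semisimple, each $\mathcal Q_i := \R(\ast\wedge^i\rho)(\FF[1])$ appears as a direct summand of $(\FF[1])^{\ast r}$ in $\Dbc(\GG_{m,k},\QQ)$, and the vanishing of $\HHH^0$ at a point passes to direct summands, so this suffices. I would argue by induction on $r$, carrying along the stronger hypothesis that $L := (\FF[1])^{\ast(r-1)}$ is perverse with $\HHH^0(L)$ punctually supported on $S^{r-1}$ and $\HHH^{-1}(L)$ a geometrically semisimple middle extension sheaf smooth on $\GG_{m,\bar k} - S^{r-1}$, tame at $0$ and $\infty$, and without Kummer sub-quotients. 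The stability of this class of perverse sheaves under convolution --- in particular the preservation of tameness at $0, \infty$ and of the absence of Kummer sub-quotients --- is the main technical input, and rests on \cite[\S 2.6]{katz1996rls} together with the Tannakian formalism for non-negligible tame perverse sheaves on $\GG_{m,k}$ developed in \cite{katz2010mellin}.

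For the inductive step, write $(\FF[1])^{\ast r} = \FF[1] \ast L$. Proper base change along multiplication $\mu\colon \GG_m^2 \to \GG_m$, combined with the identification $\mu^{-1}(t) \cong \GG_{m,\bar k}$ via $x \mapsto (x, t/x)$, gives
\[
((\FF[1]) \ast L)_t \cong \R\Gamma_c(\GG_{m,\bar k}, \FF[1] \otimes m_t^* L), \qquad m_t(x) := t/x.
\]
The assumption $t \notin S \cdot S^{r-1}$ forces $S$ and $t/S^{r-1}$ to be disjoint subsets of $\GG_{m,\bar k}$, while $t/S^{r-1}$ contains both the singular locus of $m_t^* \HHH^{-1}(L)$ and the support of $m_t^* \HHH^0(L)$. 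A short hypercohomology spectral sequence argument, exploiting that $\HH^1_c$ of a zero-dimensional support vanishes (which kills the $m_t^* \HHH^0(L)$ contribution), reduces the claim $\HHH^0((\FF[1]) \ast L)_t = 0$ to the vanishing
\[
\HH^2_c(\GG_{m,\bar k}, \FF \otimes m_t^* \HHH^{-1}(L)) = 0.
\]

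Setting $U := \GG_{m,\bar k} - (S \cup t/S^{r-1})$, both tensor factors are smooth on $U$, so the target $\HH^2_c$ equals the $(-1)$-twisted coinvariants $(\FF \otimes m_t^* \HHH^{-1}(L))_{\pi_1^{\mathrm{geom}}(U)}(-1)$. By geometric semisimplicity, a non-zero coinvariant produces a common irreducible $\pi_1^{\mathrm{geom}}(U)$-subrepresentation of $\FF|_U$ and of $(m_t^* \HHH^{-1}(L))^\vee|_U$. Such a subrepresentation would have trivial local monodromy at the points of $S$ (inherited from $(m_t^* \HHH^{-1}(L))^\vee$, which is smooth there since $S \cap t/S^{r-1} = \emptyset$) and at the points of $t/S^{r-1}$ (inherited from $\FF$, which is smooth there), so it would extend to a smooth sheaf on $\GG_{m,\bar k}$ tame at $0$ and $\infty$ --- necessarily a Kummer sheaf. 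This contradicts the non-negligibility of $\FF$, in exactly the same spirit as the argument behind Proposition \ref{trex1}. The hard part of the whole plan is not this last contradiction, which is quick, but rather the propagation step: ensuring that the category in which the induction takes place is actually closed under convolution.
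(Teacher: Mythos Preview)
Your overall strategy matches the paper's: reduce to showing that $\HHH^0\bigl((\FF[1])^{\ast r}\bigr)$ vanishes on $\GG_{m,\bar k}-S^r$, then pass to the direct summands $\mathcal Q_i$. The paper dispatches that vanishing in one line by invoking \cite[Lemma 19.5]{katz2010mellin} (together with the later remark that this lemma, stated for middle convolution, transfers to $!$-convolution because the difference is a successive extension of Kummer objects and hence smooth on $\GG_{m,\bar k}$).

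Your inductive unpacking is essentially a sketch of that lemma, and two points deserve comment. First, the inductive hypothesis is over-specified: geometric semisimplicity, the middle-extension property, and ``no Kummer sub-quotients'' for $\HHH^{-1}(L)$ do \emph{not} propagate under $!$-convolution (the $!$-convolution picks up Kummer pieces and need not remain pure), and they are not actually used in your contradiction---only the semisimplicity, tameness at $0,\infty$, and non-negligibility of $\FF$ itself enter. Second, the one condition on $L$ that \emph{is} essential for your stalk computation, namely that $\HHH^{-1}(L)$ be smooth on $\GG_{m,\bar k}-S^{r-1}$, is exactly what your displayed argument does not establish: you prove vanishing of $\HHH^0$ at a point $t$, not smoothness of $\HHH^{-1}$ there, so the induction is not closed by the argument given. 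You defer this closure to Katz, but that smoothness statement is precisely the content of \cite[Lemma 19.5]{katz2010mellin}, so the elaboration ultimately collapses back to the paper's direct citation.
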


\begin{proof}
 By \cite[Lemma 19.5]{katz2010mellin}, $\HHH^0(\FF[1]^{\ast r})$ vanishes on $\GG_{m,k}-S^r$, so the same is true for $\HHH^0(\R(\ast\wedge^i\rho)\FF[1])={\mathcal Hom}_{{\mathfrak S}_r}(\wedge^i\rho,\HHH^0(\FF[1]^{\ast r}))$ for every $i=0,\ldots,r-1$.
\end{proof}

\begin{prop}\label{tame}
 Let $\FF,\GGG\in\Shg$ be geometrically semisimple middle extension sheaves without negligible components which are everywhere tamely ramified. Then $\HHH^{-1}(\FF[1]\ast\GGG[1])$ is everywhere tamely ramified.
\end{prop}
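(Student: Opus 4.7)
The strategy is to realize the convolution as a proper direct image of an everywhere tame sheaf, and then to invoke the preservation of tameness under proper pushforward. To begin, compactify the multiplication $\mu\colon\GG_m\times\GG_m\to\GG_m$ to a proper morphism $\bar\mu\colon\bar X\to\PP^1$, where $\bar X$ is $\PP^1\times\PP^1$ blown up at the two points $(0,\infty)$ and $(\infty,0)$ at which the rational map $(x,y)\mapsto xy$ is indeterminate. Let $\bar j\colon X:=\GG_m\times\GG_m\hookrightarrow\bar X$ denote the open immersion. Since $\bar\mu$ is proper, $\R\bar\mu_{\,!}=\R\bar\mu_{\,*}$, so on $\GG_m$ we have
\[
\FF[1]\ast\GGG[1]\;\cong\;\R\bar\mu_{\,*}(\bar j_{\,!}(\FF\boxtimes\GGG))[2],
\]
and in particular $\HHH^{-1}(\FF[1]\ast\GGG[1])$ is the restriction to $\GG_m$ of $\R^1\bar\mu_{\,*}(\bar j_{\,!}(\FF\boxtimes\GGG))$.

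Next, I would verify that $\bar j_{\,!}(\FF\boxtimes\GGG)$ has only tame ramification along every irreducible divisor of $\bar X$. Inside $X$ the ramification of $\FF\boxtimes\GGG$ is supported on the horizontal divisors $\{x=s\}$ for $s\in S_\FF$ and the vertical divisors $\{y=s'\}$ for $s'\in S_\GGG$, and along each such divisor the normal inertia acts through the local monodromy of $\FF$ at $s$ (respectively of $\GGG$ at $s'$), which is tame by hypothesis. Along the four boundary divisors $\{x=0\},\{x=\infty\},\{y=0\},\{y=\infty\}$ of $\PP^1\times\PP^1$ the normal inertia factors through the tame local monodromies of $\FF$ or $\GGG$ at $0$ and $\infty$. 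Along the two exceptional divisors a local-coordinate computation (for instance, working in charts $(x,u)$ and $(v,y')$ with $y'=xu$ and $x=vy'$ near $(0,\infty)$, where $y'=1/y$, so that $\FF\boxtimes\GGG$ becomes $\FF(x)\otimes\GGG(1/(xu))$ in the first chart) shows that the normal inertia at the generic point of each exceptional divisor is the pullback via a finite tame map of the tame local monodromies of $\FF$ and $\GGG$ at $0$ and $\infty$, hence again tame.

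Third, I would appeal to the standard principle that proper pushforward preserves tame ramification: if $f\colon\tilde X\to Y$ is a proper morphism between smooth varieties over $k$ and $\mathcal F$ is a constructible $\QQ$-sheaf on $\tilde X$ tame along every boundary divisor of $\tilde X$, then $\R f_{\,*}\mathcal F$ is tamely ramified on $Y$. Applied to $\bar j_{\,!}(\FF\boxtimes\GGG)$ and $\bar\mu$, this yields that $\R\bar\mu_{\,*}(\bar j_{\,!}(\FF\boxtimes\GGG))$ is tame at every point of $\PP^1$, and restricting to $\GG_m$ gives that $\HHH^{-1}(\FF[1]\ast\GGG[1])$ is everywhere tamely ramified.

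The main obstacle I anticipate is the analysis along the two exceptional divisors of the blow-up, where $x$-ramification and $y$-ramification meet simultaneously: one must verify in local coordinates that the modification does not introduce any wild ramification. A related technical subtlety is that the tameness-preservation statement has to be applied to the extension-by-zero $\bar j_{\,!}(\FF\boxtimes\GGG)$ rather than to a middle extension, so the cleanest formulation is in terms of the nearby cycles of $\bar j_{\,!}(\FF\boxtimes\GGG)$ along the fibers of $\bar\mu$, whose wild parts must be shown to vanish; this ultimately reduces to the tameness of $\FF$ and $\GGG$ at their finite ramification points together with $0$ and $\infty$.
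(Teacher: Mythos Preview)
Your approach is sound and your identification of the delicate point (tameness along the exceptional divisors, and the need to control the wild part of the nearby cycles of $\bar j_{!}(\FF\boxtimes\GGG)$) is accurate. However, the paper proves the proposition by a completely different, purely numerical argument that avoids any geometry of the compactified multiplication map.

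The paper's proof computes $\chi(\FF[1]\ast\GGG[1])$ in two ways. On the one hand it equals $\chi(\FF[1])\cdot\chi(\GGG[1])$, which by the tame Euler--Poincar\'e formula is $\left(\sum_{s\in S}(m-m_s)\right)\left(\sum_{t\in T}(n-n_t)\right)$. On the other hand, one expands $\chi(\FF[1]\ast\GGG[1])$ via the Euler--Poincar\'e formula on $\GG_{m}$: the generic rank of $\HHH^{-1}(\FF[1]\ast\GGG[1])$ and the pointwise Euler characteristics at each $u\in ST$ are computed directly as $-\chi(\GG_{m,\bar k},\FF\otimes\phi_u^{\star}\GGG)$ (again using only the tame Euler--Poincar\'e formula for $\FF\otimes\phi_u^{\star}\GGG$, since at every point at most one factor is ramified). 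Comparing the two expressions, the combinatorics collapses exactly to the product above, forcing $\sum_{u}\Swan_u\HHH^{-1}(\FF[1]\ast\GGG[1])=0$.

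Your geometric argument is more conceptual and in principle more robust (it would adapt to other tame convolution situations, and it does not really use the hypotheses ``geometrically semisimple'' or ``without negligible components'' beyond making the convolution perverse). The cost is that the key step ``proper pushforward preserves tameness'' for a surface over a curve with extension by zero along a simple normal crossings boundary is not free: it ultimately reduces to a statement about tame nearby cycles that must be cited or proved. The paper's route trades this for an elementary but slightly miraculous Euler-characteristic bookkeeping, self-contained from the Grothendieck--Ogg--Shafarevich formula.
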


\begin{proof}
 Let $S\subseteq\bar k^\star$ (respectively $T\subseteq \bar k^\star$) be the set of ramification points of $\FF$ (resp. $\GGG$). Let $m$ (resp. $n$) be the generic rank of $\FF$ (resp. $\GGG$), and for every $s\in S$ (resp. $t\in T$) let $m_s$ (resp. $n_t$) be the rank of $\FF$ at $s$ (resp. the rank of $\GGG$ at $t$). By the Euler-Poincar\'e formula,
$$
\chi(\FF[1])=\sum_{s\in S}(m-m_s),
$$
$$
\chi(\GGG[1])=\sum_{t\in T}(n-n_t)
$$
and
$$
\chi(\FF[1]\ast\GGG[1])=\chi(\FF[1])\cdot\chi(\GGG[1])=\left(\sum_{s\in S}(m-m_s)\right)\left(\sum_{t\in T}(n-n_t)\right).
$$

By \cite[Lemma 19.5]{katz2010mellin}, $\FF[1]\ast\GGG[1]$ is smooth on $\bar k^\star-ST$. If $u\in \bar k^\star-ST$, the rank of $\HHH^{-1}(\FF[1]\ast\GGG[1])$ at $u$ is $-\chi(\GG_{m,\bar k},\FF\otimes\phi_u^\star\GGG)$, where $\phi_u:\GG_{m,\bar k}\to\GG_{m,\bar k}$ is the automorphism defined by $t\mapsto u/t$. Since, at every point of $\GG_{m,\bar k}$, at least one of $\FF$, $\phi_u^\star\GGG$ is smooth, and each local term in the Euler-Poincar\'e formula \cite[Expos\'e X, Corollaire 7.12]{grothendieck1977cohomologie} gets multiplied by $d$ upon tensoring with a smooth sheaf of rank $d$, we conclude that 
$$
-\chi(\GG_{m,\bar k},\FF\otimes\phi_u^\star\GGG)=-m\cdot\chi(\GG_{m,\bar k},\GGG)-n\cdot\chi(\GG_{m,\bar k},\FF)=
$$
\begin{equation}\label{Euler}
=m\sum_{t\in T}(n-n_t)+n\sum_{s\in S}(m-m_s).
\end{equation}
This is the generic rank of $\HHH^{-1}(\FF[1]\ast\GGG[1])$. Now let $u\in ST$, and let $R_u$ be the set of pairs $(s,t)\in S\times T$ such that $u=st$. Then $\dim\HHH^0(\FF[1]\ast\GGG[1])_u-\dim\HHH^{-1}(\FF[1]\ast\GGG[1])_u=\chi(\GG_{m,\bar k},\FF\otimes\phi_u^\star\GGG)$. Again by the Euler-Poincar\'e formula, we have
$$
\chi(\GG_{m,\bar k},\FF\otimes\phi_u^\star\GGG)=-\sum_{(s,t)\in R_u}(mn-m_sn_t)-\sum_{s\in S'_u}(mn-m_sn)-\sum_{t\in T'_u}(mn-mn_t)
$$
where $S'_u$ (resp $T'_u$) is the set of $s\in S$ such that $u/s\notin T$ (resp. the set of $t\in T$ such that $u/t\notin S$).

The Euler characteristic of $\FF[1]\ast\GGG[1]$ is then
$$
\chi(\GG_{m,\bar k},\HHH^0(\FF[1]\ast\GGG[1]))-\chi(\GG_{m,\bar k},\HHH^{-1}(\FF[1]\ast\GGG[1]))=
$$
$$
=\sum_{u\in ST}(\dim\HHH^0(\FF[1]\ast\GGG[1])_u-\dim\HHH^{-1}(\FF[1]\ast\GGG[1])_u)+
$$
$$
+\#ST\cdot\mathrm{gen.rank}(\HHH^{-1}(\FF[1]\ast\GGG[1]))+\sum_{u\in ST\cup\{0,\infty\}}\Swan_u\HHH^{-1}(\FF[1]\ast\GGG[1])=
$$
$$
=\sum_{u\in ST}\left(m\sum_{t\in T}(n-n_t)+n\sum_{s\in S}(m-m_s)-\sum_{(s,t)\in R_u}(mn-m_sn_t)\right.
$$
$$
-\left.\sum_{s\in S'_u}(mn-m_sn)-\sum_{t\in T'_u}(mn-mn_t)\right)+\sum_{u\in ST\cup\{0,\infty\}}\Swan_u\HHH^{-1}(\FF[1]\ast\GGG[1])=
$$
$$
=\sum_{u\in ST}\sum_{(s,t)\in R_u}(m-m_s)(n-n_t)+\sum_{u\in ST\cup\{0,\infty\}}\Swan_u\HHH^{-1}(\FF[1]\ast\GGG[1])=
$$
$$
=\left(\sum_{s\in S}(m-m_s)\right)\left(\sum_{t\in T}(n-n_t)\right)+\sum_{u\in ST\cup\{0,\infty\}}\Swan_u\HHH^{-1}(\FF[1]\ast\GGG[1]).
$$

Comparing with (\ref{Euler}), we conclude that $\sum_{u\in ST\cup\{0,\infty\}}\Swan_u\HHH^{-1}(\FF[1]\ast\GGG[1])=0$, that is, $\HHH^{-1}(\FF[1]\ast\GGG[1])$ is everywhere tamely ramified.
\end{proof}

\begin{cor}
 Let $\FF\in\Shg$ be an everywhere tamely ramified geometrically semisimple middle extension sheaf without negligible components. Then $\HHH^{-1}(\R(\ast\rho)\FF[1])$ is everywhere tamely ramified for every $r\geq 1$ and every representation $\rho$ of ${\mathfrak S}_r$.
\end{cor}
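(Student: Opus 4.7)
\emph{Proof plan.} Since $(-)^\rho={\mathcal Hom}_{{\mathfrak S}_r}(\rho,-)$ is exact in characteristic zero, it commutes with formation of ordinary cohomology sheaves, so
$$\HHH^{-1}(\R(\ast\rho)\FF[1])=\HHH^{-1}((\FF[1]^{\ast r})^\rho)=\HHH^{-1}(\FF[1]^{\ast r})^\rho$$
is a direct summand of $\HHH^{-1}(\FF[1]^{\ast r})$. Direct summands of tamely ramified sheaves are tamely ramified (wild inertia acts trivially on the ambient sheaf, hence on any summand), so it suffices to prove the statement for $\rho$ trivial, i.e.\ to show that $\HHH^{-1}(\FF[1]^{\ast r})$ is everywhere tamely ramified.

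We proceed by induction on $r$; the case $r=1$ is the hypothesis on $\FF$. For the inductive step, set $\GGG_{r-1}:=\HHH^{-1}(\FF[1]^{\ast(r-1)})$ and $N_{r-1}:=\HHH^0(\FF[1]^{\ast(r-1)})$, and convolve the distinguished triangle $\GGG_{r-1}[1]\to \FF[1]^{\ast(r-1)}\to N_{r-1}\to$ with $\FF[1]$. Since $N_{r-1}$ is punctual, $\FF[1]\ast N_{r-1}$ is a finite direct sum of translates of $\FF[1]$; in particular its ordinary $\HHH^{-1}$ is a sum of translates of $\FF$, while $\HHH^{-2}$ and $\HHH^0$ vanish. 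The long exact sequence of ordinary cohomology sheaves then yields a short exact sequence
$$0\to\HHH^{-1}(\FF[1]\ast \GGG_{r-1}[1])\to \HHH^{-1}(\FF[1]^{\ast r})\to K\to 0,$$
with $K$ a subsheaf of the tame sheaf $\HHH^{-1}(\FF[1]\ast N_{r-1})$. By induction and Proposition \ref{tame} applied to $\FF$ and $\GGG_{r-1}$, the leftmost term is also everywhere tame. Since an extension of tamely ramified $\QQ$-sheaves is tame --- at each puncture the wild inertia $P_x$ is a pro-$p$ group and its continuous cohomology on $\QQ$-vector spaces vanishes in positive degree because $\ell\neq p$ --- we conclude that $\HHH^{-1}(\FF[1]^{\ast r})$ is tame, completing the induction.

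The main obstacle is sustaining the full hypotheses of Proposition \ref{tame} throughout the induction: one needs $\GGG_{r-1}$ to be not only tame but also a geometrically semisimple middle extension without negligible components. Being a middle extension is automatic from the structure theory of perverse sheaves on the smooth curve $\GG_m$. Geometric semisimplicity and absence of negligible components have to be transported along each step of the induction, and this is where the bookkeeping is delicate: one appeals to Katz's results on the Tannakian category of perverse sheaves on $\GG_m$ modulo negligibles under middle convolution \cite{katz2010mellin}, which ensure that middle convolution (and hence, after accounting for punctual summands, the perverse cohomology sheaves of iterated $!$-convolutions) preserves these properties. Once this is in hand, the inductive step goes through as described.
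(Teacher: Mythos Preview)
Your reduction to $\HHH^{-1}(\FF[1]^{\ast r})$ via the direct-summand argument is correct, as is the inductive skeleton that splits off the punctual $\HHH^0$ at each step (convolution with a punctual object gives translates of $\FF$, hence tame). This matches the paper's intent; the corollary is stated without proof, as an immediate consequence of Proposition~\ref{tame}.

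The obstacle you flag is genuine, however, and your patch does not close it. First, the assertion that ``being a middle extension is automatic from the structure theory of perverse sheaves on the smooth curve $\GG_m$'' is false: $\HHH^{-1}$ of a perverse sheaf on a curve has no punctual sections, but need not be a middle extension. Already $\PPP=j_!\QQ[1]$ for a proper open immersion $j$ gives $\HHH^{-1}(\PPP)=j_!\QQ$, whose stalk at a missing point is $0$ rather than the inertia invariants $\QQ$. Second, the $!$-convolution $\FF[1]^{\ast(r-1)}$ is typically not pure, so $\GGG_{r-1}$ need not be semisimple and may well contain Kummer subquotients; the vague appeal to \cite{katz2010mellin} does not supply these properties for $!$-convolution.

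A clean way to fill the gap is to run the induction through \emph{middle} convolution, using the Remark that follows the corollary. Reducing to $\FF$ pure (by the standing semisimplicity hypothesis), each $\FF[1]^{\ast_{mid}(r-1)}$ is pure, hence geometrically semisimple, and its $\HHH^{-1}$ is a direct sum of irreducible middle extensions. After stripping off the (tame, negligible) Kummer summands, one obtains a sheaf $\GGG'_{r-1}$ to which Proposition~\ref{tame} applies directly. The Remark (via \cite{gabber1996faisceaux}) says that at each step the kernel of the surjection $\ast_!\twoheadrightarrow\ast_{mid}$ is a successive extension of Kummer objects $\LL_\chi[1]$, all tame; bookkeeping these tame pieces through the induction gives tameness of $\HHH^{-1}(\FF[1]^{\ast_! r})$.
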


\begin{rem}{\rm
 Lemma \cite[Lemma 19.5]{katz2010mellin} is proved for the ``middle convolution'' (i.e. the image of the ``forget supports'' map $\FF[1]\ast_!\GGG[1]\to \FF[1]\ast_\star \GGG[1]$), while we are using ``regular'' $!$-convolution here. However, by \cite[Proposition 3.6.4]{gabber1996faisceaux}, the mapping cone of the forget supports map (and therefore the kernel of the surjective map $\FF[1]\ast_!\GGG[1]\to\FF[1]\ast_{mid}\GGG[1]$) is a succesive extension of Kummer objects $\LL_\chi[1]$ and, in particular, is smooth on $\GG_{m,\bar k}$, so the ramification points (and the non-trivial part of the inertia action at those points) of $\FF[1]\ast_!\GGG[1]$ and $\FF[1]\ast_{mid}\GGG[1]$ are the same.
}
\end{rem}

We will now try to find a good estimate for the constant $C_{\FF[1],r}$ for an everywhere tamely ramified middle extension sheaf $\FF$. 

\begin{lem}
 Let $\FF\in\Shg$ be an everywhere tamely ramified middle extension sheaf without negligible components, pure of weight $0$. For every (possibly trivial) character $\chi$ of $\bar k^\star$ and every $j\geq 1$, let $n_{\chi,j}$ be the number of Jordan blocks of size $j$ with eigenvalue $\chi$ in the local monodromies of $\FF$ at $0$ and $\infty$, $n:=\dim\HH^1_c(\GG_{m,\bar k},\FF)$ and $n_{\chi,0}:=n-\sum_{j\geq 1}n_{\chi,j}$. Let $J_\chi=\{j\geq 0|n_{\chi,j}>0\}$. Then the generic rank of $\HHH^{-1}(\Sym^{\ast r}\FF[1])$ is bounded by
\begin{equation}\label{A}
A_{\FF,r}:=\sum_\chi\sum_{(i_j)\in{\mathbb Z}_{\geq 0}^{J_\chi},\sum i_j=r}\prod_{j\in J_\chi}{{n_{\chi,j}+i_j-1}\choose{i_j}}\sum_{j\in J_\chi}ji_j.
\end{equation}
\end{lem}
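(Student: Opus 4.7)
By the corollary to Proposition \ref{tame}, $\GGG:=\HHH^{-1}(\Sym^{\ast r}\FF[1])$ is a tamely ramified middle extension on $\GG_{m,\bar k}$. Hence its generic rank equals the dimension of the local monodromy representation of $I_0$ on its generic stalk. Since $I_0$ acts through its pro-cyclic tame quotient, this representation decomposes into generalized eigenspaces indexed by tame characters $\chi$, and the plan is to bound each such generalized eigenspace separately in terms of the Jordan-block data $(n_{\chi,j})$ of the local monodromies of $\FF$ at $0$ and $\infty$, and then sum over $\chi$.

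The first step I would carry out is to show that every character $\chi$ occurring as an eigenvalue of $I_0$ on the generic stalk of $\GGG$ arises as a product $\chi_1\cdots\chi_r$ of tame characters, each $\chi_i$ appearing in the local monodromy of $\FF$ at $0$ or at $\infty$. This follows from the iterated tame convolution formulas (in the same spirit as Proposition \ref{tame} and its corollary), since under convolution the eigenvalue characters at $0$ and $\infty$ combine multiplicatively into eigenvalue characters at $0$ of the convolution factor.

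The second step is a Pieri-type formula for how Jordan blocks themselves combine under tame convolution: choosing, for each of the $r$ convolution factors, one Jordan block from the local monodromies of $\FF$ at $0$ or $\infty$, of size $j_i$ and eigenvalue $\chi_i$, should produce a subspace of total dimension $\sum_i j_i$ inside the $\chi_1\cdots\chi_r$-generalized eigenspace of the local monodromy of $\FF[1]^{\ast r}$ at $0$. Passing to the $\mathfrak S_r$-invariants then forces us to count unordered multisets of Jordan blocks: for a fixed target character $\chi$ and a type $(i_j)_{j\in J_\chi}$ with $\sum i_j=r$, the number of multisets of $r$ Jordan blocks with $i_j$ blocks of size $j$ (eigenvalues multiplying to $\chi$) selected from the $n_{\chi,j}$ available ones is $\prod_j\binom{n_{\chi,j}+i_j-1}{i_j}$, each contributing dimension $\sum_j ji_j$ to the $\chi$-generalized eigenspace of $\GGG$. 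Summing over all characters $\chi$ and all types $(i_j)$ then yields the bound $A_{\FF,r}$.

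The main obstacle is establishing the Pieri-type rule for the fusion of Jordan blocks under tame convolution. Multiplication of eigenvalue characters is elementary, but tracking the nilpotent structure on generalized eigenspaces through an iterated Mellin convolution requires a careful refinement of the tannakian framework of \cite{katz2010mellin}, beyond the mere preservation of tameness used in Proposition \ref{tame}. Once this fusion rule is in hand, the symmetric-power count and the identification with the combinatorial expression $A_{\FF,r}$ reduce to standard multilinear algebra.
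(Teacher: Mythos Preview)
Your plan takes a genuinely different route from the paper, and the step you flag as the main obstacle --- a Pieri-type fusion rule for Jordan blocks under iterated tame convolution --- is precisely what the paper \emph{avoids}. The paper never computes the local monodromy of $\GGG:=\HHH^{-1}(\Sym^{\ast r}\FF[1])$ directly. Instead it converts the problem into a \emph{weight} computation on global cohomology, using Proposition~\ref{mainprop} as the structural input.

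Here is the paper's argument. Since $\GGG$ is tame, its generic rank is half the total size of all Jordan blocks in its monodromies at $0$ and $\infty$, so it suffices to bound for each $\chi$ the sum of the sizes of the $\chi$-blocks at $0$ and $\infty$. Twisting $\FF$ (and hence $\GGG$, by the projection formula) by $\LL_{\bar\chi}$ reduces to $\chi=\mathbf 1$. Now the weight--monodromy dictionary (\cite[1.8]{deligne1980conjecture}, \cite[7.0.7]{katz1988gauss}) says that for a pure-weight-$0$ middle extension, each unipotent Jordan block of size $j$ at $0$ or $\infty$ contributes a Frobenius eigenvalue of weight $1-j$ to $\HH^1_c$; thus $\HH^1_c(\GG_{m,\bar k},\FF)$ has exactly $n_j:=n_{\chi,j}$ eigenvalues of weight $1-j$ for each $j\ge 0$. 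By Proposition~\ref{mainprop}, $\HH^0_c(\GG_{m,\bar k},\Sym^{\ast r}\FF[1])\cong\Sym^r\HH^1_c(\GG_{m,\bar k},\FF)$, whose weight decomposition is now explicit: the piece of weight $r-\sum_j j i_j$ has dimension $\prod_j\binom{n_j+i_j-1}{i_j}$. Running the dictionary backwards for $\GGG$ (mixed of weights $\le r-1$), each unipotent block of size $e$ at $0$ or $\infty$ forces an eigenvalue of weight $\le r-e$ in $\HH^1_c(\GG_{m,\bar k},\GGG)\hookrightarrow\Sym^r\HH^1_c(\GG_{m,\bar k},\FF)$; summing the weight drops yields the bound. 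So the quantity $\sum_j j i_j$ in $A_{\FF,r}$ is a \emph{weight drop}, not a ``total Jordan size'' coming from a local fusion rule.

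There is also a mismatch in your combinatorial reading of $A_{\FF,r}$. In the formula, the outer $\chi$ simultaneously indexes the eigenvalue in the monodromy of $\GGG$ \emph{and} the eigenvalue of the blocks of $\FF$ counted by $n_{\chi,j}$: after the Kummer twist both become trivial at once. Your fusion picture, in which blocks of $\FF$ with eigenvalues $\chi_1,\dots,\chi_r$ feed the $\chi_1\cdots\chi_r$-eigenspace of $\GGG$, does not produce this structure --- selecting $r$ blocks from the $n_{\chi,j}$ (all with eigenvalue $\chi$) would land in the $\chi^r$-eigenspace, not the $\chi$-eigenspace. So even if a fusion rule were established, the count you sketch would not recover $A_{\FF,r}$ as written.
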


\begin{proof}
Notice that the sum is actually finite, since there are only finitely many characters $\chi$ for which $J_\chi\neq\{0\}$. We will show that for any $\chi$ the sum of the sizes of the Jordan blocks in the monodromies of $\HHH^{-1}(\Sym^{\ast r}\FF[1])$ at $0$ and $\infty$ associated to the character $\chi$ is bounded above by
$$
2\cdot\sum_{(i_j)\in{\mathbb Z}_{\geq 0}^{J_\chi},\sum i_j=r}\prod_{j\in J_\chi}{{n_{\chi,j}+i_j-1}\choose{i_j}}\sum_{j\in J_\chi}ji_j.
$$

Since, by proposition \ref{tame}, $\HHH^{-1}(\Sym^{\ast r}\FF[1])$ is everywhere tamely ramified, its monodromy at $0$ and $\infty$ is a direct sum of such Jordan blocks. Therefore the sum of these quantities for all characters $\chi$ is twice the rank of $\HHH^{-1}(\Sym^{\ast r}\FF[1])$, which proves the lemma.

Fix one such $\chi$. By tensoring $\FF$ with the Kummer sheaf $\LL_{\bar\chi}$ (which does not change the hypotheses of the lemma), we can assume without loss of generality that $\chi={\mathbf 1}$ is the trivial character. Let $n_j=n_{{\mathbf 1},j}$ for $j\geq 0$. From the exact sequence of sheaves
$$
0\to j_!\FF\to j_\star\FF\to i_{0\star}\FF^{I_0}\oplus i_{\infty\star}\FF^{I_\infty}\to 0
$$
where $j:\GG_{m,k}\to\PP^1_k$, $i_0:\{0\}\to\PP^1_k$ and $i_\infty:\{\infty\}\to\PP^1_k$ are the inclusions, we get an exact sequence
$$
\HH^0(\PP^1_{\bar k},j_\star\FF)\to\FF^{I_0}\oplus\FF^{I_\infty}\to\HH^1_c(\GG_{m,\bar k},\FF)\to\HH^1(\PP^1_{\bar k},j_\star\FF)\to 0.
$$

Since $\FF$ is a middle extension pure of weight $0$, by \cite[Th\'eor\`eme 3.2.3]{deligne1980conjecture} the latter group is pure of weight $1$. On the other hand $\FF^{I_0}\oplus\FF^{I_\infty}$ is mixed of weights $\leq 0$, and in fact (cf. \cite[1.8]{deligne1980conjecture}, \cite[Theorem 7.0.7]{katz1988gauss}) the Frobenius action has $n_{j}$ eigenvalues of weight $1-j$ for every $j\geq 1$. Finally since $\FF$ has no negligible (and in particular constant) components, the first group vanishes. We conclude that $\HH^1_c(\GG_{m,\bar k},\FF)$ has $n_{j}$ Frobenius eigenvalues of weight $1-j$ for every $j\geq 0$.

Write (the semisimplification of) $\HH^1_c(\GG_{m,\bar k},\FF)$ as $\bigoplus_{j\in J_{\mathbf 1}} W_j$, where $W_j$ is pure of weight $1-j$ and $\dim W_j=n_{j}$. Then by proposition \ref{mainprop}
$$
\HH^0_c(\GG_{m,\bar k},\Sym^{\ast r}\FF[1])=\Sym^r\HH^1_c(\GG_{m,\bar k},\FF)=
$$
$$
=\bigoplus_{(i_j)\in{\mathbb Z}_{\geq 0}^{J_{\mathbf 1}},\sum i_j=r}(\bigotimes_{j\in J_{\mathbf 1}}\Sym^{i_j}W_j)
$$
where $\bigotimes_{j\in J_{\mathbf 1}}\Sym^{i_j}W_j$ is pure of weight $\sum_{j\in J_{\mathbf 1}}i_j(1-j)$ and dimension $\prod_{j\in J_{\mathbf 1}}{{n_j+i_j-1}\choose{i_j}}$.

Let $\GGG=\HHH^{-1}(\Sym^{\ast r}\FF[1])$ and consider the exact sequence
$$
0\to\HH^0(\PP^1_{\bar k},j_\star\GGG)\to\GGG^{I_0}\oplus\GGG^{I_\infty}\to\HH^1_c(\GG_{m,\bar k},\GGG)\to\HH^1(\PP^1_{\bar k},j_\star\GGG)\to 0.
$$
Since $\GGG$ is mixed of weights $\leq r-1$, every unipotent Jordan block of size $e$ in the monodromy of $\GGG$ at $0$ or $\infty$ contributes an eigenvalue of weight $\leq r-e$ to $\GGG^{I_0}\oplus\GGG^{I_\infty}$. This gives a corresponding eigenvalue of $\HH^1_c(\GG_{m,\bar k},\GGG)$ \emph{except} when it arises from something in $\HH^0(\PP^1_{\bar k},j_\star\GGG)$. But in that case, the eigenvalue appears in both monodromies at $0$ and $\infty$. So in the worst case, every two unipotent Jordan blocks of size $e$ give an eigenvalue of weight $\leq r-e$ in $\HH^1_c(\GG_{m,\bar k},\GGG)$. We conclude that the sum of the sizes of the unipotent Jordan blocks in the monodromies of $\GGG$ at $0$ and $\infty$ is bounded by $2\cdot\sum_\lambda(r-w(\lambda))$, where the sum is taken over all Frobenius eigenvalues of $\HH^1_c(\GG_{m,\bar k},\GGG)$ and $w(\lambda)$ is the weight of the eigenvalue $\lambda$.

If the trivial character does not appear in the local monodromies of $\FF$ at $0$ and $\infty$ then $\HH^1_c(\GG_{m,\bar k},\FF)$ is pure of weight $1$, and therefore $\HH^1_c(\GG_{m,\bar k},\GGG)\hookrightarrow\HH^0_c(\GG_{m,\bar k},\Sym^{\ast r}\FF[1])=\Sym^r\HH^1_c(\GG_{m,\bar k},\FF)$ is pure of weight $r-1$. So the trivial character does not appear in the local monodromies of $\GGG$ either. Otherwise, for every $(i_j)\in{\mathbb Z}_{\geq 0}^{J_{\mathbf 1}}$ such that $\sum_j i_j=r$ we get at most $\prod_{j\in J_{\mathbf 1}}{{n_j+i_j-1}\choose{i_j}}$ eigenvalues of weight $\sum_{j\in J_{\mathbf 1}}i_j(1-j)$ (since $\HH^1_c(\GG_{m,\bar k},\GGG)\hookrightarrow\HH^0_c(\GG_{m,\bar k},\Sym^{\ast r}\FF[1])=\Sym^r\HH^1_c(\GG_{m,\bar k},\FF)$). So 
$$
2\cdot\sum_\lambda(r-w(\lambda))\leq 2\cdot\sum_{(i_j)\in{\mathbb Z}_{\geq 0}^{J_{\mathbf 1}},\sum_j i_j=r}\prod_{j\in J_{\mathbf 1}}{{n_j+i_j-1}\choose{i_j}}(r-\sum_{j\in J_{\mathbf 1}}i_j(1-j))=
$$
$$
=2\cdot\sum_{(i_j)\in{\mathbb Z}_{\geq 0}^{J_{\mathbf 1}},\sum_j i_j=r}\prod_{j\in J_{\mathbf 1}}{{n_j+i_j-1}\choose{i_j}}\sum_{j\in J_{\mathbf 1}}ji_j.
$$
\end{proof}

In the same way one can prove

\begin{lem}
 With the notation and hypotheses of the previous lemma, the generic rank of $\HHH^{-1}(\wedge^{\ast r}\FF[1])$ is bounded by
\begin{equation}\label{B}
B_{\FF,r}:=\sum_\chi\sum_{(i_j)\in{\mathbb Z}_{\geq 0}^{J_\chi},\sum i_j=r}\prod_{j\in J_\chi}{{n_j}\choose{i_j}}\sum_{j\in J_\chi}ji_j.
\end{equation}
\end{lem}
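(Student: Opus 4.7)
The plan is to mimic verbatim the proof of the previous lemma, substituting $\wedge$ for $\Sym$ wherever it appears. The two ingredients we need, each of which has an exact analog in the $\Sym$ case, are: (i) a cohomological identification of $\HH^0_c(\GG_{m,\bar k},\wedge^{\ast r}\FF[1])$ with $\wedge^r\HH^1_c(\GG_{m,\bar k},\FF)$, together with a decomposition of the latter into pure pieces of controlled weights and dimensions; and (ii) the tameness and weight-counting bookkeeping that translates this into a bound on the sum of sizes of Jordan blocks in the local monodromies of $\GGG:=\HHH^{-1}(\wedge^{\ast r}\FF[1])$.

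More concretely, I would first invoke the corollary to Proposition \ref{tame} to obtain that $\GGG$ is everywhere tamely ramified, so that the local monodromies of $\GGG$ at $0$ and $\infty$ are direct sums of Jordan blocks indexed by characters. As in the $\Sym$ proof, it suffices to bound, for each character $\chi$, the sum of the sizes of the blocks with eigenvalue $\chi$; tensoring with $\LL_{\bar\chi}$ reduces to the case $\chi=\mathbf{1}$. Next, Proposition \ref{mainprop} applied to the sign character (combined with the vanishing of $\HH^0_c$ and $\HH^2_c$ of $\FF$ that was established in the $\Sym$ argument, which uses only that $\FF$ has no negligible components) gives
\[
\HH^0_c(\GG_{m,\bar k},\wedge^{\ast r}\FF[1])=\wedge^r\HH^1_c(\GG_{m,\bar k},\FF),
\]
with all other compactly supported cohomology groups vanishing.

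Writing the semisimplification of $\HH^1_c(\GG_{m,\bar k},\FF)$ as $\bigoplus_{j\in J_{\mathbf 1}}W_j$ with $W_j$ pure of weight $1-j$ and $\dim W_j=n_{\mathbf{1},j}$ (exactly as in the $\Sym$ lemma), the standard decomposition of an exterior power of a direct sum yields
\[
\wedge^r\HH^1_c(\GG_{m,\bar k},\FF)=\bigoplus_{(i_j),\,\sum i_j=r}\bigotimes_{j\in J_{\mathbf 1}}\wedge^{i_j}W_j,
\]
where each summand is pure of weight $\sum_j i_j(1-j)$ and of dimension $\prod_j\binom{n_{\mathbf{1},j}}{i_j}$. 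This is the only substantive difference from the $\Sym$ case: the combinatorial factor $\binom{n_j+i_j-1}{i_j}$ is replaced by $\binom{n_j}{i_j}$.

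From here the remainder of the argument is identical to the previous lemma. Using the exact sequence
\[
0\to\HH^0(\PP^1_{\bar k},j_\star\GGG)\to\GGG^{I_0}\oplus\GGG^{I_\infty}\to\HH^1_c(\GG_{m,\bar k},\GGG)\to\HH^1(\PP^1_{\bar k},j_\star\GGG)\to 0,
\]
each tame unipotent Jordan block of size $e$ in the monodromy of $\GGG$ at $0$ or $\infty$ forces an eigenvalue of weight $\le r-e$ in $\HH^1_c(\GG_{m,\bar k},\GGG)$, up to the worst case factor of $2$ from possible cancellation against $\HH^0(\PP^1_{\bar k},j_\star\GGG)$. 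Combined with the embedding $\HH^1_c(\GG_{m,\bar k},\GGG)\hookrightarrow\HH^0_c(\GG_{m,\bar k},\wedge^{\ast r}\FF[1])=\wedge^r\HH^1_c(\GG_{m,\bar k},\FF)$ and the weight/dimension data of each summand above, we obtain the bound
\[
2\cdot\sum_{(i_j),\sum i_j=r}\prod_{j\in J_{\mathbf 1}}\binom{n_{\mathbf 1,j}}{i_j}\sum_{j\in J_{\mathbf 1}}j i_j
\]
for the total sum of Jordan block sizes with trivial eigenvalue at $0$ and $\infty$. Summing over all characters $\chi$ and dividing by $2$ (since the generic rank of $\GGG$ equals half this total) produces $B_{\FF,r}$, as desired. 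I do not anticipate any genuine obstacle: the whole point is that the $\wedge$ case is obtained by a mechanical substitution, the only conceptual check being that Proposition \ref{mainprop} gives the correct $\wedge^r$ identification on cohomology for a degree-$0$ vector space (where no Koszul signs intervene).
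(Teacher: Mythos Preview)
Your proposal is correct and is exactly the approach the paper intends: the paper's own proof is simply the phrase ``In the same way one can prove,'' and your write-up spells out precisely that mechanical substitution of $\wedge$ for $\Sym$, with the combinatorial factor $\binom{n_j+i_j-1}{i_j}$ replaced by $\binom{n_j}{i_j}$. There is nothing to add.
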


\begin{prop}
 With the notation and hypotheses of the previous lemmas, the generic rank of $\HHH^{-1}(\Sym^{\ast (r-i)}\FF[1]\ast\wedge^{\ast i}\FF[1])$ is bounded by
\begin{equation}\label{C}
 M_{\FF,r,i}:= A_{\FF,r-i}{n\choose i}+B_{\FF,i}{{n-1+r-i}\choose {r-i}}.
\end{equation}
\end{prop}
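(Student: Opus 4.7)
The plan is to mimic the proofs of the preceding two lemmas, applied to the object $M:=\Sym^{\ast(r-i)}\FF[1]\ast\wedge^{\ast i}\FF[1]$, with $\GGG:=\HHH^{-1}(M)$. For each fixed character $\chi$ of $\bar k^\star$, I first tensor $\FF$ with $\LL_{\bar\chi}$: this replaces $M$ by $M\otimes\LL_{\bar\chi}$ (using $(K\otimes\LL_{\bar\chi})^{\ast s}\cong K^{\ast s}\otimes\LL_{\bar\chi}$ inherited by $\Sym^{\ast-}$, $\wedge^{\ast-}$ and their convolution), turns $\chi$-Jordan blocks of the local monodromies at $0$ and $\infty$ into unipotent ones, and preserves all the hypotheses. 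It thus suffices to bound, for each $\chi$ separately, the sum of sizes of unipotent Jordan blocks of the twisted $\GGG$ at $0$ and $\infty$ (writing $n_j:=n_{\mathbf 1, j}$), and sum the resulting estimates over $\chi$.

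The two ingredients needed are (a) an injection from $\HH^1_c(\GG_{m,\bar k},\GGG)$ into a well-understood Frobenius module, and (b) a translation of unipotent Jordan block lengths into weight deficits of eigenvalues in that module. For (a), since $\FF$ is a middle extension without negligible components, $\R\Gamma_c(\GG_{m,\bar k},\FF[1])\cong W[0]$ for $W:=\HH^1_c(\GG_{m,\bar k},\FF)$; Proposition \ref{mainprop} and K\"unneth then give
$$
\R\Gamma_c(\GG_{m,\bar k},M)\cong(\Sym^{r-i}W\otimes\wedge^i W)[0],
$$
concentrated in degree $0$, and the distinguished triangle $\GGG[1]\to M\to\HHH^0(M)\to\GGG[2]$ yields $\HH^1_c(\GG_{m,\bar k},\GGG)\hookrightarrow\Sym^{r-i}W\otimes\wedge^i W$. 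For (b), $\GGG$ is everywhere tamely ramified (Proposition \ref{tame}) and mixed of weight $\leq r-1$ since $M$ is mixed of weight $\leq r$, so the argument of the $\Sym$ lemma bounds the total length of unipotent Jordan blocks of $\GGG$ at $0$ and $\infty$ by $2\sum_\lambda(r-w(\lambda))$, summed over Frobenius eigenvalues of $\HH^1_c(\GG_{m,\bar k},\GGG)$.

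To close, I decompose $W=\bigoplus_{j\in J_{\mathbf 1}}W_j$ with $W_j$ pure of weight $1-j$ and $\dim W_j=n_j$, so the summand of $\Sym^{r-i}W\otimes\wedge^i W$ indexed by compositions $(s_j)$ of $r-i$ and $(t_j)$ of $i$ is pure of weight $r-\sum_j j(s_j+t_j)$ with multiplicity $\prod_j\binom{n_j+s_j-1}{s_j}\binom{n_j}{t_j}$. Splitting $j(s_j+t_j)=js_j+jt_j$ and applying the Vandermonde identities $\sum_{\sum t_j=i}\prod_j\binom{n_j}{t_j}=\binom{n}{i}$ and $\sum_{\sum s_j=r-i}\prod_j\binom{n_j+s_j-1}{s_j}=\binom{n+r-i-1}{r-i}$ to the factor summed trivially in each half yields, after summing over $\chi$, the bound $2A_{\FF,r-i}\binom{n}{i}+2B_{\FF,i}\binom{n+r-i-1}{r-i}=2M_{\FF,r,i}$, and dividing by $2$ (since the total Jordan block length of a tame $\GGG$ at $0$ and $\infty$ is twice its generic rank) completes the proof. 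The main obstacle is this final combinatorial reassembly, which must split the weight contributions so that each of the two kinds of binomial factors pairs with exactly one Vandermonde identity to recover $A_{\FF,r-i}\binom{n}{i}+B_{\FF,i}\binom{n+r-i-1}{r-i}$ in the precise form stated.
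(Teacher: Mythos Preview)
Your proof is correct but takes a genuinely different route from the paper's. The paper does not redo the Jordan block/weight analysis for $M=\Sym^{\ast(r-i)}\FF[1]\ast\wedge^{\ast i}\FF[1]$; instead it decomposes each factor via the truncation triangle into its $\HHH^0$ (punctual) and $\HHH^{-1}[1]$ pieces, and then bounds the generic rank of $\HHH^{-1}$ of each of the four resulting convolutions. Three of these are elementary (one vanishes, two are convolutions with punctual objects). For the remaining term $\HHH^{-1}({\mathcal A}_1\ast{\mathcal B}_1)$ it invokes the rank inequality from the proof of \cite[Theorem 26.1]{katz2010mellin},
\[
\mathrm{gen.rank}\,\HHH^{-1}({\mathcal A}_1\ast{\mathcal B}_1)\leq(\mathrm{gen.rank}\,{\mathcal A}_1)\dim\HH^0_c(\GG_{m,\bar k},{\mathcal B}_1)+(\mathrm{gen.rank}\,{\mathcal B}_1)\dim\HH^0_c(\GG_{m,\bar k},{\mathcal A}_1),
\]
and then simply plugs in the bounds $A_{\FF,r-i}$ and $B_{\FF,i}$ from the two preceding lemmas together with $\dim\HH^0_c(\GG_{m,\bar k},\wedge^{\ast i}\FF[1])=\binom{n}{i}$ and $\dim\HH^0_c(\GG_{m,\bar k},\Sym^{\ast(r-i)}\FF[1])=\binom{n+r-i-1}{r-i}$ from Proposition~\ref{mainprop}.

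So your argument is more self-contained (you avoid citing the external rank inequality and re-derive everything from the weight-drop mechanism), at the cost of the extra combinatorial reassembly via the two Vandermonde identities. The paper's argument is more modular: it treats the proposition as a formal consequence of the two lemmas plus a general convolution bound, with no new monodromy analysis needed. Both reach exactly the same bound $M_{\FF,r,i}$.
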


\begin{proof}
Let ${\mathcal A}_j=\HHH^{-j}(\Sym^{\ast(r-i)}\FF[1])[j]$ and ${\mathcal B}_j=\HHH^{-j}(\wedge^{\ast i}\FF[1])[j]$ for $j=0,1$. Then the generic rank of $\HHH^{-1}(\Sym^{\ast (r-i)}\FF[1]\ast\wedge^{\ast i}\FF[1])$ is less than or equal to the sum of the generic ranks of $\HHH^{-1}({\mathcal A}_j\ast{\mathcal B}_k)$ for $j,k\in\{0,1\}$. For $i=j=0$ it is a punctual object, so its $\HHH^{-1}$ vanishes. For $j=1$, $k=0$, the generic rank of ${\mathcal A}_1$ gets multiplied by the dimension of the punctual object ${\mathcal B}_0$, and similarly for $j=0$, $k=1$.
 For $j=k=1$, by (the proof of) \cite[Theorem 26.1]{katz2010mellin}, there is an inequality
$$
\mathrm{gen.rank}\,\HHH^{-1}({\mathcal A}_{1}\ast{\mathcal B}_{1})\leq
$$
$$
\leq (\mathrm{gen.rank}\,{\mathcal A}_{1})\dim\HH^0_c(\GG_{m,\bar k},{\mathcal B}_{1})
+ (\mathrm{gen.rank}\,{\mathcal B}_{1})\dim\HH^0_c(\GG_{m,\bar k},{\mathcal A}_{1})
$$ 
so
$$
\mathrm{gen.rank}\,\HHH^{-1}(\Sym^{\ast (r-i)}\FF[1]\ast\wedge^{\ast i}\FF[1])\leq
$$
$$
\leq (\mathrm{gen.rank}\,\Sym^{\ast(r-i)}\FF[1])(\dim\HH^0_c(\GG_{m,\bar k},{\mathcal B}_{0})+\dim\HH^0_c(\GG_{m,\bar k},{\mathcal B}_{1}))+
$$
$$
+ (\mathrm{gen.rank}\,\wedge^{\ast i}\FF[1])(\dim\HH^0_c(\GG_{m,\bar k},{\mathcal A}_{0})+\dim\HH^0_c(\GG_{m,\bar k},{\mathcal A}_{1}))=
$$
$$
= (\mathrm{gen.rank}\,\Sym^{\ast(r-i)}\FF[1])\dim\HH^0_c(\GG_{m,\bar k},\wedge^{\ast i}\FF[1])
$$
$$
+ (\mathrm{gen.rank}\,\wedge^{\ast i}\FF[1])\dim\HH^0_c(\GG_{m,\bar k},\Sym^{\ast(r-i)}\FF[1])=
$$
$$
=(\mathrm{gen.rank}\,\Sym^{\ast(r-i)}\FF[1])\dim\wedge^i\HH^0_c(\GG_{m,\bar k},\FF[1])
$$
$$
+ (\mathrm{gen.rank}\,\wedge^{\ast i}\FF[1])\dim\Sym^{r-i}\HH^0_c(\GG_{m,\bar k},\FF[1]).
$$
so the result follows from the previous two lemmas.
\end{proof}

\begin{cor}\label{Cmult}
 Let $\FF\in\Shg$ be an everywhere tamely ramified middle extension sheaf without negligible components, pure of weight $0$. Then
$$
C_{\FF[1],r}\leq\frac{1}{2}\sum_{i=0}^r M_{\FF,r,i}=\frac{1}{2}\sum_{i=0}^r\left(A_{\FF,r-i}{n\choose i}+B_{\FF,i}{{n-1+r-i}\choose {r-i}}\right).
$$
\end{cor}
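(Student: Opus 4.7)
The plan is to reduce the bound on $C_{\FF[1],r}$ to the bound $s_i\le M_{\FF,r,i}$ on generic ranks of $\HHH^{-1}(\Sym^{\ast(r-i)}\FF[1]\ast\wedge^{\ast i}\FF[1])$ from the preceding proposition, via Pieri's formula. Let $\rho$ denote the standard representation of ${\mathfrak S}_r$, and set
$$
r_i := \mathrm{gen.rank}\,\HHH^{-1}(\R(\ast\wedge^i\rho)\FF[1]) \quad (0\le i\le r-1),
$$
$$
s_i := \mathrm{gen.rank}\,\HHH^{-1}(\Sym^{\ast(r-i)}\FF[1]\ast\wedge^{\ast i}\FF[1]) \quad (0\le i\le r).
$$
By the construction of $C_{\FF[1],r}$ in the proof of Corollary \ref{UC} we have $C_{\FF[1],r}=\sum_{i=0}^{r-1}r_i$, and the preceding proposition gives $s_i\le M_{\FF,r,i}$.

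Next, I would transfer the Pieri decomposition of the proof of Proposition \ref{young} from the tensor to the convolution setting. The Frobenius-reciprocity argument there identifies $\Sym^{r-i}\FF\otimes\wedge^i\FF$ with $\R(\mathrm{Ind}^{{\mathfrak S}_r}_{{\mathfrak S}_{r-i}\times{\mathfrak S}_i}({\mathbf 1}\boxtimes\sigma))\FF$ and then uses Pieri to decompose the induced representation as $\wedge^{i-1}\rho\oplus\wedge^i\rho$ for $1\le i\le r-1$. The identical argument with $\otimes$ replaced by $\ast$ (using Lemma \ref{directimage} to commute $\R\mu_!$ with the $({\mathbf 1}\boxtimes\sigma)$-isotypic projector, together with the factorization $G^r\to G^{r-i}\times G^i\to G\times G\to G$ of the multiplication) gives
$$
\Sym^{\ast(r-i)}\FF[1]\ast\wedge^{\ast i}\FF[1]\cong\R(\ast\wedge^{i-1}\rho)\FF[1]\oplus\R(\ast\wedge^i\rho)\FF[1]
$$
for $1\le i\le r-1$, together with the boundary cases $\Sym^{\ast r}\FF[1]=\R(\ast\wedge^0\rho)\FF[1]$ and $\wedge^{\ast r}\FF[1]=\R(\ast\wedge^{r-1}\rho)\FF[1]$ (since $\wedge^0\rho={\mathbf 1}$ and $\wedge^{r-1}\rho=\sigma$).

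Taking $\HHH^{-1}$ and generic ranks yields $s_0=r_0$, $s_i=r_{i-1}+r_i$ for $1\le i\le r-1$, and $s_r=r_{r-1}$. Summing telescopes to $\sum_{i=0}^r s_i=2\sum_{i=0}^{r-1}r_i=2C_{\FF[1],r}$, whence
$$
C_{\FF[1],r}=\tfrac12\sum_{i=0}^r s_i\le\tfrac12\sum_{i=0}^r M_{\FF,r,i}.
$$

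The only delicate step is the transfer of Pieri's formula to the convolution setting, but this is essentially formal: the branching of $\mathrm{Ind}^{{\mathfrak S}_r}_{{\mathfrak S}_{r-i}\times{\mathfrak S}_i}({\mathbf 1}\boxtimes\sigma)$ into $\wedge^{i-1}\rho\oplus\wedge^i\rho$ is a fact of representation theory that does not depend on whether the ${\mathfrak S}_r$-action on $\FF[1]^{\ast r}$ is realized by permutation of tensor factors or by permutation of convolution factors. Apart from that, the argument is pure bookkeeping.
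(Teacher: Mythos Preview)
Your proof is correct and follows essentially the same approach as the paper: both use the Pieri decomposition $\Sym^{\ast(r-i)}\FF[1]\ast\wedge^{\ast i}\FF[1]\cong\R(\ast\wedge^{i-1}\rho)\FF[1]\oplus\R(\ast\wedge^i\rho)\FF[1]$ (together with the boundary cases) to convert the sum $\sum_i s_i$ into $2\sum_i r_i=2C_{\FF[1],r}$, and then apply the preceding proposition's bound $s_i\le M_{\FF,r,i}$. Your explanation of why the Pieri decomposition transfers to the convolution setting is a bit more explicit than the paper's, but the argument is the same.
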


\begin{proof}
 By definition
$$
C_{\FF[1],r}=\sum_{i=0}^{r-1}\mathrm{gen.rank}\;\HHH^{-1}(\R(\ast\wedge^i\rho)\FF[1]).
$$
Using that $\Sym^{\ast (r-i)}\FF[1]\ast\wedge^{\ast i}\FF[1]=\R(\ast(\wedge^{i-1}\rho\oplus\wedge^i\rho))\FF[1]
=\R(\ast\wedge^{i-1}\rho)\FF[1]\oplus\R(\ast\wedge^i\rho)\FF[1]$ we get $$\mathrm{gen.rank}\;\HHH^{-1}(\Sym^{\ast (r-i)}\FF[1]\ast\wedge^{\ast i}\FF[1])=
$$
$$=\mathrm{gen.rank}\;\HHH^{-1}(\R(\ast\wedge^{i-1}\rho)\FF[1])+\mathrm{gen.rank}\;\HHH^{-1}(\R(\ast\wedge^i\rho)\FF[1])$$
for $i=0,\ldots,r$. Taking the sum over all $i=0,\ldots,r$ we deduce:
$$
2\cdot C_{\FF[1],r}=\sum_{i=0}^r\mathrm{gen.rank}\;\HHH^{-1}(\Sym^{\ast (r-i)}\FF[1]\ast\wedge^{\ast i}\FF[1]).
$$
We conclude by the previous proposition.
\end{proof}

\begin{exa} \label{normexa1} Let $g\in k[x]$ be a square-free polynomial of degree $d$ prime to $p$ such that $g'$ has no factors with multiplicity $\geq p$, and let $\FF\in\Shg$ be the kernel of the trace map $g_\star\QQ\to\QQ$. Then $t\in U_{\FF[1],r}(\bar k)$ for every $t$ which is not a product of $r$ critical values of $g$. In particular, for every such $t\in k_m^\star$, we have an estimate
$$
\left|\#\{x\in k_{mr}|\Nm_{k_{mr}/k_m}g(x)=t\}-\frac{q^{mr}-1}{q^m-1}\right|\leq C_{\FF[1],r} q^{\frac{m(r-1)}{2}}.
$$
Morover, we have a bound
$$C_{\FF[1],r}\leq\frac{1}{2}\sum_{i=0}^r{{d-1}\choose i}\left[(r+i){{d-2+r-i}\choose{r-i}}+(d-1)\sum_{j=0}^{r-i-1}{{d-3+j}\choose{j}}(r-i-j)\right].$$
\end{exa}

\begin{rem}\label{compare}\emph{The given bound for $C_{\FF[1],r}$ is polynomial in $r$ if $d$ is fixed. Compare with \cite[Theorem 3.2]{rl2010number}, where the (exponential in $r$) bound $r(d-1)^r$ was obtained using Weil descent.}
 
\end{rem}

\begin{proof}
The first part is a consequence of propositions \ref{negligible} and \ref{Sr}, since $\FF$ is tamely ramified everywhere and has no negligible components (because $\FF$ is a middle extension sheaf whose monodromy is trivial at $0$ and splits as the direct sum of all non-trivial characters with trivial $d$-th power at infinity). The estimate follows from the equality $f_{g_\star\QQ}^{\Nm,r}=f_{\QQ}^{\Nm,r}+f_{\FF}^{\Nm,r}$.

From the exact sequence
$$
0\to\FF\to g_\star\QQ\to\QQ\to 0
$$
we deduce $\dim\HH^1_c(\GG_{m,\bar k},\FF)=\dim\HH^1_c(\AAA^1_{\bar k},\FF)+\dim(\FF_0)=\dim\HH^1_c(\AAA^1_{\bar k},g_\star\QQ)-\dim\HH^1_c(\AAA^1_{\bar k})+\dim(\FF_0)=\dim(\FF_0)=d-1$.

If $\chi={\mathbf 1}$ is the trivial character, $n_{\chi,1}=d-1$ and $n_{\chi,j}=0$ for every $j\neq 1$. If $\chi\neq{\mathbf 1}$ but $\chi^d={\mathbf 1}$, $n_{\chi,1}=1$, $n_{\chi,0}=d-2$ and $n_{\chi,j}=0$ for every $j>1$. By equations (\ref{A}) and (\ref{B}) we get, for every $i=0,\ldots,r$:
$$
A_{\FF,r-i}={{d-2+r-i}\choose{r-i}}(r-i)+(d-1)\sum_{j=0}^{r-i-1}{{d-3+j}\choose{j}}(r-i-j)
$$
and
$$
B_{\FF,i}= {{d-1}\choose i}\cdot i+(d-1){{d-2}\choose{i-1}}=2i{{d-1}\choose i}
$$
so
$$
M_{\FF,r,i}={{d-1}\choose i}\left[(r+i){{d-2+r-i}\choose{r-i}}+(d-1)\sum_{j=0}^{r-i-1}{{d-3+j}\choose{j}}(r-i-j)\right].
$$
We conclude by corollary \ref{Cmult}.
\end{proof}

\begin{rem}\label{remark}\emph{If $g$ is square-free, it follows from \cite[Lemma 3.1]{rl2010number} that $\HHH^0(\FF[1]^{\ast r})=0$. In particular, $U_{\FF[1],r}=\GG_{m,\bar k}$ for every $r\geq 1$, so the estimate holds for every $m\geq 1$ and every $t\in k_m^\star$.}
\end{rem}

\begin{exa} \label{normexa2} Let $g\in k[x]$ be a polynomial of degree $d$ prime to $p$ and $\chi:k^\star\to\QQ^\star$ a non-trivial multiplicative character of order $n$. Suppose that $g$ is not a power of $x$, and no root of $g$ has multiplicity divisible by $n$. Let $\FF=\LL_{\chi(g)}$. Then $t\in U_{\FF[1],r}(\bar k)$ for every $t$ which is not a product of $r$ roots of $g$. In particular, for every such $t\in k_m^\star$, we have an estimate
$$
\left|\sum_{\Nm_{k_{mr}/k_m}(x)=t}\chi(\Nm_{k_{mr}/k}(g(x)))\right|\leq C_{\FF[1],r} q^{\frac{m(r-1)}{2}}.
$$
Let $e$ be the largest power of $x$ that divides $g(x)$. Then if $\chi^e\neq\chi^d$, we have a bound
$$
C_{\FF[1],r}\leq \frac{1}{2}\sum_{i=0}^r2\left({{a-1+r-i}\choose{r-i}}{{a-1}\choose{i-1}}+{a\choose i}\sum_{j=0}^{r-i-1}{{a+j-2}\choose j}(r-i-j)\right)
$$
and, if $\chi^e=\chi^d$,
$$
C_{\FF[1],r}\leq \frac{1}{2}\sum_{i=0}^r\left(2{{a-1+r-i}\choose{r-i}}{{a-1}\choose{i-1}}+{a\choose i}\sum_{j=0}^{r-i-1}{{a+j-3}\choose j}(r+1-i-j)(r-i-j)\right)
$$
where $a$ is the number of distinct roots of $g$ in $\bar k^\star$.
\end{exa}

\begin{proof}
 Again this is just applying proposition \ref{Sr} and formulas (\ref{A}), (\ref{B}) and (\ref{C}) for the rank. The hypotheses on $g$ imply that $\LL_{\chi(g)}$ is a middle extension and ramified at least at one point of $\GG_{m,\bar k}$, and in particular is not negligible. The dimension of $\HH^1_c(\GG_{m,\bar k},\LL_{\chi(g)})$ is $a$ by the Euler-Poincar\'e formula, since $\LL_{\chi(g)}$ is everywhere tamely ramified. The monodromy at $0$ is the character $\chi^e$, and the monodromy at infinity is $\chi^d$, hence the different bounds for $C_{\FF[1],r}$ depending on them being equal or not.
\end{proof}

In order to obtain sharper results we will make use of a certain algebraic group, the equivalent to the monodromy group of the Fourier transform in the trace case. This group is defined and studied in \cite{katz2010mellin}. Given a geometrically semisimple (e.g. pure of some weight $w$) object $\PPP\in\Perv$ without negligible components, let $\langle\PPP\rangle$ be the full subcategory of the Tannakian category of perverse sheaves on $\GG_{m,\bar k}$ modulo negligible sheaves with the convolution operator \cite[Th\'eor\`eme 3.7.5]{gabber1996faisceaux} tensor-generated by $\PPP\otimes\bar k$. By the fundamental theorem of Tannakian categories \cite[Theorem 2.11]{deligne1981tannakian}, $\langle\PPP\rangle$ is tensor-equivalent to the category of representations of a reductive algebraic group $G\subseteq\mathrm{GL}(V)$, where $V=\HH^0(\AAA^1_{\bar k},j_{0!}\PPP)$ is the fibre functor evaluated at $\PPP$ \cite[Theorem 3.1]{katz2010mellin}. Under this equivalence, the class of $\PPP$ corresponds to the ``standard'' representation $G\hookrightarrow\mathrm{GL}(V)$, and the class of $\PPP^{\ast r}$ (respectively $\Sym^{\ast r}\PPP$, $\wedge^{\ast r}\PPP$) corresponds to its $r$-th tensor power (resp. its $r$-th symmetric power, its $r$-th alternating power). More generally, for every finite dimensional representation $\rho:{\mathfrak S}_r\to\GL(W)$ of ${\mathfrak S}_r$, the class of $\R(\ast\rho)\PPP$ corresponds to $\Hom_{{\mathfrak S}_r}(W,V^{\otimes r})$.

\begin{prop}\label{subcharacters}
 Let $\PPP\in\Perv$ be a perverse sheaf pure of weight $w\in {\mathbb Z}$ without negligible components, $G\subseteq\mathrm{GL}(V)$ the corresponding reductive algebraic group and $r\geq 1$ an integer. Then

\begin{enumerate}
 \item  $1\in U_{\PPP,r}(\bar k)$ if and only the fixed subspace of the representation $\Sym^{r-i}V\otimes\wedge^iV$ of $G$ is zero for every $i=0,\ldots,r$.
 \item Given a prime to $p$ integer $n$, the $n$-th roots of unity are in $U_{\PPP,r}(\bar k)$ if and only if the representation $\Sym^{r-i}V\otimes\wedge^iV$ of $G$ does not contain a subcharacter with trivial $n$-th power for any $i=0,\ldots,r$.
  \item $U_{\PPP,r}=\GG_{m,k}$ if and only if the representation $\Sym^{r-i}V\otimes\wedge^iV$ of $G$ does not contain subcharacters of finite prime to $p$ order for any $i=0,\ldots,r$.
\end{enumerate}
\end{prop}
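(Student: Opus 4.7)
The plan is to translate the vanishing condition defining $U_{\PPP,r}$ into a purely representation-theoretic statement via the Tannakian equivalence $\langle\PPP\rangle\simeq\mathrm{Rep}(G)$ recalled just before the proposition.

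First, by the construction of $U_{\PPP,r}$ in the proof of Corollary~\ref{UC}, a point $a\in\bar k^\times$ lies in $U_{\PPP,r}(\bar k)$ if and only if $\HHH^0(\R(\ast\wedge^i\rho)\PPP)_a=0$ for every $i=0,\ldots,r-1$, where $\rho$ is the standard representation of ${\mathfrak S}_r$. The Pieri computation already exploited in the proof of Proposition~\ref{young} gives, for $1\le j\le r-1$,
$$
\Sym^{\ast(r-j)}\PPP\ast\wedge^{\ast j}\PPP\cong\R(\ast\wedge^{j-1}\rho)\PPP\oplus\R(\ast\wedge^j\rho)\PPP,
$$
while $\Sym^{\ast r}\PPP=\R(\ast\wedge^0\rho)\PPP$ and $\wedge^{\ast r}\PPP=\R(\ast\wedge^{r-1}\rho)\PPP$ at the two endpoints. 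Hence the condition above is equivalent to the vanishing of $\HHH^0(\Sym^{\ast(r-j)}\PPP\ast\wedge^{\ast j}\PPP)_a$ for every $j=0,\ldots,r$.

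Second, the heart of the proof is a Tannakian dictionary: for any perverse $M\in\langle\PPP\rangle$ and any $a\in\bar k^\times$, the stalk $\HHH^0(M)_a$ is non-zero if and only if the skyscraper $\delta_a$ occurs as a composition factor of $M$ in $\Perv(\GG_{m,\bar k},\QQ)$. Since $\delta_a$ is non-negligible, its class survives in $\langle\PPP\rangle$ and, being an invertible object of this Tannakian category, corresponds under the equivalence to a character $\chi_a\colon G\to\GG_m$. Because $G$ is reductive over a characteristic-zero field, subrepresentations, subquotients and direct summands of $G$-modules all coincide. Applying this to $M=\Sym^{\ast(r-j)}\PPP\ast\wedge^{\ast j}\PPP$, whose associated $G$-representation is $\Sym^{r-j}V\otimes\wedge^j V$, we conclude that $a\in U_{\PPP,r}(\bar k)$ if and only if $\chi_a$ does not appear as a subrepresentation of any $\Sym^{r-j}V\otimes\wedge^j V$ for $j=0,\ldots,r$.

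The three assertions now follow by specialization. For (1), take $a=1$: the character $\chi_1$ is trivial, so the non-occurrence of $\chi_1$ in $\Sym^{r-j}V\otimes\wedge^j V$ is precisely the vanishing of its $G$-fixed subspace. For (2), $\chi_\zeta^n=\chi_{\zeta^n}=\mathbf{1}$ exactly when $\zeta^n=1$, and conversely any one-dimensional subrepresentation in $\langle\PPP\rangle$ corresponds to some $\delta_a$ with $a\in\bar k^\times$; an $n$-th root of unity $\zeta$ with $\delta_\zeta\notin\langle\PPP\rangle$ is automatically in $U_{\PPP,r}$ since the hypothetical $\chi_\zeta$ cannot appear. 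For (3), since $\bar k^\times=\bigcup_{(n,p)=1}\mu_n(\bar k)$ is a torsion group of prime-to-$p$ order, every character of $G$ that can possibly arise has finite prime-to-$p$ order, so "no subcharacter appears" is equivalent to "no subcharacter of finite prime-to-$p$ order appears".

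The main obstacle is justifying the Tannakian dictionary in the second paragraph: identifying the points of the support of $\HHH^0(M)$ for $M\in\langle\PPP\rangle$ with the characters of $G$ occurring in the corresponding representation. This rests on the fact that the invertible objects of the convolution category modulo negligibles on $\GG_{m,\bar k}$ are exactly the skyscrapers $\delta_a$, together with the identification of these with characters of $G$ under $\langle\PPP\rangle\simeq\mathrm{Rep}(G)$; these are standard inputs from \cite{katz2010mellin}.
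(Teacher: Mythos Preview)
Your argument is correct and follows essentially the same route as the paper's proof: both pass from the defining condition on $\HHH^0(\R(\ast\wedge^i\rho)\PPP)$ to $\HHH^0(\Sym^{\ast(r-j)}\PPP\ast\wedge^{\ast j}\PPP)$ via the Pieri decomposition, then invoke the Tannakian equivalence to identify punctual composition factors $\delta_a$ with characters of $G$ (using \cite[Theorem 6.4]{katz2010mellin}). The only organizational difference is that the paper derives (1) and (3) as immediate consequences of (2), whereas you first establish the criterion for a general $a\in\bar k^\times$ and then specialize; the content is the same.
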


\begin{proof}
 (1) and (3) are direct consequences of (2). Let $n$ be a prime to $p$ positive integer, and let $\zeta\in \bar k^\star -U_{\PPP,r}(\bar k)$ such that $\zeta^n=1$. Then by definition of $U_{\PPP,r}$ there is some $i=0,\ldots,r-1$ such that $\zeta$ is in the support of $\HHH^0(\R(\ast\wedge^{i}\rho)\PPP)$. Then $\zeta$ is in the support of $\HHH^0(\Sym^{\ast(r-i)}\PPP\ast\wedge^{\ast i}\PPP)=\HHH^0(\R(\ast\wedge^{i-1}\rho)\PPP\oplus\R(\ast\wedge^i\rho)\PPP)$ (cf. the proof of proposition \ref{young}). In other words, $\Sym^{\ast(r-i)}\PPP\ast\wedge^{\ast i}\PPP$ contains the punctual object $\delta_\zeta$ as an irreducible component. Regarding their classes in the Tannakian category $\Perv/{\mathcal N}eg$ as representations of $G$, this means that $\Sym^{r-i}V\otimes\wedge^{ i}V$ contains the irreducible subrepresentation associated to the object $\delta_\zeta$, which is a character with trivial $n$-th power (since $\delta_\zeta^{\ast n}=\delta_{\zeta^n}=\delta_1$ is the identity object).

Conversely, every subrepresentation of $\Sym^{r-i}V\otimes\wedge^{ i}V$ which is a character of order divisible by $n$ gives an irreducible component of $\Sym^{\ast(r-i)}\PPP\ast\wedge^{\ast i}\PPP$ of the form $\delta_\zeta$ for some $\zeta\in\bar k^\star$ \cite[Theorem 6.4]{katz2010mellin}, and $\zeta^n=1$. So, in that case, $\zeta$ is in the support of $\HHH^0(\Sym^{\ast(r-i)}\PPP\ast\wedge^{\ast i}\PPP)=\HHH^0(\R(\ast\wedge^{i-1}\rho)\PPP\oplus\R(\ast\wedge^i\rho)\PPP)$ for some $i$, so it must be in the support of $\HHH^0(\R(\ast\wedge^i\rho)\PPP)$ for some $i$ and therefore is not in $U_{\PPP,r}(\bar k)$.
\end{proof}

\begin{cor}
 Let $n$ be the order of $G/G_0$, where $G_0$ is the identity connected component of $G$. Then $\bar k^\star-\mu_n(\bar k)\subseteq U_{\PPP,r}(\bar k)$ for every $r\geq 1$, where $\mu_n(\bar k):=\{x\in\bar k|x^n=1\}$. In particular, if $G$ is connected, $\bar k^\star-\{1\}\subseteq U_{\PPP,r}(\bar k)$ for every $r\geq 1$.
\end{cor}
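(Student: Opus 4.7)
The plan is to combine Proposition \ref{subcharacters} with the fact that the character lattice of a connected reductive group is torsion-free. From the proof of that proposition, a point $\zeta\in\bar k^\star$ fails to lie in $U_{\PPP,r}(\bar k)$ precisely when $\delta_\zeta$ appears as an irreducible component of $\Sym^{\ast(r-i)}\PPP\ast\wedge^{\ast i}\PPP$ for some $i\in\{0,\ldots,r\}$, and under the Tannakian equivalence $\langle\PPP\rangle\simeq\mathrm{Rep}\,G$ this corresponds to a $1$-dimensional subrepresentation $\chi:G\to\GG_m$ of $\Sym^{r-i}V\otimes\wedge^iV$ identified with the object $\delta_\zeta$. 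So it suffices to show that every such $\chi$ satisfies $\chi^n=1$ and then translate this back via the identity $\delta_\zeta^{\ast n}=\delta_{\zeta^n}$ to conclude $\zeta^n=1$, i.e.\ $\zeta\in\mu_n(\bar k)$.

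For the central step, I would first observe that any $\chi$ of this form has finite order: since $\zeta\in\bar k^\star=\FFq^\star$ is automatically a root of unity, we have $\delta_{\zeta^N}=\delta_1$ for some $N\geq 1$, and hence $\chi^N$ is the trivial object in $\mathrm{Rep}\,G$. The restriction $\chi|_{G_0}$ is then a finite-order character of the connected reductive group $G_0$, but $\mathrm{Hom}(G_0,\GG_m)$ embeds into the character group of a maximal torus of $G_0$ and is therefore torsion-free. Hence $\chi|_{G_0}$ is trivial, $\chi$ factors through $G/G_0$, which is a finite group of order $n$, and $\chi^n=1$. The contrapositive yields the desired inclusion $\bar k^\star-\mu_n(\bar k)\subseteq U_{\PPP,r}(\bar k)$, and the connected case reduces to $n=1$.

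No serious obstacle appears here: once Proposition \ref{subcharacters} and the Tannakian description of $\langle\PPP\rangle$ from \cite{katz2010mellin} are available, the result is essentially formal. The only step that deserves a moment of care is the translation between the order of $\chi$ as an element of $\mathrm{Hom}(G,\GG_m)$ and the order of $\zeta$ in $\bar k^\star$, which is immediate from the multiplicativity $\delta_\zeta\ast\delta_{\zeta'}=\delta_{\zeta\zeta'}$ already recorded earlier in the article.
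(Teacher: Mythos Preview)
Your proof is correct and follows essentially the same approach as the paper: both argue by contraposition that a bad point $\zeta$ yields a finite-order subcharacter $\chi$ of some $\Sym^{r-i}V\otimes\wedge^iV$, observe that $\chi$ must be trivial on the connected component $G_0$ and hence factors through $G/G_0$, and conclude $\zeta^n=1$. The only cosmetic difference is that you invoke torsion-freeness of $\Hom(G_0,\GG_m)$ to kill $\chi|_{G_0}$, whereas the paper phrases this as saying the kernel $G'$ of $\chi$ contains $G_0$ (since $G/G'$ is finite), so that $|G/G'|$ divides $|G/G_0|=n$.
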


\begin{proof}
Let $z\in\bar k^\star$, and suppose that $z\notin U_{\PPP,r}(\bar k)$. Then by the proof of the previous proposition, $\Sym^{r-i}V\otimes\wedge^{ i}V$ contains a subcharacter of order $a$ for some $i=0,\ldots,r$, where $a$ is the multiplicative order of $z$. Let $G'$ be the kernel of that subcharacter, then $G'$ is a closed normal subgroup of $G$, and $G/G'$ is a quotient of $G/G_0$ of order $a$. Therefore $a=|G/G'|$ divides $n=|G/G_0|$, so $z$ is an $n$-th root of unity.
\end{proof}

\begin{rem}
  \emph{By \cite[Theorem 6.5]{katz2010mellin}, under these hypotheses $G/G_0$ is actually cyclic of prime-to-$p$ order $n$, and this group is isomorphic to the group of $\zeta\in\bar k^\star$ such that $\delta_z$ is in the Tannakian subcategory of $\Perv/{\mathcal N}eg$ tensor-generated by $\PPP\otimes\bar k$.}
\end{rem}

\begin{cor}\label{constants}
 If $G$ contains the scalars $\QQ^\star$, then $U_{\PPP,r}=\GG_{m,k}$ for every $r\geq 1$.
\end{cor}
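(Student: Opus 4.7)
The plan is to deduce the statement directly from part (3) of Proposition \ref{subcharacters}, by analyzing how a hypothetical finite-order subcharacter of $\Sym^{r-i}V\otimes\wedge^i V$ would behave upon restriction to the central scalars $\QQ^\star\subseteq G$.

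By Proposition \ref{subcharacters}(3) applied to our $\PPP$, showing $U_{\PPP,r}=\GG_{m,k}$ is equivalent to verifying that for every $i=0,\ldots,r$ the $G$-representation $\Sym^{r-i}V\otimes\wedge^i V$ contains no one-dimensional subrepresentation on which $G$ acts through a character of finite prime-to-$p$ order. I would argue this by contradiction: assume such a subcharacter $\chi:G\to\QQ^\star$ of finite order $n$ (prime to $p$) exists inside $\Sym^{r-i}V\otimes\wedge^i V$, and restrict $\chi$ to the subgroup of scalars $\QQ^\star\hookrightarrow G\subseteq\GL(V)$.

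The key computation is then immediate on both sides. On the one hand, the restriction $\chi|_{\QQ^\star}$ is an algebraic character of $\GG_m$, hence of the form $\lambda\mapsto\lambda^k$ for some $k\in\Z$; since $\chi^n=1$ forces $\lambda^{nk}=1$ for all $\lambda\in\QQ^\star$, we get $k=0$, so $\chi|_{\QQ^\star}$ is trivial. On the other hand, a scalar $\lambda\in\QQ^\star$ acts on $V$ as $\lambda\cdot\mathrm{id}$, hence on $\Sym^{r-i}V$ as $\lambda^{r-i}$ and on $\wedge^i V$ as $\lambda^i$, so it acts on $\Sym^{r-i}V\otimes\wedge^i V$ (and on every subspace of it) by multiplication by $\lambda^{(r-i)+i}=\lambda^r$. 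Thus $\chi|_{\QQ^\star}$ equals the non-trivial character $\lambda\mapsto\lambda^r$ for $r\geq 1$, contradicting the previous paragraph.

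This contradiction rules out the existence of any such $\chi$ for every $i=0,\ldots,r$, so the hypothesis of Proposition \ref{subcharacters}(3) is satisfied and we conclude $U_{\PPP,r}=\GG_{m,k}$. There is essentially no obstacle in the argument; the only point to keep in mind is that the characters referred to in Proposition \ref{subcharacters}(3) are algebraic characters of the reductive group $G$, so restriction to the algebraic subtorus of scalars yields an algebraic character of $\GG_m$, which is precisely what makes the finite-order condition force triviality on the scalars.
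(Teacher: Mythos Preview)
Your proof is correct and follows essentially the same approach as the paper's: both use Proposition \ref{subcharacters}(3) and the observation that the scalars $\QQ^\star$ act on any subrepresentation of $V^{\otimes r}$ (in particular on each $\Sym^{r-i}V\otimes\wedge^i V$) by $\lambda\mapsto\lambda^r$, which is incompatible with a finite-order character for $r\geq 1$. The only cosmetic difference is that the paper reduces at once to ruling out finite-order subcharacters of the full $V^{\otimes r}$, while you work piece by piece with each $\Sym^{r-i}V\otimes\wedge^i V$; the content is identical.
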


\begin{proof}
 By proposition \ref{subcharacters}(3), it suffices to show that the representation $V^{\otimes r}$ of $G$ does not have subcharacters of finite order. A scalar $\lambda\in\QQ^\star\subseteq G$ acts on $V^{\otimes r}$ by multiplication by $\lambda^r$. In particular, on any $G$-invariant subspace of $V^{\otimes r}$ the action of the quotient $\QQ^\star/{\mathbb \mu}_r$ is faithful, so it can never factor through a finite quotient. 
\end{proof}

\begin{exa}
 Let $g\in k[x]$ be a Morse polynomial of degree $d$ prime to $p$ such that its set of critical values is not isomorphic to a multiplicative translate of itself. Let $\FF$ be the kernel of the trace map $g_\star\QQ\to\QQ$. Then $U_{\FF[1],r}=\GG_{m,k}$ for every $r\geq 1$ and for every $r\geq 1$ and every $t\in k_m^\star$ we have
$$
\left|\#\{x\in k_{mr}|\Nm_{k_{mr}/k_m}g(x)=t\}-\frac{q^{mr}-1}{q^m-1}\right|\leq C_{\FF[1],r} q^{\frac{m(r-1)}{2}}
$$
where $C_{\FF[1],r}$ is bounded as in \ref{normexa1} if $g$ is square-free. In particular, for every $e$ dividing $q^m-1$ we have the estimate
 $$
\left|\#\{(x,y)\in k_{mr}^2|y^{(q^m-1)/e}=g(x)\}-q^{mr}-(\delta-1)\right|
\leq C_{\FF[1],r} (q^m-1)q^{\frac{m(r-1)}{2}}
$$
where $\delta$ is the number of roots of $g$ in $k_m$.
\end{exa}

\begin{proof}
 By \cite[Theorem 17.6]{katz2010mellin}, under these hypotheses $G$ is the entire $\mathrm{GL}(V)$, which contains the scalars, so the result follows from corollary \ref{constants}.

The second estimate is an easy consequence of the identity
$$
\#\{(x,y)\in k_{mr}^2|y^{(q^m-1)/e}=g(x)\}=\delta+\frac{q^m-1}{e}\sum_{\lambda^e=1}\#\{x\in k_{mr}|\Nm_{k_{mr}/k_m}(g(x))=\lambda\}.
$$
\end{proof}

\begin{rem}
\emph{As noted above (remark \ref{remark}), if $g$ is square-free of degree prime to $p$ the result holds without any further hypotheses on $g$.}
\end{rem}

\begin{exa}
 Let $g\in k[x]$ be a square-free polynomial of degree $d$ with $g(0)\neq 0$ which is not of the form $h(x^n)$ for any $n\geq 2$, let $\chi:k^\star\to\QQ^\star$ be a multiplicative character such that $\chi^d$ is non-trivial and $\FF=\LL_{\chi(g)}$. Then $U_{\FF[1],r}=\GG_{m,k}$ for every $r\geq 1$. In particular, for every $r\geq 1$ and every $t\in k_m^\star$ one has
$$
\left|\sum_{\Nm_{k_{mr}/k_m}(x)=t}\chi(\Nm_{k_{mr}/k}(g(x)))\right|\leq C_{\FF[1],r} q^{\frac{m(r-1)}{2}}
$$
where $C_{\FF[1],r}$ is bounded as in \ref{normexa2}.
\end{exa}

\begin{proof}
 By \cite[Theorem 17.5]{katz2010mellin}, under these hypotheses $G=\mathrm{GL}(V)$ contains the scalars, so the result follows from corollary \ref{constants}.
 \end{proof}

\begin{exa}
 Let $g\in k[x]=\sum a_ix^i$ be a square-free polynomial of degree $d$ with $a_0\neq 0$ which is not of the form $h(x^n)$ for any $n\geq 2$ and such that $x^dg(1/x)$ is not a scalar multiple of a multiplicative translate of $g$, let $\chi:k^\star\to\QQ^\star$ be a multiplicative character such that $\chi^d$ is trivial and $\PPP=\LL_{\chi(g)}[1]$. Then $U_{\PPP,r}=\GG_{m,k}$ for every $r\neq d$ and $U_{\PPP,d}=\GG_{m,k}-\{(-1)^da_0\}$. In particular, for every $r\geq 1$ and every $t\in k_m^\star$ (except $t=(-1)^da_0$ when $r=d$) one has
$$
\left|\sum_{\Nm_{k_{mr}/k_m}(x)=t}\chi(\Nm_{k_{mr}/k}(g(x)))\right|\leq C_{\PPP,r} q^{\frac{m(r-1)}{2}}
$$
where $C_{\PPP,r}$ is bounded as in \ref{normexa2}.
\end{exa}

\begin{proof}
 By \cite[Theorem 23.1]{katz2010mellin}, under these hypotheses $G$ contains $\mathrm{SL}(V)$. Since $\mathrm{SL}(V)$ is connected, $\Sym^{r-i}V\otimes\wedge^i V=\Hom(\wedge^{d-i}V,\Sym^{r-i}V)$ has a subcharacter of finite order as a $\mathrm{SL}(V)$-representation if and only if it has non-trivial $\mathrm{SL}(V)$-fixed subspace, which happens only for $d-i=r-i=0$ or $1$. So for $r\neq d$ it has no subcharacters of finite order and therefore $U_{\PPP,r}=\GG_{m,k}$ by proposition \ref{subcharacters}, (3).

For $r=d$ and $i=r,r-1$, $\Sym^{r-i}V\otimes\wedge^i V$ has $\mathrm{SL}(V)$-fixed subspace of dimension $1$, on which $G$ acts via the determinant \cite[Corollary 4.2]{rlwan2010}. So $\Sym^{r-i}V\otimes\wedge^i V$ contains the determinant character as its only subcharacter of finite order, which by \cite[Theorem 23.1]{katz2010mellin} is the punctual sheaf $\delta_{(-1)^da_0}[0]$ as an element of $\Perv/{\mathcal N}eg$. In other words, the only possible punctual geometric irreducible component of $\Sym^{\ast(d-i)}\PPP\ast\wedge^{\ast i}\PPP$ is $\delta_{(-1)^da_0}[0]$. Therefore $U_{\PPP,d}=\GG_{m,k}-\{(-1)^da_0\}$.
 \end{proof}

\begin{exa}
 Let $g\in k[x]$ be an Artin-Schreier-reduced polynomial (i.e. it has no monomials with divisible by $p$ exponent) of degree $d$ prime to $p$ which is not of the form $h(x^n)$ for any $n\geq 2$, let $\psi:k\to\QQ^\star$ be a non-trivial additive character and $\PPP$ the shifted Artin-Schreier sheaf $\LL_{\psi(g)}[1]$. Then $U_{\PPP,r}=\GG_{m,k}$ for every $r\geq 1$. In particular, for every $r\geq 1$ and every $t\in k_m^\star$ one has
$$
\left|\sum_{\Nm_{k_{mr}/k_m}(x)=t}\psi(\Trace_{k_{mr}/k}(g(x)))\right|\leq C_{\FF,r} q^{\frac{m(r-1)}{2}}.
$$
\end{exa}

\begin{proof}
 By \cite[Theorem 17.4]{katz2010mellin}, under these hypotheses $G=\mathrm{GL}(V)$ contains the scalars, so the result follows from corollary \ref{constants}.
 \end{proof}

\begin{rem}
 \emph{By \cite[Theorem 5.1]{katz1988gauss}, in the previous example $\PPP^{\ast r}$, and a fortiori its subobjects $\R(\ast\wedge^i\rho)\PPP$, are smooth on $\GG_{m,k}$, tamely ramified at $0$ and totally wild at $\infty$. If conjecture \cite[7.6]{katz1988gauss} is true, then $\PPP^{\ast r}$ has a single slope $d/r$ at infinity. In that case, 
$$
\mathrm{gen.rank}(\R(\ast\wedge^i\rho)\PPP)=\frac{r}{d}\Swan_\infty(\R(\ast\wedge^i\rho)\PPP)=
$$
$$
=\frac{r}{d}\dim\HH^0_c(\GG_{m,\bar k},\R(\ast\wedge^i\rho)\PPP)=
$$
$$
=\frac{r}{d}\dim\R(\wedge^i\rho)\HH^0_c(\GG_{m,\bar k},\PPP)=\frac{r}{d}{{d+r-i-1}\choose {r}}{r-1\choose i}
$$
by remark \ref{rank}, so we would obtain a bound
$$
C_{\PPP,r}\leq\frac{r}{d}\sum_{i=0}^{r-1}{{d+r-i-1}\choose {r}}{r-1\choose i}.
$$
Without the conjecture, we can only assure that
$$
C_{\PPP,r}\leq s^{-1}\sum_{i=0}^{r-1}{{d+r-i-1}\choose {r}}{r-1\choose i}.
$$
where $s$ is the smallest slope of $\PPP^{\ast r}$ at infinity.
}
\end{rem}

\section{The situation over $k_r$}

In this last section we will briefly discuss what happens when the sheaf or derived category object $\FF$ is defined over the larger field $k_r$, but not over $k$. 

 Let $\FF\in K_0(G\otimes{k_r},\QQ)$ be an object defined on the geometrically connected commutative group scheme $G\otimes{k_r}$ over $k_r$. In principle it makes sense to consider its $r$-th local norm $L$-function $L^{\Nm,r}(\FF,k,t;T)$ at any $t\in k$, since $\Tr(\Frob_{k_{rs},u}|\FF_{\bar u})$ is defined for every $s\geq 1$ and every $u\in G(k_{rs})$. So we can ask, is this $L$-function rational? The answer is negative in general, as we can see in the following example for $G=\AAA^1$:

 Let $q$ be odd, $a\in k_2-k$ such that $\Tr_{k_2/k}(a)=0$ and $\FF=i_{a\star}\QQ$ the skyscraper sheaf supported on $a$. Then 
$$\sum_{\Tr_{k_{2s}/k_s}(u)=0} \Tr(\Frob_{k_{2s},u}|\FF_{\bar u})=\left\{\begin{array}{ll}
                                                                          1 & \mbox{if $\Tr_{k_{2s}/k_s}(a)=0$} \\
0 & \mbox{otherwise}
                                                                         \end{array}\right.
$$
Since $a^q+a=0$, by induction for every $s$ we have $a^{q^s}=(-1)^sa$, so $\Tr_{k_{2s}/k_s}(a)=0$ if $s$ is odd and $2a\neq 0$ if $s$ is even. Therefore
$$
L^{\Tr,2}(\FF,k,0;T)=\exp\sum_{s=1}^\infty \frac{T^{2s-1}}{2s-1}=\left(\frac{1+T}{1-T}\right)^{1/2}
$$
is not rational.

However, we have the following slightly weaker but equally useful result:
\begin{prop}
 Let $\FF\in K_0(G\otimes k_r,\QQ)$. Then $L^{\Nm,r}(\FF,k,t;T)^r$ is rational for every $t\in G(k)$, and all its reciprocal roots and poles have integral $q$-weight.
\end{prop}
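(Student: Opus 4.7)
The plan is to relate $L^{\Nm,r}(\FF,k,t;T)^r$ to the $r$-th norm $L$-function of the pushforward $\GGG := \pi_{r!}\FF \in K_0(G,\QQ)$, where $\pi_r:G\otimes k_r\to G$ is the canonical projection (a finite étale cover of degree $r$). Since $\GGG$ is an object on $G$ over $k$, Corollary \ref{maincor} applied directly yields that $L^{\Nm,r}(\GGG,k,t;T)$ is rational with reciprocal roots and poles of integral $q$-weight. Thus the desired conclusion will follow once I show that
$$
L^{\Nm,r}(\GGG,k,t;T) = L^{\Nm,r}(\FF,k,t;T)^r \qquad \text{for } t\in G(k),
$$
or equivalently that $f^{\Nm,r}_\GGG(k_s,t) = r\cdot f^{\Nm,r}_\FF(k_s,t)$ for all $s\geq 1$.

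For the first step, proper base change for the finite étale cover $\pi_r$ gives $(\pi_{r*}\FF)_{\bar u} \cong \bigoplus_{\bar v\to\bar u}\FF_{\bar v}$, and since $k_r\subseteq k_{rs}$ the geometric Frobenius $\Frob_{k_{rs},u}$ fixes each of the $r$ lifts $(u,\tau)$ of $u$ indexed by the embeddings $\tau:k_r\hookrightarrow k_{rs}$. Hence
$$
f_\GGG(k_{rs},u) = \sum_{\tau:k_r\hookrightarrow k_{rs}} f_\FF(k_{rs},(u,\tau)),
$$
and summing over $u\in G(k_{rs})$ with $\Nm_{k_{rs}/k_s}(u)=t$ produces $f^{\Nm,r}_\GGG(k_s,t) = \sum_\tau \sum_{u:\Nm u=t} f_\FF(k_{rs},(u,\tau))$.

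For each embedding $\tau=\tau_0\circ\sigma^j$, where $\sigma$ generates $\Gal(k_r/k)$ and $\tau_0$ is the canonical inclusion, the Galois element $\Frob_k^j\in\Gal(k_{rs}/k)$ acts on $(G\otimes k_r)(k_{rs})$ by $(v,\tau_0)\mapsto(v^{q^j},\tau_0\sigma^j)=(v^{q^j},\tau)$. Substituting $u=v^{q^j}$---a bijection on $\{u:\Nm u=t\}$ because $t\in G(k)$ is Galois-fixed---converts the inner sum to $\sum_v f_\FF(k_{rs},\Frob_k^j(v,\tau_0))$. The crucial input is now the $\Gal(\bar k/k)$-invariance of trace functions: viewing $G\otimes k_r$ as a $k$-scheme, one has $f_\FF(k_{rs},\gamma w) = f_\FF(k_{rs},w)$ for every $\gamma\in\Gal(\bar k/k)$, because Galois conjugation identifies stalks of $\FF\otimes\bar k$ and conjugates Frobenius by an element of the abelian group $\Gal(\bar k/k_{rs})$. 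Applying this with $\gamma=\Frob_k^j$ reduces the inner sum to $f^{\Nm,r}_\FF(k_s,t)$ independently of $j$, and summing over $j=0,\ldots,r-1$ yields $f^{\Nm,r}_\GGG(k_s,t) = r\cdot f^{\Nm,r}_\FF(k_s,t)$.

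The main subtlety is the last invocation of Galois invariance: the sheaf $\FF$ is a priori defined only over $k_r$, yet its trace function is $\Gal(\bar k/k)$-equivariant once we view the underlying scheme $G\otimes k_r$ as a $k$-scheme. This is what forces the $\tau$-coordinate to enter, since $(G\otimes k_r)(k_{rs})$ considered as a functor of $k$-algebras is the larger set $G(k_{rs})\times\Hom_k(k_r,k_{rs})$ on which $\Frob_k$ permutes the $\tau$-component nontrivially; individual values $f_\FF(k_{rs},(u,\tau))$ depend on $\tau$, but their sums over the norm-fiber do not. This cancellation is exactly what produces the factor of $r$, and is consistent with the counterexample preceding the proposition, where $L^{\Tr,2}(\FF,k,0;T)=\sqrt{(1+T)/(1-T)}$ is irrational while its square is a rational function with reciprocal roots $\pm 1$.
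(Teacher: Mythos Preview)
Your proof is correct and follows essentially the same line as the paper's. The only cosmetic difference is packaging: the paper forms $\GGG=\bigoplus_{\sigma\in\Gal(k_r/k)}\sigma^\star\FF$ and descends it to $\GGG_0$ on $G$, whereas you take $\GGG=\pi_{r!}\FF$ directly; since $\pi_r^\star\pi_{r!}\FF\cong\bigoplus_\sigma\sigma^\star\FF$ for the finite \'etale Galois cover $\pi_r$, these are the same object in $K_0(G,\QQ)$, and the subsequent verification that each Galois conjugate (your ``embedding $\tau$'') contributes the same amount to the norm sum over $t\in G(k)$ is identical in both arguments.
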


\begin{proof}
 By additivity, we may assume that $\FF$ is a single sheaf. Let $\GGG$ be the direct sum of all $\sigma^\star\FF$ for every $\sigma\in\Gal(k_r/k)$. Then $\GGG$ is invariant under $\Gal(k_r/k)$, so by descent there is a sheaf $\GGG_0\in K_0(G,\QQ)$ such that $\GGG=\pi_r^\star \GGG_0$, where $\pi_r:G\otimes k_r\to G$ is the projection. Therefore $L^{\Nm,r}(\GGG,k,t;T)$ is rational for every $t\in k$ by corollary \ref{maincor}.

Again by additivity, we have
$$
L^{\Nm,r}(\GGG,k,t;T)=\prod_\sigma L^{\Nm,r}(\sigma^\star \FF,k,t;T)
$$
where the product is taken over all $\sigma\in\Gal(k_r/k)$. We will conclude by showing that for every $\sigma\in\Gal(k_r/k)$ and every $t\in G(k)$ there is an equality $L^{\Nm,r}(\sigma^\star \FF,k,t;T)=L^{\Nm,r}(\FF,k,t;T)$.

Indeed we have, by definition,
$$
L^{\Nm,r}(\sigma^\star \FF,k,t;T)=\exp\sum_{s\geq 1}f_{\sigma^\star \FF}^{\Nm,r}(k_s,t)\frac{T^s}{s}
$$
and, for every $s\geq 1$,
$$
f_{\sigma^\star \FF}^{\Nm,r}(k_s,t)=\sum_{\Nm_{k_{sr}/k_s}(u)=t}f_{\sigma^\star \FF}(k_{sr},u)=\sum_{\Nm_{k_{sr}/k_s}(u)=t}f_{\FF}(k_{sr},\sigma(u))=
$$
$$
=\sum_{\Nm_{k_{sr}/k_s}(u)=t}f_{\FF}(k_{sr},u)=f_\FF^{\Nm,r}(k_s,t)
$$
since $u\mapsto \sigma(u)$ is a permutation of the set of $u\in G(k_{sr})$ such that $\Nm_{k_{sr}/k_s}(u)=t$.
\end{proof}

 In particular, many results proved in the previous sections can be applied in this case for every $t\in G(k)$ via the equality $f_\GGG^{\Nm,r}(k,t)=r\cdot f_\FF^{\Nm,r}(k,t)$. Most notably we have:

\begin{cor}
 Let $G$ be a geometrically connected affine commutative algebraic group over $k$ of dimension $1$. Let $\PPP\in\Perv(G\otimes k_r,\QQ)$ be pure of weight $1$ without negligible components, and let ${\mathcal Q}$ be the direct sum of $\sigma^\star\PPP$ for $\sigma\in\Gal(k_r/k)$ (and also its descent to $G_k$). Then for every $t\in U_{{\mathcal Q},r}(k)$ we have the estimate
$$
|f^{\Nm,r}_\PPP(k,t)|\leq \frac{C_{{\mathcal Q},r}}{r}q^{\frac{r-1}{2}}.
$$ 
\end{cor}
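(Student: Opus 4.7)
The plan is to reduce directly to Corollary \ref{UC} applied to the descended object $\mathcal Q$ on $G_k$. First I would check that $\mathcal Q$ satisfies the hypotheses needed: geometrically, $\mathcal Q\otimes\bar k$ coincides with $\bigoplus_\sigma\sigma^\star\PPP\otimes\bar k$, whose irreducible components are precisely the geometric irreducible components of the various $\sigma^\star\PPP$. Since $\PPP$ is assumed pure of weight $1$ without negligible components, the same holds for $\mathcal Q$. Consequently the extension of Corollary \ref{UC} to non-irreducible perverse sheaves (noted in the paragraph after its proof, by additivity of the trace function along the irreducible constituents) gives, for every $t\in U_{\mathcal Q,r}(k)$, the estimate
$$
|f^{\Nm,r}_{\mathcal Q}(k,t)|\leq C_{\mathcal Q,r}\,q^{(r-1)/2}.
$$

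Next I would compare $f^{\Nm,r}_{\mathcal Q}$ and $f^{\Nm,r}_{\PPP}$ on $G(k)$. The previous proposition, applied to $\FF=\PPP$, gives
$$
L^{\Nm,r}(\mathcal Q,k,t;T)=\prod_{\sigma\in\Gal(k_r/k)}L^{\Nm,r}(\sigma^\star\PPP,k,t;T)=L^{\Nm,r}(\PPP,k,t;T)^{r}
$$
for every $t\in G(k)$, where the first equality uses additivity of $L^{\Nm,r}$ over direct sums (and the fact that the trace function of the descent $\mathcal Q$ agrees, at points of $G(k_{sr})$, with that of $\pi_r^\star\mathcal Q=\bigoplus_\sigma\sigma^\star\PPP$, since $k_{sr}\supseteq k_r$), and the second equality is the Galois-invariance identity established in that proof. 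Comparing the coefficients of $T^{s}/s$ in $\log$ of both sides yields $f^{\Nm,r}_{\mathcal Q}(k_s,t)=r\cdot f^{\Nm,r}_{\PPP}(k_s,t)$ for all $s\geq 1$, and in particular for $s=1$.

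Combining the two steps, for $t\in U_{\mathcal Q,r}(k)$ we get
$$
r\cdot|f^{\Nm,r}_{\PPP}(k,t)|=|f^{\Nm,r}_{\mathcal Q}(k,t)|\leq C_{\mathcal Q,r}\,q^{(r-1)/2},
$$
which is the claimed inequality after dividing by $r$.

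The only real subtlety, and the step I would check most carefully, is the identification of the trace function of $\mathcal Q$ at points of $G(k_{sr})$ with that of the upstairs object $\bigoplus_\sigma\sigma^\star\PPP$: this is where one uses that $k_{sr}$ contains $k_r$, so that the relevant Frobenius on a geometric stalk of $\mathcal Q$ acts through an element already present in the decomposition group of $k_r$. Everything else — purity of weight $1$, absence of negligible components, and the applicability of the non-irreducible version of Corollary \ref{UC} — then follows formally from the Galois-stability of the construction.
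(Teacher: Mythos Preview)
Your proposal is correct and follows essentially the same approach as the paper: the paper itself says the corollary ``is a direct consequence of corollary \ref{UC} and the previous proposition,'' invoking precisely the identity $f_{\mathcal Q}^{\Nm,r}(k,t)=r\cdot f_\PPP^{\Nm,r}(k,t)$ noted just before it. One minor remark: the extension of Corollary \ref{UC} to non-irreducible $\PPP$ is not literally ``by additivity of the trace function'' (the individual $\R(\ast\wedge^i\rho)$ are not additive), but rather by running the same proof, since Proposition 5.2 still applies to any perverse $\PPP$ of weight $1$ with no negligible irreducible components---this is exactly what the paper asserts in the sentence following the proof of Corollary \ref{UC}.
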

which is a direct consequence of corollary \ref{UC} and the previous proposition.

\bibliographystyle{amsplain}
\bibliography{bibliography}

\end{document}